\newtheorem{thm}{Theorem}
\newtheorem{thmm}{Theorem}
\newtheorem{defn}{Definition}
\newtheorem{prop}{Proposition}
\newtheorem{cor}{Corollary}
\newcommand{\N}{\mathbb{N}}
\newcommand{\Z}{\mathbb{Z}}
\newcommand{\R}{\mathbb{R}}
\newcommand{\Q}{\mathbb{Q}}
\newcommand{\T}{\mathbb{T}}
\newcommand{\D}{\mathbb{D}}
\newcommand{\funct}[2]{#1 \longrightarrow #2}
\newcommand{\m}[1]{\textbf{#1}}
\newcommand{\mc}[1]{\widetilde{\textbf{#1}}}
\newcommand{\Aut}{\mathrm{Aut}}
\newcommand{\Stab}{\mathrm{Stab}}
\newcommand{\Age}{\mathrm{Age}}
\newcommand{\Ty}{\overrightarrow{\mathbb{T}}}
\newcommand{\Cy}{\textbf{S}(2)}
\newcommand{\Cz}{\textbf{S}(3)}
\newcommand{\Dz}{\overrightarrow{\mathbb{D}}}
\newcommand{\LO}{\mathrm{LO}}
\newcommand{\arrows}[3]{\longrightarrow {#1}^{#2}_{#3}}
\newcommand{\restrict}[2]{#1\mathbin{\upharpoonright} #2}
\newcommand{\la}[1]{\stackrel{#1}{\longleftarrow}}
\author{L. Nguyen Van Th\'e}
\address{Laboratoire d'Analyse, Topologie et Probabilit\'es, Universit\'e d'Aix-Marseille, B\^{a}timent Henri Poincar\'e, 
Cour A, Avenue de l'escadrille Normandie-Ni\'emen, 13397 Marseille Cedex 20, France}
\email{lionel@latp.univ-mrs.fr}
\title{More on the Kechris-Pestov-Todorcevic correspondence: Precompact expansions}
\subjclass[2010]{Primary: 37B05; Secondary: 03C15 03E02 03E15 05D10 22F50 43A07 54H20}
\keywords{Extreme amenability, Fra\"iss\'e theory, Ramsey theory, universal minimal flow}
\date{March 2012}
\begin{document}

\begin{abstract}
In 2005, the paper \cite{KPT} by Kechris, Pestov and Todorcevic provided a powerful tool to compute an invariant of topological groups known as the \emph{universal minimal flow}. This immediately led to an explicit representation of this invariant in many concrete cases. However, in some particular situations, the framework of \cite{KPT} does not allow to perform the computation directly, but only after a slight modification of the original argument. The purpose of the present paper is to supplement \cite{KPT} in order to avoid that twist and to make it adapted for further applications.
\end{abstract}

\maketitle

\section{Introduction}

The article \cite{KPT}, published in 2005 by Kechris, Pestov and Todorcevic, established a surprising correspondance between structural Ramsey theory and topological dynamics. As an immediate consequence, it triggered a new interest for structural Ramsey theory, as witnessed by the papers \cite{Ne} by Ne\v set\v ril, \cite{NVT1} by the present author, \cite{So0}, \cite{So1}, \cite{So2} by Soki\'c, and \cite{J} by Jasi\'nski. More recently, it also motivated a more detailed investigation of the connection between combinatorics and Polish group actions, an aspect which is visible in, for example, \cite{B} by Barto\v sov\'a, \cite{BP} by Bodirsky-Pinsker, \cite{MT} by Melleray-Tsankov, and \cite{M} by Moore. 
%Of course, it is not the first time that the idea of relating combinatorial behavior of classes of finite objects and group theoretic properties of Polish groups (in particular, closed subgroups of $S_{\infty}$, the permutation group of $\N$ equipped with the pointwise convergence topology) turns out to be fruitful, and there is a lot of beautiful results building on the duality between automorphism groups of countable first-order structures and closed subgroups of $S_{\infty}$. It is, however, probably fair to say that \cite{KPT} corresponds to a particularly successful instance. 
Precisely, \cite{KPT} provided an extremely powerful tool to compute an invariant known as the \emph{universal minimal flow}, and immediately led to an explicit representation of this invariant in many concrete cases. However, in some particular situations, the framework of \cite{KPT} does not allow to perform the computation directly, but only after a slight modification of the original argument. The purpose of the present paper is to supplement \cite{KPT} in order to avoid that twist and to make it adapted for further applications.

In order to describe precisely in which sense \cite{KPT} is generalized, we proceed to a synthetic overview of some of the main results it contains. Our main reference here is \cite{KPT} itself. In what follows, $\N$ denotes the set $\{ 0, 1, 2, \ldots\}$ of natural numbers, and for a natural number $m$, $[m]$ will denote the set $\{ 0, \ldots, m-1\}$. We will assume that the reader is familiar with the concepts of first order logic, first order structures, Fra\"iss\'e theory (cf \cite{KPT}, section 2), reducts and expansions (cf \cite{KPT}, section 5). If $L$ is a first order signature and $\m A$ and $\m B$ are $L$-structures, we will write $\m A\leq \m B$ when $\m A$ embeds in $\m B$, $\m A \subset \m B$ when $\m A$ is a substructure of $\m B$, and $\m A \cong \m B$ when $\m A$ and $\m B$ are isomorphic. If $C$ is a subset of the universe of $\m A$ which supports a substructure of $\m A$, we will write $\restrict{\m A}{C}$ for the corresponding substructure.

A \emph{Fra\"iss\'e class} in a countable first order language $L$ will be a countable class of finite $L$-structures of arbitrarily large sizes, satisfying the hereditarity, joint embedding and amalgamation property, and a \emph{Fra\"iss\'e structure} (or \emph{Fra\"iss\'e limit}) in $L$ will be a countable, locally finite, ultrahomogeneous $L$-structure. In \cite{KPT}, two combinatorial properties of classes of finite structures have a considerable importance. Those are the \emph{Ramsey property} and the \emph{ordering property}. 

In order to define the Ramsey property, let $k, l\in \N$, and $\m A, \m B, \m C$ be $L$-structures. The set of all \emph{copies} of $\m A$ in $\m B$ is written using the binomial notation $$ \binom{\m B}{\m A} = \{ \mc A \subset \m B : \mc A \cong \m A\}.$$ 

We use the standard arrow partition symbol $$ \m C \arrows{(\m B)}{\m A}{k, l}$$ to mean that for every map $c: \funct{\binom{\m C}{\m A}}{[k]}$, thought as a $k$-coloring of the copies of $\m A$ in $\m C$, there is $\mc B \in \binom{\m C}{\m B}$ such that $c$ takes at most $l$-many values on $\binom{\mc B}{\m A}$. When $l=1$, this is written  $$ \m C \arrows{(\m B)}{\m A}{k}.$$ 

A class $\mathcal{K}$ of finite $L$-structures is then said to have the \emph{Ramsey property} when $$ \forall k \in \N \quad \forall \m A, \m B \in \mathcal{K} \quad \exists \m C \in \mathcal{K} \quad \m C \arrows{(\m B)}{\m A}{k}.$$

When $\mathcal{K} = \Age (\m F)$, where $\m F$ is a Fra\"iss\'e structure, this is equivalent, via a compactness argument (detailed in Proposition \ref{prop:RPcompact}), to: $$ \forall k \in \N \quad \forall \m A, \m B \in \mathcal{K} \quad \m F \arrows{(\m B)}{\m A}{k}.$$

As for the ordering property, assume that $<$ is a binary relation symbol not contained in $L$, and that $L^{*}=L\cup\{<\}$. Let $\mathcal{K}$ be a Fra\"iss\'e class in $L$, and $\mathcal{K}^{*}$ an \emph{order expansion} of $\mathcal K$ in $L^{*}$. That means that all elements of $\mathcal{K}^{*}$ are of the form $\m A ^{*} = (\m A, <^{\m A})$, where $\m A \in \mathcal{K}$ and $<^{\m A}$ is a linear ordering on the universe $A$ of $\m A$ ($\m A$ is then the \emph{reduct} of $\m A^{*}$ to $L$ and is denoted $ \restrict{\m A^{*}}{L}$), and that, conversely, any $\m A \in \mathcal K$ admits a linear ordering $<^{\m A}$ so that $(\m A, <^{\m A}) \in \Age(\m F^{*})$. Then, $\mathcal K ^{*}$ satisfies the \emph{ordering property} relative to $\mathcal K$ if, for every $ \m A \in \mathcal{K}$, there exists $ \m B \in \mathcal{K}$ such that $$\forall \m A^{*}, \m B^{*} \in\mathcal{K} ^{*} \quad (\restrict{\m A^{*}}{L} = \m A \ \  \wedge \ \ \restrict{\m B^{*}}{L} = \m B) \Rightarrow \m A^{*} \leq \m B^{*}. $$

Note that previously, the restriction symbol was also used to refer to substructures as opposed to reducts. Because the context almost always prevents the confusion between those two notations, we will use freely both of them, without any further indication.

We now turn to topological groups and to dynamical properties of their actions. Let $G$ be a topological group. A \emph{$G$-flow} is a continuous action of $G$ on a topological space $X$. We will often use the notation $G \curvearrowright X$. The flow $G \curvearrowright X$ is \emph{compact} when the space $X$ is. It is \emph{minimal} when every $x \in X$ has dense orbit in $X$: \[ \forall x \in X \  \  \overline{G\cdot x} = X\] 

Finally, it is \emph{universal} when every compact minimal $G \curvearrowright Y$ is a factor of $G \curvearrowright X$, which means that there exists $\pi : X \longrightarrow Y$ continuous, onto, and so that $$\forall g \in G \quad \forall x \in X \quad  \pi (g \cdot x) = g \cdot \pi(x).$$ 

It turns out that when $G$ is Hausdorff, there is, up to isomorphism of $G$-flows, a unique $G$-flow that is both minimal and universal. This flow is called \emph{the universal minimal flow} of $G$, and is denoted $G \curvearrowright M(G)$. When the space $M(G)$ is reduced to a singleton, the group $G$ is said to be \emph{extremely amenable}. Equivalently, every compact $G$-flow $G\curvearrowright$ admits a fixed point, i.e. an element $x\in X$ so that $g\cdot x = x$ for every $g\in G$. We refer to \cite{KPT} or \cite{Pe} for a detailed account on those topics. Let us simply mention that, concerning extreme amenability, it took a long time before even proving that such groups exist, but that several non-locally compact transformation groups are now known to be extremely amenable (the most remarkable ones being probably the isometry groups of the separable infinite dimensional Hilbert space (Gromov-Milman, \cite{GM}), and of the Urysohn space (Pestov, \cite{Pe0})). As for universal minimal flows, prior to \cite{KPT}, only a few cases were known to be both metrizable and non-trivial, the most important examples being provided by the orientation-preserving homeomorphisms of the circle (Pestov, \cite{Pe1}), $S_{\infty}$ (Glasner-Weiss, \cite{GW1}), and the homeomorphism group of the Cantor space (Glasner-Weiss, \cite{GW2}). In that context, the paper \cite{KPT} established a link between Ramsey property and extreme amenability. For an $L$-structure $\m A$, we denote by $\Aut (\m A)$ the corresponding automorphism group. When this group is trivial, we say that $\m A$ is \emph{rigid}.

\begin{thm}[Kechris-Pestov-Todorcevic, \cite{KPT}, essentially Theorem 4.8]

\label{thm:EARP}

Let $\m F$ be a Fra\"{i}ss\'e structure, and let $G = \Aut(\m F)$. The following are equivalent: 

\begin{enumerate}
\item[i)] The group $G$ is extremely amenable. 

\item[ii)] The class $\Age (\m F)$ has the Ramsey property and consists of rigid elements.  
\end{enumerate}

\end{thm}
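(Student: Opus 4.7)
My plan is to prove both directions by translating dynamical statements about compact $G$-flows into combinatorial statements about colorings of $\binom{\m F}{\m A}$, using throughout the ultrahomogeneity of $\m F$ and the compactness-equivalent form of the Ramsey property stated above (and proved in Proposition~\ref{prop:RPcompact}).

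For (i) $\Rightarrow$ (ii), I would first derive rigidity by letting $G$ act continuously on the compact space $\LO(F) \subset 2^{F \times F}$ of linear orderings on the universe of $\m F$. Extreme amenability furnishes a $G$-invariant ordering $<^*$; given $\m A \in \Age(\m F)$ and $\sigma \in \Aut(\m A)$, ultrahomogeneity extends $\sigma$ to some $\hat\sigma \in G$ preserving $<^*$, so $\sigma$ preserves the finite linear order $\restrict{<^*}{A}$ and must be the identity. For the Ramsey property, fix $\m A, \m B \in \Age(\m F)$ and $k \in \N$; given $c \in [k]^{\binom{\m F}{\m A}}$, I would apply extreme amenability to the compact $G$-invariant subspace $\overline{G \cdot c}$ to obtain a fixed point $c^*$. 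Ultrahomogeneity makes $G$ transitive on $\binom{\m F}{\m A}$, so $c^*$ is constant, say with value $i$; since $c^* \in \overline{G \cdot c}$, for any fixed $\m B_0 \in \binom{\m F}{\m B}$ there is $g \in G$ with $g \cdot c \equiv i$ on the finite set $\binom{\m B_0}{\m A}$, which unwinds to $c$ being constant on $\binom{g^{-1} \m B_0}{\m A}$, yielding $\m F \arrows{(\m B)}{\m A}{k}$.

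For (ii) $\Rightarrow$ (i), my strategy is to verify Pestov's criterion for extreme amenability: given a right-uniformly continuous $f : G \to [0,1]$, a finite set $\{t_1, \ldots, t_m\} \subset G$ with $t_1 = e$, and $\epsilon > 0$, I must produce $g \in G$ with $|f(t_i g) - f(g)| < \epsilon$ for all $i$. Right-uniform continuity supplies a finite $\m A \subset \m F$ such that $|f(hg') - f(g')| < \epsilon/3$ for every $g' \in G$ and every $h \in \Stab(\m A)$, so $f$ is approximately constant on the cosets $\Stab(\m A) g$. These cosets are in bijection with the embeddings $\restrict{g^{-1}}{\m A} : \m A \hookrightarrow \m F$, and rigidity lets me identify each embedding with its image $g^{-1}(\m A) \in \binom{\m F}{\m A}$; partitioning $[0,1]$ into $K$ intervals of length $\epsilon/3$ then yields a coloring $\chi : \binom{\m F}{\m A} \to [K]$ such that two copies with the same $\chi$-color correspond (via the two $\epsilon/3$-approximations to $f$, plus $\epsilon/3$ within each bucket) to $f$-values within $\epsilon$ of one another. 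Let $\m B \subset \m F$ be a finite substructure containing all the copies $\m A_i := t_i^{-1}(\m A)$ of $\m A$; applying $\m F \arrows{(\m B)}{\m A}{K}$ to $\chi$ produces $\mc B \in \binom{\m F}{\m B}$ monochromatic for $\chi$. Ultrahomogeneity then yields $g \in G$ with $g^{-1}$ restricting to an isomorphism $\m B \to \mc B$, so that each $g^{-1}(\m A_i) = (t_i g)^{-1}(\m A)$ lies in $\binom{\mc B}{\m A}$ and receives the same $\chi$-color; specializing to each pair $(\m A_i, \m A)$ delivers the Pestov inequality.

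The main obstacle is the left/right bookkeeping matching uniform structures on $G$ with copies in $\binom{\m F}{\m A}$: the RUC function $f$ arising from a $G$-flow is approximately constant on cosets $\Stab(\m A) g$, which are naturally labelled by $\restrict{g^{-1}}{\m A}$ rather than by $\restrict{g}{\m A}$, and it is precisely the rigidity of finite substructures that makes the assignment from embeddings of $\m A$ into $\m F$ to elements of $\binom{\m F}{\m A}$ a bijection, allowing the discretization of $f$ to descend to a well-defined coloring of copies on which the Ramsey property can act.
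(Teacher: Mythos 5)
Your proposal is correct and follows essentially the same route as the paper: direction (i) $\Rightarrow$ (ii) is identical (fixed point in $\LO(\m F)$ for rigidity, fixed point in $\overline{G\cdot c}$ for Ramsey), and direction (ii) $\Rightarrow$ (i) is the same discretize-and-color argument, the only differences being that you work with right-uniformly continuous functions and right cosets $\Stab(A)g$ (labelled by $\restrict{g^{-1}}{A}$) where the paper uses left-uniformly continuous functions and left cosets, and that you invoke Pestov's oscillation-stability criterion as a black box where the paper rederives it via the finite intersection property of the sets $A_{\phi,\varepsilon,F}$. Both of these are cosmetic; your left/right bookkeeping and the use of rigidity to pass from embeddings to copies are exactly right.
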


Because closed subgroups of $S_{\infty}$ are all of the form $\Aut(\m F)$, where $\m F$ is a Fra\"{i}ss\'e structure, the previous theorem actually completely characterizes those closed subgroups of $S_{\infty}$ that are extremely amenable. It also allows the description of many universal minimal flows via combinatorial methods. Indeed, when $\m F^{*} = (\m F, <^{*})$ is an order expansion of $\m F$, one can consider the space $\LO(\m F)$ of all linear orderings on $\m F$, seen as a subspace of $[2]^{\m F\times \m F}$. In this notation, the factor $[2]^{\m{F}\times \m F} = \{ 0, 1\}^{\m{F}\times \m F}$ is thought as the set of all binary relations on $\m F$. This latter space is compact, and $G$ continuously acts on it: if $S\in [2]^{\m{F}\times \m F}$ and $g\in G$, then $g\cdot S$ is defined by $$\forall x, y \in \m F \quad g\cdot S(x, y) \Leftrightarrow S(g^{-1}(x), g^{-1}(y)).$$ 

It can easily be seen that $\LO (\m F)$ and $X^{*} : = \overline{G \cdot <^{*}}$ are closed $G$-invariant subspaces.

\begin{thm}[Kechris-Pestov-Todorcevic, \cite{KPT}, Theorem 7.4]

\label{thm:OP}

Let $\m F$ be a Fra\" iss\'e structure, and $\m F^{*}$ a Fra\"iss\'e order expansion of $\m F$. The following are equivalent: 

\begin{enumerate}

\item[i)] The flow $G\curvearrowright X^{*}$ is minimal. 

\item[ii)] $\Age(\m F^{*})$ has the ordering property relative to $\Age(\m F)$. 

\end{enumerate}

\end{thm}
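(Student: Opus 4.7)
I will prove both implications, relying throughout on the standard description that $< \in X^{*}$ iff every finite substructure of $(\m F, <)$ belongs to $\Age(\m F^{*})$.

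For (ii) $\Rightarrow$ (i), I show that every orbit $G \cdot {<}$ with $< \in X^{*}$ is dense in $X^{*}$. Given $<' \in X^{*}$ and a finite $C \subseteq F$, put $\m A := \restrict{\m F}{C}$ and $\m A^{*} := (\m A, \restrict{<'}{C}) \in \Age(\m F^{*})$. The ordering property supplies some $\m B \in \Age(\m F)$ associated to $\m A$; after fixing a copy of $\m B$ in $\m F$ with universe $B$, the expansion $\m B^{*} := (\restrict{\m F}{B}, \restrict{<}{B})$ lies in $\Age(\m F^{*})$ because $< \in X^{*}$. The ordering property then gives an $L^{*}$-embedding $\phi : \m A^{*} \hookrightarrow \m B^{*} \subseteq (\m F, <)$, which is in particular a partial $L$-isomorphism of $\m F$; ultrahomogeneity extends $\phi$ to some $h \in G$, and $g := h^{-1}$ satisfies $\restrict{(g \cdot <)}{C} = \restrict{<'}{C}$, witnessing $<' \in \overline{G \cdot <}$.

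For (i) $\Rightarrow$ (ii), fix $\m A \in \Age(\m F)$ and let $\m A^{*}_{1}, \dots, \m A^{*}_{n}$ be the finitely many expansions of $\m A$ belonging to $\Age(\m F^{*})$. Each set $U_{i} := \{ < \in \LO(\m F) : \m A^{*}_{i} \leq (\m F, <)\}$ is open, $G$-invariant, and contains $<^{*}$, so $X^{*} \setminus U_{i}$ is a proper closed $G$-invariant subset of $X^{*}$, and minimality forces it to be empty. Hence every $< \in X^{*}$ embeds every $\m A^{*}_{i}$ into $(\m F, <)$. To convert this into the finite witness $\m B$ demanded by the ordering property, I write both $X^{*}$ and $\LO(\m F) \setminus U_{i}$ as decreasing intersections of clopen sets indexed by finite $D \subseteq F$ (determined by the restriction to $D$), combine them into a single decreasing family of clopens whose intersection is empty, and invoke the compactness of $\LO(\m F)$ to produce a finite $D_{i} \subseteq F$ such that every linear order on $D_{i}$ turning $\restrict{\m F}{D_{i}}$ into an element of $\Age(\m F^{*})$ already embeds $\m A^{*}_{i}$. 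Setting $\m B := \restrict{\m F}{\bigcup_{i} D_{i}}$ and appealing to hereditariness of $\Age(\m F^{*})$ to restrict expansions of $\m B$ to each $D_{i}$, one concludes that every expansion of $\m B$ in $\Age(\m F^{*})$ embeds every $\m A^{*}_{i}$, which is the ordering property.

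The main difficulty is the compactness step in the second implication: one has to merge the two nested clopen families expressing $X^{*}$ and the complement of $U_{i}$ into a single decreasing family before the compactness of $\LO(\m F)$ yields a finite witness. Once this reduction is set up, the remaining arguments are routine manipulations of finite $L^{*}$-expansions and a single appeal to the ultrahomogeneity of $\m F$.
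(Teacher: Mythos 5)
Your proof is correct and follows essentially the same route as the paper's proof of the more general Theorem \ref{thm:EP}: both directions rest on the characterization of $X^{*}$ as the set of orderings $<$ with $\Age(\m F,<)\subset\Age(\m F^{*})$ (Proposition \ref{prop:age}), followed in $(i)\Rightarrow(ii)$ by a compactness argument extracting the finite witness $\m B$ --- you phrase it via the finite intersection property of a directed family of clopen subsets of $\LO(\m F)$, the paper via a finite open cover of $X^{*}$, which amounts to the same thing. The only point to adjust is that when $L$ contains function symbols you should take the finite substructure of $\m F$ generated by $\bigcup_{i} D_{i}$ rather than $\restrict{\m F}{\bigcup_{i} D_{i}}$ (and likewise choose the index sets $D$ to support substructures, which is possible by local finiteness), exactly as the paper does.
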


The following result, which builds on the two preceeding theorems, is then obtained: 

\begin{thm}[Kechris-Pestov-Todorcevic, \cite{KPT}, Theorem 10.8]

\label{thm:KPTUMF}

Let $\m F$ be a Fra\" iss\'e structure, and $\m F^{*}$ be a Fra\"iss\'e order expansion of $\m F$. The following are equivalent: 

\begin{enumerate}

\item[i)] The flow $G\curvearrowright X^{*}$ is the universal minimal flow of $G$. 

\item[ii)] The class $\Age(\m F^{*})$ has the Ramsey property as well as the ordering property relative to $\Age(\m F)$. 

\end{enumerate}

\end{thm}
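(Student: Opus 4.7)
The strategy is to combine the two preceding theorems via the closed subgroup $H := \Aut(\m F^{*}) = \Stab_{G}(<^{*})$, exploiting the fact that the orbit $G \cdot <^{*} \subseteq X^{*}$ is $G$-equivariantly homeomorphic to $G/H$ and dense in $X^{*}$.

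For the direction (ii) $\Rightarrow$ (i): since linearly ordered structures are automatically rigid, $\Age(\m F^{*})$ consists of rigid elements, and together with the Ramsey property this allows Theorem \ref{thm:EARP} (applied to $\m F^{*}$) to yield that $H$ is extremely amenable. The ordering property, combined with Theorem \ref{thm:OP}, gives minimality of $G \curvearrowright X^{*}$. To conclude universality I would invoke the standard principle that if $H$ is a closed extremely amenable subgroup of $G$ and the completion of $G/H$ for the right-quotient uniformity is already a minimal $G$-flow, then it coincides with $M(G)$. Concretely, given any compact minimal $G$-flow $G \curvearrowright Y$, extreme amenability supplies a fixed point $y_{0} \in Y$ of $H$; the assignment $g \cdot <^{*} \mapsto g \cdot y_{0}$ is well-defined (as $H$ fixes $y_{0}$), $G$-equivariant on the dense orbit $G \cdot <^{*}$, and right-uniformly continuous by joint continuity of the action $G \curvearrowright Y$. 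It therefore extends to a continuous $G$-equivariant map $\overline{\phi} : X^{*} \to Y$, which is surjective by minimality of $Y$.

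For the direction (i) $\Rightarrow$ (ii): minimality of $X^{*}$ immediately yields the ordering property via Theorem \ref{thm:OP}. To obtain the Ramsey property, the converse direction of Theorem \ref{thm:EARP} reduces the task to showing that $H$ is extremely amenable. Given an arbitrary compact $H$-flow $H \curvearrowright Z$, the plan is to build an induced $G$-flow, pass to a compactification carrying a natural $G$-equivariant projection down to $X^{*} = M(G)$, with $Z$ sitting inside the fiber over $<^{*}$. Any minimal subflow of the induced flow then maps onto $X^{*}$ by universality of $M(G)$, and tracking a point of the fiber above $<^{*}$ (a compact $H$-invariant set, since $H = \Stab_{G}(<^{*})$) back into $Z$ produces the desired $H$-fixed point.

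The main obstacle is this backward direction: performing the induction from $H$-flows to $G$-flows in a non-locally-compact setting requires care to guarantee compactness of the resulting $G$-flow and the correct identification of its fibers. In the forward direction, the delicate step is the uniform-continuity verification needed to extend $\phi$ continuously from the dense orbit $G \cdot <^{*}$ to all of $X^{*}$; this rests on the observation that the natural uniformity on $X^{*}$ inherited from $[2]^{F \times F}$ agrees, on the orbit, with the right-quotient uniformity on $G/H$.
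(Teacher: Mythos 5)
Your direction $ii)\Rightarrow i)$ is essentially the paper's own argument: rigidity is automatic for order expansions, Theorem \ref{thm:EARP} applied to $\m F^{*}$ gives extreme amenability of $H=G^{*}=\Aut(\m F^{*})$, Theorem \ref{thm:OP} gives minimality, and universality is obtained exactly as you describe, by sending $g\cdot <^{*}$ to $g\cdot y_{0}$ for an $H$-fixed point $y_{0}$ of an arbitrary compact minimal $G$-flow and extending by right-uniform continuity to the completion $X^{*}=\widehat{G/G^{*}}$; the identification of the metric uniformity on the orbit with the right-quotient uniformity, which you correctly flag as the delicate point, is Proposition \ref{prop:unif}. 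That half is sound.

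The gap is in $i)\Rightarrow ii)$, and it is the one you flag yourself without resolving. Reducing the Ramsey property of $\Age(\m F^{*})$ to extreme amenability of $H$ via Theorem \ref{thm:EARP} is legitimate, but you then must produce an $H$-fixed point in an arbitrary compact $H$-flow $Z$, and the induction step you propose does not exist off the shelf: for non-locally-compact $G$ the induced space $G\times_{H}Z$ is not compact, and there is no canonical compact $G$-flow fibering over $X^{*}$ with fiber $Z$ over $<^{*}$. Moreover, even granting such a flow $Y\to X^{*}$, your ``tracking back'' only yields a nonempty compact $H$-invariant subset of the fiber (the intersection of a minimal subflow of $Y$ with the fiber over $<^{*}$), not a fixed point; extracting a fixed point requires additional input such as coalescence of $M(G)$ and an identification of the greatest $H$-ambit inside the greatest $G$-ambit. (A correct argument of this kind exists --- it is essentially the later result of Melleray--Nguyen Van Th\'e--Tsankov and Zucker that $M(G)=\widehat{G/H}$ forces $H$ to be extremely amenable --- but it is genuinely nontrivial and you have not supplied it.) The paper avoids all of this: from $i)$ it only ever maps $X^{*}$ onto minimal subflows of the coloring spaces $[k]^{\binom{\m F}{\m A}}$, which are already $G$-flows; the image of the basepoint $[e]$ under such a map is a $G^{*}$-invariant coloring, which bounds the Ramsey degree of each $\m A\in\Age(\m F)$ by the number $t(\m A)$ of its expansions in $\Age(\m F^{*})$; and a purely combinatorial lemma (Proposition \ref{prop:Rdeg}) using the ordering property converts that bound into the Ramsey property of $\Age(\m F^{*})$. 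You should either adopt that route or supply the full extreme-amenability-of-$H$ argument.
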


A direct application of those results allowed to find a wealth of extremely amenable groups and of universal minimal flows, see (\cite{KPT}, Sections 6 and 8), but also \cite{Ne}, \cite{NVT1}, \cite{So1}, \cite{So2} and \cite{J}. However, some cases, which are very close to those described above, \emph{cannot} be captured directly by those theorems. Precisely, some Fra\"iss\'e classes do not have an order expansion with the Ramsey and the ordering property, but do so when the language is enriched with  additional symbols. Some examples already appear in \cite{KPT} (e.g. Theorem 8.4 dealing with equivalence relations with the number of classes bounded by a fixed number).  It is also the case for equivalence relations whose classes have a size bounded by a fixed number, for the subtournaments of the dense local order (See \cite{LNS}, or Section \ref{section:Cy} of the present paper), as well as for several classes of finite posets (see \cite{So2}). More recently, Jasi\'nski showed that boron tree structures have the same property, see \cite{J}. For all those cases, a slight modification of the original framework does allow to describe the universal minimal flow. The purpose of the present paper is to make this method explicit and to illustrate how it can be applied in concrete situations. 

Precisely, we will not deal with order expansions in the language $L^{*}=L\cup \{<\}$ only (those will be later on referred to as \emph{pure} order expansions), but with \emph{precompact relational expansions}. For such expansions, we do not require $L^{*}=L\cup \{<\}$, but only $L^{*} = L\cup \{ R_{i}:i\in I\}$, where $I$ is countable, and every symbol $R_{i}$ is relational and not in $L$. An expansion $\m F^{*}$ of $\m F$ is then called \emph{precompact} when any $\m A \in \Age(\m F)$ only has finitely many expansions in $\Age(\m F^{*})$. Note that every $\m A \in \Age(\m F)$ has at least one expansion in $\Age(\m F ^{*})$: simply take a copy of $\m A$ in $\m F$, and consider the substructure of $\m F^{*}$ that it supports. The choice of the terminology is justified in Section \ref{section:precompact}. For those expansions, the ordering property has a direct translation, which we call the \emph{expansion property}, and Theorems \ref{thm:OP} and \ref{thm:KPTUMF} turn into the following versions: 

\begin{thm}

\label{thm:EP}

Let $\m F$ be a Fra\" iss\'e structure, and $\m F^{*}$ a precompact relational expansion of $\m F$ (not necessarily Fra\"iss\'e). The following are equivalent: 

\begin{enumerate}

\item[i)] The flow $G\curvearrowright X^{*}$ is minimal. 

\item[ii)] $\Age(\m F^{*})$ has the expansion property relative to $\Age(\m F)$. 

\end{enumerate}

\end{thm}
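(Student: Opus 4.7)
The plan is to closely adapt the pure-order proof of Theorem \ref{thm:OP} from \cite{KPT}, replacing linear orderings by finite tuples of new relations. I will prove the two implications separately, relying on one preliminary observation.

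The preliminary step is to show that for every finite $\m B \subseteq \m F$, the set of restrictions $\{T\restrict \m B : T\in X^{*}\}$ coincides exactly with the set of expansions of $\m B$ lying in $\Age(\m F^{*})$. One direction is immediate from $X^{*}=\overline{G\cdot\m F^{*}}$ and the fact that $X^{*}$ is contained in the finite product of clopen cylinders corresponding to expansions in $\Age(\m F^{*})$ (this is where precompactness is used to see that the indexed finite set of possible restrictions is topologically separated). The other direction uses the ultrahomogeneity of $\m F$: given any expansion $\m B^{*}$ of $\m F\restrict \m B$ that belongs to $\Age(\m F^{*})$, embed $\m B^{*}$ into $\m F^{*}$, and extend the resulting $L$-isomorphism from $\m B$ to its image inside $\m F$ to an element $g\in G$; then $g^{-1}\cdot \m F^{*}$ restricts to $\m B^{*}$ on $\m B$.

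For $(ii)\Rightarrow(i)$, I will take $S\in X^{*}$ arbitrary and show $\m F^{*}\in\overline{G\cdot S}$, which suffices since the closure of $G\cdot \m F^{*}$ is $X^{*}$. A basic clopen neighbourhood of $\m F^{*}$ in $X^{*}$ is of the form $\{T : T\restrict \m A = \m A^{*}\}$ for some finite $\m A\subseteq \m F$ with $\m A^{*}:=\m F^{*}\restrict \m A$. Applying the expansion property to the $L$-structure $\m A$ yields some $\m B\in\Age(\m F)$; fixing a copy $\tilde{\m B}\subseteq \m F$, the preliminary observation ensures $S\restrict \tilde{\m B}$ is an expansion of $\m B$ inside $\Age(\m F^{*})$, and the expansion property produces an embedding of $\m A^{*}$ into $S\restrict \tilde{\m B}$. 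Ultrahomogeneity of $\m F$ then lets me move the $L$-reduct of the image back onto $\m A$ by some $g\in G$, so that $(g\cdot S)\restrict \m A=\m A^{*}$.

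For $(i)\Rightarrow(ii)$, I will run the standard compactness-and-covering argument. Given $\m A\in\Age(\m F)$, fix a copy $\tilde{\m A}\subseteq \m F$, enumerate the finitely many expansions $\m A^{*}_{1},\ldots,\m A^{*}_{n}$ of $\m A$ in $\Age(\m F^{*})$ (finiteness is exactly precompactness), and set $U_{i}=\{T\in X^{*}:T\restrict \tilde{\m A}=\m A^{*}_{i}\}$. Each $U_{i}$ is clopen, and the preliminary observation shows each is non-empty. By minimality and compactness of $X^{*}$, for every $i$ finitely many translates $g_{i,1}U_{i},\ldots,g_{i,m_{i}}U_{i}$ cover $X^{*}$. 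Taking $\m B$ to be the (finite) substructure of $\m F$ supported by $\bigcup_{i,j}g_{i,j}(\tilde{\m A})$, I claim $\m B$ witnesses the expansion property: any expansion $\m B^{*}$ of $\m B$ in $\Age(\m F^{*})$ is realised as $T\restrict \m B$ for some $T\in X^{*}$ by the preliminary observation, and then the covering property forces $T\restrict g_{i,j}(\tilde{\m A})$ to equal the $g_{i,j}$-translate of $\m A^{*}_{i}$ for some $j$, which produces the desired embedding $\m A^{*}_{i}\leq \m B^{*}$.

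The main delicate point is the bookkeeping between labeled finite substructures of $\m F$ and abstract isomorphism types of expansions: I must consistently distinguish restrictions of structures in $X^{*}$ onto a fixed finite subset of $\m F$ (which live in a finite discrete set thanks to precompactness) from abstract expansions in $\Age(\m F^{*})$, and invoke ultrahomogeneity of $\m F$ at each step where I want to identify them. Once this discipline is kept, both implications follow the familiar KPT template.
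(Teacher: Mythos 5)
Your proposal is correct and follows essentially the same route as the paper: the paper factors the argument through Proposition \ref{prop:age} (orbit closure corresponds to age containment) and Proposition \ref{prop:EPcompact} (a compactness/covering characterization of the expansion property on $X^{*}$), and your preliminary observation plus the two implications reproduce exactly these steps, with your cover by translates $g_{i,j}U_{i}$ playing the role of the paper's cover by the sets $X_{C}$. The only cosmetic caveat is that, since only $L^{*}\smallsetminus L$ is assumed relational, you should take the substructure \emph{generated} by $\bigcup_{i,j}g_{i,j}(\tilde{\m A})$ (as the paper does) rather than the one "supported" by it.
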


\begin{thm}

\label{thm:UMF}

Let $\m F$ be a Fra\" iss\'e structure, and $\m F^{*}$ be a Fra\"iss\'e precompact relational expansion of $\m F$. Assume that $\Age(\m F^{*})$ consists of rigid elements. The following are equivalent: 

\begin{enumerate}

\item[i)] The flow $G\curvearrowright X^{*}$ is the universal minimal flow of $G$. 

\item[ii)] The class $\Age(\m F^{*})$ has the Ramsey property as well as the expansion property relative to $\Age(\m F)$. 

\end{enumerate}

\end{thm}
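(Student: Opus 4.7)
The strategy is to reduce the theorem to the following equivalence: letting $H := \Aut(\m F^{*})$, a closed subgroup of $G$, and $p \in X^{*}$ the point encoding the relations $R_{i}^{\m F^{*}}$ (so that $\Stab_G(p) = H$ and $H$ fixes $p$), the flow $G \curvearrowright X^{*}$ is the UMF of $G$ if and only if $H$ is extremely amenable and $G \curvearrowright X^{*}$ is minimal. Since $\m F^{*}$ is Fra\"iss\'e, Theorem~\ref{thm:EARP} applied to $\m F^{*}$ converts extreme amenability of $H$ into the Ramsey property for $\Age(\m F^{*})$ (using the assumed rigidity), and Theorem~\ref{thm:EP} converts minimality of $G \curvearrowright X^{*}$ into the expansion property. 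Once this reduction is established, the theorem follows.

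For the direction ``extreme amenability of $H$ and minimality $\Rightarrow$ UMF'', let $Y$ be any compact minimal $G$-flow. Restricting the action to $H$, extreme amenability yields a fixed point $y_{0} \in Y$. I would define $\pi_{0} : G \cdot p \to Y$ by $g \cdot p \mapsto g \cdot y_{0}$, which is well defined (since $H = \Stab_G(p)$ fixes $y_{0}$) and $G$-equivariant. The main step is to verify uniform continuity of $\pi_{0}$, so that it extends continuously to a $G$-map $\pi : X^{*} \to Y$, which is then surjective by minimality of $Y$. Using $G$-invariance of the unique compatible uniformities on the compact spaces $X^{*}$ and $Y$, uniform continuity reduces to continuity at $p$; this I would establish by an application of ultrahomogeneity of $\m F^{*}$: given a finite $A \subset \m F$ with $\{g \in G : g|_{A} = \mathrm{id}\} \cdot y_{0}$ inside a prescribed neighborhood of $y_{0}$, the condition $(g \cdot p)|_{A} = p|_{A}$ means exactly that $g|_{g^{-1}(A)}$ is an isomorphism $\m F^{*}|_{g^{-1}(A)} \to \m F^{*}|_{A}$ of induced expansions; by ultrahomogeneity of $\m F^{*}$ it extends to some $\sigma \in H$, and then $g\sigma^{-1}$ fixes $A$ pointwise, so $g \cdot y_{0} = (g\sigma^{-1}) \cdot y_{0}$ lies in the prescribed neighborhood.

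For the reverse direction, minimality of the UMF is immediate. To show $H$ extremely amenable, the plan is an induced-flow argument: given a compact $H$-flow $Z$, construct a compact $G$-flow $\widetilde{Z} = G \times_{H} Z$, pick a minimal subflow $M \subseteq \widetilde{Z}$, and use universality of $X^{*} = M(G)$ to produce a $G$-map $X^{*} \to M$; the image of $p$ is $H$-fixed in $M$, and unwinding the bundle construction yields an $H$-fixed point in $Z$. The main obstacle I anticipate is precisely this induced-flow construction in the precompact relational setting: one needs $X^{*}$ to serve as a $G$-compactification of $G/H$ suitable for a bundle with fiber $Z$, which is exactly what the precompactness hypothesis should deliver (each finite substructure of $\m F$ has only finitely many expansion patterns in $\Age(\m F^{*})$, so the completion behaves well). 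Once that machinery is in place, both directions follow the template of \cite[Theorem 10.8]{KPT}, of which the present theorem is the precompact generalization.
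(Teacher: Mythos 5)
Your direction $ii)\Rightarrow i)$ is essentially the paper's argument: extreme amenability of $G^{*}=\Aut(\m F^{*})$ (Theorem \ref{thm:EARP}) gives a $G^{*}$-fixed point $\xi$ in any compact minimal $G$-flow $Y$, the map $g\mapsto g\cdot\xi$ factors through $G/G^{*}$ and is right-uniformly continuous, and since $X^{*}=\widehat{G/G^{*}}$ (Proposition \ref{prop:unif}, which is exactly your ultrahomogeneity computation) it extends to a surjective $G$-map $X^{*}\to Y$; minimality is Theorem \ref{thm:EP}. No problem there.

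The reverse direction has a genuine gap. You reduce $i)\Rightarrow ii)$ to ``$X^{*}=M(G)$ implies $H=G^{*}$ is extremely amenable,'' to be proved by inducing an arbitrary compact $H$-flow $Z$ up to a compact $G$-flow $G\times_{H}Z$. For a non-locally-compact Polish group the naive quotient of $G\times Z$ by $(gh,z)\sim(g,h\cdot z)$ is not compact (it fibers over $G/H$, which is only precompact, not compact), and compactifying it while keeping control of the fiber over $[e]$ is precisely the hard technical content you defer. Such an induced-flow construction does exist in the co-precompact setting and does yield your claimed equivalence, but it is substantially harder, later machinery (Zucker; Melleray--Nguyen Van Th\'e--Tsankov), not something precompactness delivers in a line. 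Moreover, even granting the construction, an $H$-fixed point of a minimal subflow $M\subseteq\widetilde Z$ need not lie in the fiber over $[e]$ --- $\widehat{G/H}$ may have other $H$-fixed points --- so the ``unwinding'' step also needs an argument.

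The paper avoids induction entirely: the only $G$-flows it needs are the coloring flows $\overline{G\cdot c}\subseteq[k]^{\binom{\m F}{\m A}}$, which are already $G$-flows. Universality of $X^{*}$ gives a $G$-map $\pi$ onto a minimal subflow of $\overline{G\cdot c}$; the image $\gamma=\pi([e])$ is $G^{*}$-fixed, hence colors copies of $\m A$ according to their isomorphism type in $\m F^{*}$ only, bounding the Ramsey degree of $\m A$ in $\Age(\m F)$ by $t(\m A)$. The expansion property --- which your route never uses in deriving the Ramsey property --- is then essential in Proposition \ref{prop:Rdeg} to convert these Ramsey-degree bounds into the Ramsey property of $\Age(\m F^{*})$. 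To complete your plan you must either construct the induced flow in full or switch to this Ramsey-degree argument.
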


Another common point with pure order expansions is the following, purely combinatorial, result, which can be thought as the precompact version of Theorem 10.7 from \cite{KPT}.

\begin{thm}

\label{thm:subclass}

Let $\m F$ be a Fra\" iss\'e structure, and $\m F^{*}$ be a Fra\"iss\'e precompact relational expansion of $\m F$. Assume that $\Age(\m F^{*})$ consists of rigid elements, and has the Ramsey property. Then $\Age(\m F^{*})$ admits a Fra\"iss\'e subclass with the Ramsey property and the expansion property relative to $\Age(\m F)$.  

\end{thm}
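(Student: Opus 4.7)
The plan is to produce the Fra\"iss\'e subclass as $\Age(\m F^{**})$, where $\m F^{**}$ is an $H$-fixed point in a minimal $G$-subflow of the orbit closure of $\m F^*$; here $G := \Aut(\m F)$ and $H := \Aut(\m F^*)$. By Theorem \ref{thm:EARP} applied to $\m F^*$, the group $H$ is extremely amenable. Let $X$ be the compact space of precompact relational $L^*$-expansions of $\m F$ whose age is contained in $\Age(\m F^*)$; $G$ acts continuously on $X$. Set $X^* := \overline{G \cdot \m F^*}$ and pick, via Zorn's lemma, a minimal $G$-subflow $Y \subseteq X^*$. Viewing $Y$ as a compact $H$-flow under $H \leq G$ and using extreme amenability of $H$, one obtains an $H$-fixed point $\m F^{**} \in Y$.

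Let $\mathcal K^{**} := \Age(\m F^{**})$. The containment $\mathcal K^{**} \subseteq \Age(\m F^*)$ is immediate: having a prescribed finite substructure is an open condition which is $G$-invariant and satisfied by $\m F^*$, hence satisfied throughout $X^*$. For the expansion property, since $\m F^{**} \in Y$ and $Y$ is minimal, $\overline{G \cdot \m F^{**}} = Y$ is a minimal $G$-flow, so Theorem \ref{thm:EP} gives the expansion property of $\mathcal K^{**}$ relative to $\Age(\m F)$.

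The main obstacle is showing that $\mathcal K^{**}$ has the Ramsey property. The key combinatorial device is that ultrahomogeneity of $\m F^*$, rigidity of $\Age(\m F^*)$, and $H$-invariance of $\m F^{**}$ together yield a well-defined projection $\Phi : \Age(\m F^*) \to \Age(\m F^*)$ sending the $L^*$-type of $\restrict{\m F^*}{S}$ to that of $\restrict{\m F^{**}}{S}$, with image in $\mathcal K^{**}$ and with each fibre $\Phi^{-1}(\m A^{**})$ finite by precompactness. Given $\m A^{**}, \m B^{**} \in \mathcal K^{**}$, a $k$-coloring of $\binom{\m F^{**}}{\m A^{**}}$ then identifies with a $k$-coloring of the disjoint union $\bigsqcup_{\m A^* \in \Phi^{-1}(\m A^{**})} \binom{\m F^*}{\m A^*}$. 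Applying iteratively the Ramsey property of $\Age(\m F^*)$, once per $\m A^* \in \Phi^{-1}(\m A^{**})$, inside a large target $\m B^* \in \Phi^{-1}(\m B^{**})$ provided by the already-established expansion property of $\mathcal K^{**}$, and finally aligning the resulting monochromatic colors across the preimage types by a further iterated Ramsey-pigeonhole step, one extracts a $c$-monochromatic copy of $\m B^{**}$ inside $\m F^{**}$. By Proposition \ref{prop:RPcompact}, $\mathcal K^{**}$ then has the Ramsey property; the alignment of monochromatic colors across the preimage types under $\Phi$ is the delicate point of this argument.

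Finally, $\mathcal K^{**}$ is countable, hereditary and has the joint embedding property as the age of a single countable structure, and consists of rigid elements as a subclass of $\Age(\m F^*)$. The amalgamation property then follows from the Ramsey property via the standard fact that Ramsey classes of rigid structures are amalgamation classes (cf.\ Ne\v set\v ril's theorem, or \cite{KPT}, Section 2). Hence $\mathcal K^{**}$ is a Fra\"iss\'e subclass of $\Age(\m F^*)$ with both the Ramsey property and the expansion property relative to $\Age(\m F)$, as required.
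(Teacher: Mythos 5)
Your overall architecture is the same as the paper's: take a minimal $G$-subflow $Y$ of $\overline{G\cdot \m F^{*}}$, use extreme amenability of $G^{*}=\Aut(\m F^{*})$ (Theorem \ref{thm:EARP}) to find a $G^{*}$-fixed point $\m F^{**}\in Y$, get the expansion property of $\Age(\m F^{**})$ from minimality via Theorem \ref{thm:EP}, and get amalgamation from rigidity plus the Ramsey property \`a la Ne\v set\v ril. Those parts are fine. The map $\Phi$ is indeed well defined (ultrahomogeneity of $\m F^{*}$ plus $G^{*}$-invariance of $\m F^{**}$ suffice), its fibres are finite by precompactness, and the decomposition $\binom{\m F^{**}}{\m A^{**}}=\bigsqcup_{\m A^{*}\in \Phi^{-1}(\m A^{**})}\binom{\m F^{*}}{\m A^{*}}$ is correct.

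The gap is exactly at the point you flag as delicate: the ``alignment of the monochromatic colors across the preimage types by a further iterated Ramsey-pigeonhole step'' is not a step that exists. Iterating the Ramsey property of $\Age(\m F^{*})$ over the types in $\Phi^{-1}(\m A^{**})$ yields a large substructure on which the color of a copy of $\m A^{**}$ depends only on its $\m F^{*}$-isomorphism type; but no Ramsey or pigeonhole argument can force the colors attached to distinct types in the same fibre to coincide. To see this, take the coloring $c(\mc A):=$ the $\m F^{*}$-isomorphism type of $\mc A$, restricted to $\binom{\m F^{**}}{\m A^{**}}$: it is already constant on each type and, by construction, assigns different colors to different types, and it is immune to any further refinement. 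What is actually needed (and what the theorem secretly asserts, via this very coloring) is the existence of copies of $\m B^{**}$ in $\m F^{**}$ inside which the $\m F^{*}$-type of a copy of $\m A^{**}$ is determined by its $(\m F,\vec S)$-type; equivalently, the Ramsey degree of the reduct $\m A$ in $\Age(\m F)$ must be bounded by the number of its expansions in $\Age(\m F^{**})$, not merely in $\Age(\m F^{*})$ (only the former feeds correctly into Proposition \ref{prop:Rdeg}). Your argument only ever produces the weaker bound. The paper closes this gap by working with a $G^{*}$-fixed limit coloring $\gamma\in\overline{G\cdot c}$ and observing that, on a copy of $\m B$ chosen so that the substructure it supports in $\m F^{*}$ coincides with the one it supports in $(\m F,\vec S)$ --- possible since $\Age(\m F,\vec S)\subset\Age(\m F^{*})$ --- and because $\vec S$ is $G^{*}$-fixed, the value $\gamma(\mc A)$ depends only on the isomorphism type of $\mc A$ in $(\m F,\vec S)$. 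That specific interaction between $\vec S$ being simultaneously $G^{*}$-fixed and in the orbit closure of $\vec R^{*}$ is the missing idea; it cannot be replaced by purely combinatorial manipulations of colorings on $\m F^{*}$.
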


Those results are proved in Sections \ref{section:min}, \ref{section:UMF} and \ref{section:subclass} respectively. To illustrate their use, the universal minimal flows of the automorphism groups of the circular directed graphs $\Cy$ and $\Cz$ are computed in Sections \ref{section:Cy} and \ref{section:Cz}. 

Two aspects should be emphasized. First, the only reason for which the original paper \cite{KPT} was not written in the general setting we present here is that, at the time where it was developed, pure order expansions covered almost all known applications of the method to compute universal minimal flows (the cases that were left aside were computed easily with a bit of extra work). Arguably, they consequently constituted the right setting to establish a general correspondence. Interestingly, the fact that the general picture is actually a bit bigger could be an opportunity to change the way we think of structural Ramsey theorems for Fra\"iss\'e classes. Indeed, when analyzing how the most famous results of the field were obtained, it seems that two categories emerge. The first one corresponds to those ``natural'' classes where the Ramsey property holds: finite sets, finite Boolean algebras, finite vector spaces over a finite field. The second one corresponds to those classes where the Ramsey property fails but where this failure can be fixed by adjoining a linear ordering: finite graphs, finite $K_{n}$-free graphs, finite hypergraphs, finite partial orders, finite topological spaces, finite metric spaces. As for those classes where more than a linear ordering is necessary, we have to admit that besides the ones that appear in \cite{KPT} (finite equivalence relations with classes of size bounded by $n$, or equivalence relations with at most $n$ classes) or those, more recent, that we mentioned previously (namely, subtournaments of $\Cy$, subtournaments of $\Cz$, posets that are unions of at most $n$ many chains, posets that are obtained as a totally ordered set of antichains of size at most $n$, and boron tree structures), we are not aware of any additional case, but it would be extremely surprising that nobody encountered such instance before. More likely is the fact that the corresponding results were not considered as true structural Ramsey results, and were therefore overlooked. However, in our opinion, the results of the present paper seem to give the hint that some valuable material may well be found there. For example, they allow to compute the universal minimal flow for \emph{every} automorphism group coming from countable ultrahomogeneous graphs, posets, and tournaments. In fact, we are not aware of any example of a countable ultrahomogeneous structure in a finite language (or, more generally, of a countable ultrahomogeneous $\omega$-categorical structure) where this is not so. Of course, in order to see wether this is a general phenomenon, a natural thing is to examine Cherlin's class of directed graphs. This analysis will be carried out in a forthcoming paper. More on the relevance of precompact expansions is included in Section \ref{section:why}.

Second, as far as the proofs are concerned, it would have been possible to keep all the original arguments. We chose not to completely do so, and to take advantage of the opportunity to make a slightly different, concise and self-contained, exposition, somewhere between \cite{KPT}  and \cite{Pe}. This choice explains why the proofs of Theorems \ref{thm:EP}, \ref{thm:UMF}, \ref{thm:subclass} and even \ref{thm:EARP} are included, when the novelty really concerns the description of the universal minimal flows $\Aut(\Cy)$ and $\Aut(\Cz)$ and takes place in Sections \ref{section:Cy} and \ref{section:Cz}. Of course, for a detailed exposition of the Kechris-Pestov-Todorcevic correspondence, the reader is urged to consult the original article \cite{KPT}, which contains far more than what we chose to cover in the present paper. 

\

\textbf{Acknowledgements}: This project was initiated in 2007, while I was a postdoctoral fellow at the University of Calgary under the supervision of Claude Laflamme and Norbert Sauer. I would therefore like to thank both of them, and to acknowledge support from the Department of Mathematics \& Statistics Postdoctoral Program at the University of Calgary in 2007 and in 2008. I would also like to thank Alexander Kechris, Vladimir Pestov, Miodrag Soki\'c, Stevo Todorcevic, Todor Tsankov and the anonymous referee for helpful comments and suggestions. 

\section{Precompact relational expansions}

\label{section:precompact}

In what follows, $\m{F}$ is a Fra\"{i}ss\'e structure in some countable language $L$, $L^{*}$ is an expansion of $L$ such that $L^{*}\smallsetminus L = \{ R_{i} : i\in I\}$ is countable and consists only of relation symbols. For $i\in I$, the arity of the symbol $R_{i}$ is denoted $a(i)$. $\m{F}^{*}$ is an expansion of $\m{F}$ in $L^{*}$, which does not have to be Fra\"iss\'e at the moment.  We write $\m{F}^{*} = (\m{F}, (R^{*}_{i})_{i\in I})$, or $(\m{F}, \vec{R}^{*})$. We also assume that $\m F$ and $\m F^{*}$ have the set $\N$ of natural numbers as universe. 

The corresponding automorphism groups are denoted $G$ and $G^{*}$ respectively. The group $G^{*}$ will be thought as a subgroup of $G$, and both are closed subgroups of $S_{\infty}$, the permutation group of $\N$ equipped with the topology generated by sets of the form $$ U_{g, F} = \{ h\in G : \restrict{h}{F} = \restrict{g}{F}\},$$ where $g$ runs over $G$ and $F$ runs over all finite subsets of $\N$. This topology admits two natural uniform structures, a left-invariant one, $\mathcal U_{L}$, whose basic entourages are of the form $$ U^{L}_{F} = \{ (g,h) : g^{-1}h \in U_{e, F} \}, \ \ F\subset \N \ \ \textrm{finite},$$ and a right-invariant one, $\mathcal U _{R}$, whose basic entourages are of the form $$ U^{R} _{F} = \{ (g,h) : (g^{-1}, h^{-1}) \in U^{L} _{F}\},\ \ F\subset \N \ \ \textrm{finite}.$$

In fact, those two uniform structures are respectively generated by the two following ultrametrics: $d_{L}$, defined as $$ d_{L}(g,h) = \frac{1}{2^{m}}, \quad m=\min \{ n\in \N : g(n)\neq h(n)\},$$ and $d_{R}$, given by $$d_{R}(g,h) = d_{L}(g^{-1}, h^{-1}).$$

In what follows, we will be interested in the set of all expansions of $\m F$ in $L^{*}$, which we think as the product $$P^{*} := \prod_{i\in I} [2]^{\m{F}^{a(i)}}.$$ 

In this notation, the factor $[2]^{\m{F}^{a(i)}} = \{ 0, 1\}^{\m{F}^{a(i)}}$ is thought as the set of all $a(i)$-ary relations on $\m F$. Each factor $[2]^{\m{F}^{a(i)}}$ is equipped with an ultrametric $d_{i}$, defined by $$ d_{i}(S_{i},T_{i}) = \frac{1}{2^{m}}, \quad m = \min\{ n\in \N : \restrict{S_{i}}{[n]} \neq \restrict{T_{i}}{[n]}\}$$ where $\restrict{S_{i}}{[m]}$ (resp. $\restrict{T_{i}}{[m]}$) stands for $S_{i} \cap [m]^{a(i)}$ (resp. $T_{i} \cap [m]^{a(i)}$). So $\restrict{S_{i}}{[m]} = \restrict{T_{i}}{[m]}$ means that $$\forall y_{1}\ldots y_{a(i)} \in [m] \quad S_{i}(y_{1}\ldots y_{a(i)}) \Leftrightarrow T_{i}(y_{1}\ldots y_{a(i)}).$$

The group $G$ acts continuously on each factor as follows: if $i\in I$, $S_{i}\in [2]^{\m{F}^{a(i)}}$ and $g\in G$, then $g\cdot S_{i}$ is defined by $$\forall y_{1}\ldots y_{a(i)} \in \m F \quad g\cdot S_{i}(y_{1}\ldots y_{a(i)}) \Leftrightarrow S_{i}(g^{-1}(y_{1})\ldots g^{-1}( y_{a(i)})).$$

This allows to define an action of $G$ on the product $P^*$, where $g\cdot \vec S$ is simply defined as $(g\cdot S_{i})_{i\in I}$ whenever $\vec S = (S_{i})_{i\in I} \in P^*$ and $g\in G$. 

This action is continuous when $P^*$ is equipped with the product topology (it is then usually referred to as the \emph{logic action}), but also when it is endowed with the supremum distance $d^{P^*}$ of all the distances $d_{i}$. The corresponding topology is finer than the product topology if $I$ is infinite, but it is the one we will be interested in in the sequel because of its connection to the quotient $G/G^{*}$.

\begin{prop}

\label{prop:finite}

The metric subspace $G\cdot \vec R^{*}\subset P^{*}$ is precompact iff every element of $\Age(\m F)$ has finitely many expansions in $\Age(\m F^{*})$. 

\end{prop}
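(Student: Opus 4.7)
The plan is to translate precompactness of $G\cdot \vec R^{*}$ into finiteness of certain "initial segments", and then use ultrahomogeneity of $\m F$ to identify those segments with expansions of members of $\Age(\m F)$.

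First I would unpack the metric $d^{P^*}$. By construction, $d^{P^*}(\vec S,\vec T)\leq 1/2^m$ if and only if $\restrict{S_i}{[m]}=\restrict{T_i}{[m]}$ for every $i\in I$, i.e.\ the two expansions determine the same $L^*$-structure on $[m]$. Consequently, balls of radius $1/2^m$ in $G\cdot \vec R^{*}$ are in bijection with the possible values of the substructure $(g\cdot\vec R^{*})\restriction[m]$ as $g$ ranges over $G$. So precompactness of $G\cdot\vec R^{*}$ (equivalently: for every $m$ only finitely many $1/2^m$-balls suffice) is the same thing as saying that, for every $m\in\N$, the set $E_m := \{(g\cdot\vec R^{*})\restriction[m] : g\in G\}$ is finite.

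Next I would identify $E_m$ with the set of $L^*$-expansions of $\m F\restriction[m]$ that lie in $\Age(\m F^{*})$. A direct computation with the logic action shows that $g^{-1}\restriction[m]$ is an $L^*$-isomorphism from $(g\cdot\vec R^{*})\restriction[m]$ onto $\m F^{*}\restriction g^{-1}([m])$, whose $L$-reduct is $\m F\restriction[m]$ (because $g\in\Aut(\m F)$). This shows every member of $E_m$ is an expansion of $\m F\restriction[m]$ belonging to $\Age(\m F^{*})$. Conversely, given any such expansion $\m B^{*}$, one embeds it into $\m F^{*}$, extends the underlying $L$-embedding to some $h\in\Aut(\m F)$ by ultrahomogeneity of $\m F$, and checks that $(h^{-1}\cdot\vec R^{*})\restriction[m]=\m B^{*}$. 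Hence $E_m$ is in bijection with the expansions of $\m F\restriction[m]$ in $\Age(\m F^{*})$.

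Combining the two steps, precompactness of $G\cdot\vec R^{*}$ is equivalent to: for every $m$, $\m F\restriction[m]$ admits only finitely many expansions in $\Age(\m F^{*})$. To finish it remains to upgrade this to the stated "every $\m A\in\Age(\m F)$". The easy direction is clear since each $\m F\restriction[m]$ is in $\Age(\m F)$. For the other direction, fix $\m A\in\Age(\m F)$ and choose $m$ large enough so that $\m A$ embeds in $\m F\restriction[m]$; after identifying $\m A$ with its image, each expansion $\m A^{*}$ of $\m A$ in $\Age(\m F^{*})$ must be the restriction to $A$ of some expansion of $\m F\restriction[m]$ in $\Age(\m F^{*})$. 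I would produce such an extension by the same trick as in the previous paragraph: embed $\m A^{*}$ in $\m F^{*}$, extend the underlying $L$-embedding to an automorphism $g$ of $\m F$ via ultrahomogeneity, and check that $(g^{-1}\cdot\vec R^{*})\restriction[m]$ restricts to $\m A^{*}$ on $A$. Distinct expansions of $\m A$ then come from distinct expansions of $\m F\restriction[m]$, giving the desired finiteness.

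The only genuinely delicate point is this extension argument in the last paragraph: one has to be careful to track the identifications so that the restriction to $A\subseteq[m]$ really is $\m A^{*}$ on the nose (not merely up to isomorphism). All the rest is bookkeeping about the metric and a standard appeal to ultrahomogeneity, so I do not expect any other serious obstacle.
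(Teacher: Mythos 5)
Your proof is correct and follows essentially the same route as the paper's: precompactness is read off from the finiteness, for each $m$, of the set of restrictions $\restrict{(g\cdot\vec R^{*})}{[m]}$, and ultrahomogeneity of $\m F$ (applied to extend an embedding of an expansion into $\m F^{*}$ to an automorphism) identifies these restrictions with the expansions lying in $\Age(\m F^{*})$. The only cosmetic differences are that you first treat the structures $\restrict{\m F}{[m]}$ and then transfer to an arbitrary $\m A \in \Age(\m F)$, whereas the paper handles an arbitrary $\m A$ directly, and that the paper works with the substructure \emph{generated} by $[m]$ rather than $[m]$ itself --- a distinction that matters only when $L$ contains function symbols.
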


Recall that a metric space $X$ is precompact when its completion is compact. Equivalently, it can be covered by finitely many balls of arbitrary small diameter. When the space is only uniform as opposed to metric, that means that for every basic entourage $V$, there are finitely many $x_{1}, \ldots, x_{n}$ so that the family of sets $(\{ x \in X : (x, x_{i})\in V\})_{i\leq n}$ covers $X$.

\begin{proof}
Observe first that by ultrahomogeneity of $\m F$, every expansion $\m A^{*}$ of any finite substructure $\m A \subset \m F$ can be realized in the following sense: $$\exists g \in G \quad \restrict{(\m F, g\cdot\vec R^{*})}{A} \cong \m A^{*}$$

Suppose that $G\cdot \vec R^{*}$ is precompact. Fix $m$ large enough so that $A \subset [m]$. It is possible to cover $G\cdot \vec R^{*}$ by finitely many balls of radius $1/2^{m}$, call them $B_{1}, \ldots, B_{l}$. Note that if $\vec S, \vec T$ belong to the same ball, then $\restrict{\vec S}{[m]} = \restrict{\vec T}{[m]}$. In particular, $\restrict{\vec S}{A} = \restrict{\vec T}{A}$. It follows that there are at most $l$-many non-isomorphic structures of the form $\restrict{(\m F, g\cdot \vec R^{*})}{A}$, and therefore that $\m A$ has at most $l$-many expansions in $\Age(\m F^{*})$. 

Conversely, suppose that every element of $\Age(\m F)$ has finitely many expansions in $\Age(\m F^{*})$. Let $m \in \N$. We are going to cover $G\cdot \vec R^{*}$ with finitely many balls of radius at most $1/2^{m}$. Call $\m A$ the substructure of $\m F$ generated by $[m]$, and let $\m{A}^{*}_{1},\ldots, \m{A}^{*}_{l}$ denote all the expansions of $\m{A}$ in $\Age(\m F)$. For $j\leq l$, let $$B_{j} = \{ \vec S \in G\cdot\vec R^{*} : \restrict{(\m{F}, \vec S)}{A}\cong \m A^{*}_{j}\}.$$ 

Then $B_{1}, \ldots, B_{l}$ are balls of radius at most $1/2^{m}$ and because $\m{A}^{*}_{1},\ldots, \m{A}^{*}_{l}$ exhaust all expansions of $\m{A}$ in $\Age(\m F^{*})$, we also have $$ G\cdot \vec R^{*} = \bigcup _{j=1}^{l}B_{j}. \qquad \qedhere$$ \end{proof}

\begin{defn}

\label{defn:precompact}

The expansion $\m F^{*} = (\m F , \vec R^{*})$ is a \emph{precompact} relational expansion of $\m F$ when every element of $\Age (\m F)$ only has finitely many expansions in $\Age (\m F^{*})$. Equivalently, the metric subspace $G\cdot \vec R^{*}$ of $P^{*}$ is precompact. In that case, we denote by $X^{*}$ the corresponding completion, ie $$X ^{*} = \overline{G\cdot \vec R^{*}}\quad (\textrm{where the closure in taken in $P^*$, which is complete}).$$

\end{defn}

We will see now that when $\m F^{*}$ is Fra\"iss\'e, there is a close connection between the metric space $(P^{*}, d^{P^{*}})$ with the quotient $G/G^{*}$. As a set, $G/G^{*}$ can be thought as $G\cdot\vec{R}^{*}$, the orbit of $\vec{R}^{*}$ in $P^*$, by identifying $[g]$, the equivalence class of $g$,  with $g\cdot \vec{R}^{*}$ (recall that $\vec R^{*}$ is defined as $\m F^{*} = (\m F, \vec R^{*})$). With this identification, the logic action of $G$ on $G\cdot\vec{R}^{*}$ coincides with the natural action on $G/G^{*}$ by left translations. The two uniform structures on $G$ project onto uniform structures on $G/G^{*}$, but we will pay a particular attention to the projection of $\mathcal U _{R}$, whose basic entourages are of the form $$V_{F} = \{ ([g], [h]) : \restrict{g^{-1}}{F} = \restrict{h^{-1}}{F} \},\ \ F\subset \N \ \ \textrm{finite}.$$

% $$ \rho_{R}([g],[h]) = $$

\begin{prop}

\label{prop:unif}

If $\m F^{*}$ is Fra\"iss\'e, then the projection of $\mathcal U_{R}$ on $G/G^{*}\cong G\cdot\vec{R}^{*}$ coincides with the uniform structure induced by the restriction of $d^{P^*}$ on $G\cdot\vec{R}^{*}$.   

\end{prop}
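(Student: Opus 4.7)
The plan is to verify that both basic entourage systems refine one another, which for ultrametric uniformities reduces to showing that for every finite $F \subset \N$ there is some $m$ with (the projection of) $U^{R}_{[m]}$ contained in $V_F$'s description in $P^{*}$-terms, and conversely. Throughout, I identify $[g] \in G/G^{*}$ with $g\cdot \vec R^{*} \in G\cdot\vec R^{*}$, and I use that $U^{R}_{[m]}$ is an equivalence relation on $G$, so its image under the quotient map is exactly $\{([g],[h]) : \exists g' \in [g],\, \exists h' \in [h],\, (g')^{-1}\restriction{[m]} = (h')^{-1}\restriction{[m]}\}$.

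The easy direction is that $V_{[m]}$ forces small $d^{P^{*}}$-distance. Suppose $([g],[h]) \in V_{[m]}$, and pick representatives $g', h'$ with $(g')^{-1}\restriction{[m]} = (h')^{-1}\restriction{[m]}$. Then, for every $i \in I$ and every tuple $(y_{1},\ldots,y_{a(i)}) \in [m]^{a(i)}$, the definition of the action gives
\[
g' \cdot R^{*}_{i}(y_{1},\ldots,y_{a(i)}) \Leftrightarrow R^{*}_{i}((g')^{-1}(y_{1}),\ldots,(g')^{-1}(y_{a(i)})) \Leftrightarrow h' \cdot R^{*}_{i}(y_{1},\ldots,y_{a(i)}),
\]
so $d_{i}(g' \cdot R^{*}_{i},\, h' \cdot R^{*}_{i}) \leq 1/2^{m}$ uniformly in $i$, hence $d^{P^{*}}(g \cdot \vec R^{*}, h \cdot \vec R^{*}) \leq 1/2^{m}$. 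This direction uses no hypothesis on $\m F^{*}$ beyond what is already in force.

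The harder direction, where ultrahomogeneity of $\m F^{*}$ enters, goes the other way. Fix $F = [k]$ and take $m = k$. Assume $d^{P^{*}}(g \cdot \vec R^{*}, h \cdot \vec R^{*}) \leq 1/2^{m}$; then the two expansions agree on $[m]$, so the $L^{*}$-structures $\m B^{*} := (\m F, g\cdot\vec R^{*})\restriction{[m]}$ and $(\m F, h\cdot\vec R^{*})\restriction{[m]}$ coincide. Set $A_{g} = g^{-1}([m])$ and $A_{h} = h^{-1}([m])$. Because $g, h \in \Aut(\m F)$, the maps $g^{-1}\restriction{[m]}$ and $h^{-1}\restriction{[m]}$ are $L^{*}$-isomorphisms from $\m B^{*}$ onto $\m F^{*}\restriction{A_{g}}$ and $\m F^{*}\restriction{A_{h}}$ respectively. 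Composing, the partial map $g^{-1}(x) \mapsto h^{-1}(x)$ for $x \in [m]$ is an $L^{*}$-isomorphism from $\m F^{*}\restriction{A_{g}}$ onto $\m F^{*}\restriction{A_{h}}$, and, since $\m F^{*}$ is Fraïssé (hence ultrahomogeneous), it extends to some $\sigma \in G^{*}$.

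Set $g' = g$ and $h' = h\sigma$; then $[h'] = [h]$. The defining property of $\sigma$ is that $\sigma \circ g^{-1}\restriction{[m]} = h^{-1}\restriction{[m]}$, which rewrites as $(h')^{-1}\restriction{[m]} = \sigma^{-1}\circ h^{-1}\restriction{[m]} = g^{-1}\restriction{[m]} = (g')^{-1}\restriction{[m]}$. Restricting to the subset $F = [k] \subseteq [m]$ gives $(g')^{-1}\restriction{F} = (h')^{-1}\restriction{F}$, hence $([g],[h]) \in V_{F}$. The main obstacle is exactly this step: one has to realize, given only that two expansions match on an initial segment, that the two orbits $[g]$ and $[h]$ can be represented by permutations whose inverses literally coincide on the prescribed finite set, and ultrahomogeneity of $\m F^{*}$ is precisely what allows one to absorb the discrepancy between $A_{g}$ and $A_{h}$ into an element of $G^{*}$.
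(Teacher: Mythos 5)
Your argument is correct and follows essentially the same route as the paper's: the easy inclusion is a direct computation, and the reverse inclusion uses ultrahomogeneity of $\m F^{*}$ to extend the partial map $g^{-1}(x)\mapsto h^{-1}(x)$ to an element $\sigma$ of $G^{*}$ and then adjust the representative ($h\mapsto h\sigma$ for you, $s\mapsto sg^{-1}$ in the paper --- the same move). The only points to tidy up are cosmetic: if $L$ contains function symbols, $[m]$ need not support a substructure of $\m F$, so before invoking ultrahomogeneity one should replace $[m]$ by the finite substructure it generates (this is why the paper works with entourages $W_{F}$ for $F$ supporting a substructure, using local finiteness), and the strict-versus-nonstrict inequalities around $d^{P^{*}}\leq 1/2^{m}$ should be matched to the definition of $d_{i}$, though neither affects the equivalence of the uniformities.
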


%Note that this result does \emph{not} hold for the left uniform structure on $G/G^{*}$, see \cite{Pe} p.128 for more on that aspect.  

\begin{proof}

The distance $d^{P^*}$ induces a uniform structure whose entourages are generated by sets of the form $$W_{m} = \{ (\vec S, \vec T) : d^{P^*}(\vec S, \vec T)< \frac{1}{2^{m}}\} = \{ (\vec S, \vec T) : \restrict{\vec S}{[m]} = \restrict{\vec T}{[m]}\},$$ where $\restrict{\vec S}{[m]} = \restrict{\vec T}{[m]}$ means that $$\forall i \in I \quad \restrict{S_{i}}{[m]} = \restrict{T_{i}}{[m]}.$$

Equivalently, because $\m F$ is locally finite, this uniform structure is generated by those sets of the form $$W_{F} = \{ (\vec S, \vec T) : \restrict{\vec S}{F} = \restrict{\vec T}{F}\},$$ where $F$ is a finite subset of $\N$ supporting a substructure of $\m F$. Let $F$ be such a finite set. From the definition of $V_{F}$ and $W_{F}$, it is clear that $V_{F}\subset (W_F\cap G/G^{*})$. We are going to show that $(W_{F}\cap G/G^{*}) \subset V_{F}$ also holds. Let $(\vec S, \vec T) \in W_{F}\cap G/G^{*} $, and  fix $s, t \in G$ so that $\vec S = s \cdot \vec R^{*}$ and $\vec T = t\cdot \vec R^{*}$. Then, for every $ i \in I $ and every $ y_{1}\ldots y_{a(i)} \in F$, $$R^{*}_{i}(s^{-1}(y_{1})\ldots s^{-1}(y_{a(i)})) \Leftrightarrow R^{*}_{i}(t^{-1}(y_{1})\ldots t^{-1}(y_{a(i)})).$$ 

Therefore, the map $s^{-1}(F)\longrightarrow t^{-1}(F)$ defined by $s^{-1}(n)\longmapsto t^{-1}(n)$ for every $n\in F$ is an isomorphism between $s^{-1}(F)$ and $t^{-1}(F)$ seen as substructures of $\m F^{*}$. By ultrahomogeneity of $\m F^{*}$ (this is where we use that $\m F^{*}$ is Fra\"iss\'e), it is possible to extend this isomorphism to some element $g\in G^{*}$. We then have: $$\forall n \in F \quad gs^{-1} (n) = t^{-1} (n).$$ 

Since $gs^{-1} = (sg^{-1})^{-1}$, we obtain $([sg^{-1}],[t]) \in V_{F}$. But $[sg^{-1}] = [s]$, so $([s],[t]) \in V_{F}$, ie $(\vec S, \vec T) \in V_{F}$.  \qedhere

\end{proof}

Therefore, in the sequel, when $\m F^{*}$ is Fra\"iss\'e, we will really think of the uniform space $G/G^{*}$ as the metric subspace $G\cdot \vec R^{*}$ of $P^*$. 

\begin{cor}

Assume that $\m F^{*}$ is Fra\"iss\'e. Then $\m F^{*}$ is a precompact expansion of $\m F$ iff the uniform space $G/G^{*}$ is precompact. In that case, we can identify the compact spaces $X^{*}$ and $\widehat{G/G^{*}}$ as well as the flows $G \curvearrowright X^{*}$ and $G\curvearrowright\widehat{G/G^{*}}$. 

\end{cor}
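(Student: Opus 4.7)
The plan is to compose the two preceding propositions. By Proposition \ref{prop:unif}, the bijection $[g]\longleftrightarrow g\cdot \vec R^{*}$ identifies $G/G^{*}$ (equipped with the projection of $\mathcal U_{R}$) with the metric subspace $G\cdot \vec R^{*}\subset (P^{*}, d^{P^{*}})$ as uniform spaces. Since precompactness depends only on the uniform structure, one space is precompact iff the other is. But precompactness of $G\cdot \vec R^{*}$ is, by Definition \ref{defn:precompact} (which rests on Proposition \ref{prop:finite}), exactly the statement that $\m F^{*}$ is a precompact expansion of $\m F$. This gives the equivalence.

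Assuming precompactness, a uniform isomorphism between two precompact uniform spaces extends uniquely to a uniform (hence topological) isomorphism between their completions, which are compact. The completion of $G/G^{*}$ is by definition $\widehat{G/G^{*}}$. On the other hand, $P^{*}$ is a countable product of complete ultrametric spaces, hence complete in the supremum metric $d^{P^{*}}$, so the completion of $G\cdot \vec R^{*}$ is just its closure $X^{*}$ in $P^{*}$. Therefore $X^{*}\cong \widehat{G/G^{*}}$ as compact spaces.

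Finally, the bijection $[g]\longleftrightarrow g\cdot \vec R^{*}$ intertwines left translation $[g]\longmapsto [hg]$ with the logic action $g\cdot \vec R^{*}\longmapsto (hg)\cdot \vec R^{*}$ already on the dense orbit. The logic action of $G$ on $P^{*}$ is continuous in the supremum metric (as recorded earlier), and $X^{*}$ is $G$-invariant, so we obtain a continuous $G$-flow on $X^{*}$; transferring through the identification yields the continuous $G$-flow on $\widehat{G/G^{*}}$ extending left translation. The only point requiring a bit of care is to verify that these two continuous extensions really agree, but this follows from the fact that they already coincide on a dense subset and that each group element acts by a uniform homeomorphism.
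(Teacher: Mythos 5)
Your proposal is correct and follows exactly the route the paper intends: the corollary is the composition of Proposition \ref{prop:finite} (precompactness of $G\cdot\vec R^{*}$ equals finiteness of the number of expansions) with Proposition \ref{prop:unif} (the uniform identification of $G/G^{*}$ with $G\cdot\vec R^{*}$), together with the observation that $P^{*}$ is complete so the completion is the closure $X^{*}$, and that the identification intertwines left translation with the logic action on the dense orbit. The paper leaves all of this implicit, so your write-up is simply a fuller version of the same argument; no gaps.
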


In what follows, we will be interested in generalizing the theory of Kechris-Pestov-Todorcevic to precompact relational expansions instead of pure order expansions. However, before doing so, let us provide a few examples. Taking $\m F = \N$ (the language $L$ is then empty), we have $G=S_{\infty}$ and a Fra\"iss\'e expansion $\m F^{*}$ of $\m F$ such that $L^{*}\smallsetminus L$ is relational is simply a relational Fra\"iss\'e structure. It is a precompact expansion of $\N$ exactly when $\m F^{*}$ only has finitely many substructures up to isomorphism in each finite cardinality. The group $G^{*}$ is then called \emph{oligomorphic}. A classical case where this happens is when the language $L^{*}$ is relational and only has finitely many symbols in each arity (e.g. graphs, directed graphs,...). On the other hand, there are also natural Fra\"iss\'e relational structures which are not precompact expansions of $\N$. Countable ultrahomogeneous metric spaces with infinitely many distances fall into that category. The appropriate language is made of binary relations symbols $(R_{\alpha})_{\alpha}$ and $R_{\alpha}(x,y)$ holds exactly when $d(x,y) = \alpha$. One typical example is the \emph{rational Urysohn space} $\textbf{U} _{\Q}$, which is, up to isometry, the unique countable ultrahomogeneous metric space with rational distances into which any finite metric space with rational distances embeds.

\section{Extreme amenability and Ramsey property}

The purpose of this section is to provide a proof of Theorem \ref{thm:EARP} (again, this is only done for the sake of completeness):

\begin{thmm}[Kechris-Pestov-Todorcevic, \cite{KPT}, Theorem 4.8]

Let $\m F$ be a Fra\"{i}ss\'e structure, and let $G = \Aut(\m F)$. The following are equivalent: 

\begin{enumerate}
\item[i)] The group $G$ is extremely amenable. 

\item[ii)] The class $\Age (\m F)$ has the Ramsey property and consists of rigid elements.  
\end{enumerate}

\end{thmm}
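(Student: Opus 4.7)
The plan is to prove both directions separately, exploiting the interplay between $\m F$, $\Age(\m F)$, and $G=\Aut(\m F)$.

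For $(i)\Rightarrow(ii)$, I first derive rigidity from an action on orderings. The compact space $\LO(\m F)\subset [2]^{\m F\times\m F}$ of linear orderings on $\m F$ is a $G$-flow under the logic action, so extreme amenability produces a $G$-invariant linear ordering $<$ on $\m F$. Given any finite $\m A\in\Age(\m F)$ and any $\sigma\in\Aut(\m A)$, ultrahomogeneity extends $\sigma$ to some $g\in G$; since $g$ preserves $<$, so does $\sigma$, forcing $\sigma=\mathrm{id}$ (the only order automorphism of a finite linear order). For the Ramsey property, fix $\m A,\m B\in\Age(\m F)$ and $k\in\N$; by Proposition \ref{prop:RPcompact} it suffices to show $\m F\arrows{(\m B)}{\m A}{k}$. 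Given any $c:\binom{\m F}{\m A}\to [k]$, act continuously on the compact space $[k]^{\binom{\m F}{\m A}}$ and apply extreme amenability to a minimal subflow of $\overline{G\cdot c}$ to obtain a $G$-fixed coloring $c^{*}$ lying in that orbit closure. Ultrahomogeneity implies $G$ acts transitively on $\binom{\m F}{\m A}$, so $c^{*}$ is constant with some value $j$. Approximating $c^{*}$ by some $g\cdot c$ on the finite set $\binom{\mc B_{0}}{\m A}$ for any fixed $\mc B_{0}\in\binom{\m F}{\m B}$ then yields $g\in G$ such that $c$ is constantly $j$ on $\binom{g^{-1}\mc B_{0}}{\m A}$.

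For $(ii)\Rightarrow(i)$, fix a compact $G$-flow $G\curvearrowright X$ and pick $x_{0}\in X$. By a standard limiting argument based on compactness of $X$, it suffices to exhibit, for each continuous $f\in C(X,[0,1])$, each finite $K\subset G$ and each $\epsilon>0$, some $g\in G$ with $|f(kgx_{0})-f(gx_{0})|<\epsilon$ for all $k\in K$. Using continuity of the action, choose a finite $F\subset\m F$ large enough that $f(gx_{0})$ is determined up to $\epsilon/4$ by a suitable finite restriction of $g$. Let $\m A=\restrict{\m F}{F}$ (rigid by hypothesis), and let $\m B\in\Age(\m F)$ be the (finite) substructure generated by $F\cup\bigcup_{k\in K}kF$. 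By rigidity of $\m A$, copies of $\m A$ in $\m F$ biject with cosets of $\Stab_{G}(F)$, so discretizing $f$ into finitely many intervals of length $\epsilon/4$ induces a finite coloring $c$ of $\binom{\m F}{\m A}$. The Ramsey property together with Proposition \ref{prop:RPcompact} delivers a monochromatic $\mc B\in\binom{\m F}{\m B}$; extracting a suitable $g\in G$ sending the designated copy of $\m A$ inside $\m B$ to one inside $\mc B$ then yields the desired approximate fixed point, since inside $\mc B$ each $kF$ now also sits as a monochromatic copy of $\m A$.

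The main obstacle is the technical translation carried out in $(ii)\Rightarrow(i)$: one must simultaneously choose $F$, $\m A$, $\m B$ and a finite quantization of $f$ so that a monochromatic copy in $\binom{\m F}{\m B}$ faithfully encodes a dynamical approximate fixed point for \emph{every} $k\in K$ at once. Rigidity of $\m A$ is what makes the coloring well-defined on $\binom{\m F}{\m A}$, rather than merely on an orbit space on which $\Aut(\m A)$ would further act; without it, different enumerations of the same copy could receive different colors. Once the approximate fixed points are in hand, compactness of $X$ and a direct net argument over the directed family of pairs $(K,\epsilon)$ assemble them into a genuine fixed point of $G\curvearrowright X$, completing the proof.
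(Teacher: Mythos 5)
Your direction $(i)\Rightarrow(ii)$ is correct and follows the paper's argument exactly: the invariant linear ordering from $\LO(\m F)$ gives rigidity, and a $G$-fixed (hence constant, by transitivity on copies) coloring in $\overline{G\cdot c}$ gives the Ramsey property after approximating on $\binom{\m B}{\m A}$.

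In $(ii)\Rightarrow(i)$ your strategy is again the paper's, but as written there is a genuine gap at the one delicate point of the whole proof: the interplay between the two uniformities on $G$. You want to discretize $g\mapsto f(g\cdot x_{0})$ into a coloring of $\binom{\m F}{\m A}$. The identification of copies of $\m A=\restrict{\m F}{F}$ with \emph{left} cosets of $\Stab(F)$ (via $g\Stab(F)\mapsto g(F)$, which is what your choice of $\m B\supseteq F\cup\bigcup_{k\in K}kF$ and the phrase ``each $kF$ sits inside $\mc B$'' presuppose) requires the function to be approximately constant on the sets $\{h: \restrict{h}{F}=\restrict{g}{F}\}$, i.e.\ to be \emph{left} uniformly continuous. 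But for a continuous action on a compact space the orbit map $g\mapsto g\cdot x_{0}$ is only \emph{right} uniformly continuous, so ``$f(gx_{0})$ is determined up to $\epsilon/4$ by a suitable finite restriction of $g$'' is true only with $\restrict{g^{-1}}{F}$ in place of $\restrict{g}{F}$, and with that reading your coloring is not well defined on left cosets. There is a second, matching mismatch: even granting the coloring, the Ramsey argument controls the oscillation of the induced function on a left translate $g\{h:h\in F\}$, whereas your target $|f(kgx_{0})-f(gx_{0})|<\epsilon$ for all $k\in K$ requires control on the right translate $Kg$ (note $f(gkx_{0})\approx f(gx_{0})$ would be useless, since $gkx_{0}\neq k(gx_{0})$).

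Both defects are cured simultaneously by one inversion, which is exactly what the paper does: work with $\bar f(g)=f(g^{-1}\cdot x_{0})$, which \emph{is} left uniformly continuous and hence approximately constant on left cosets of $\Stab(F)$, take $F_{0}=K^{-1}\cup\{e\}$, and apply the Ramsey property to get $g$ with $\bar f$ $\epsilon$-constant on $gF_{0}$; unwinding gives $f(k\cdot y_{0})\approx f(y_{0})$ for $y_{0}=g^{-1}\cdot x_{0}$ and all $k\in K$. (Equivalently, you may keep your function but identify copies with \emph{right} cosets via $\Stab(F)g\mapsto g^{-1}(F)$, take $\m B\supseteq F\cup\bigcup_{k\in K}k^{-1}(F)$, and set $g:=h^{-1}$ where $h(\m B)=\mc B$ is the monochromatic copy.) Your use of rigidity to make the coloring single-valued on copies, and the final finite-intersection/net argument assembling approximate fixed points into a fixed point (with finitely many test functions bundled into one vector-valued map), are correct and as in the paper.
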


We first detail the argument according to which the Ramsey property for $\Age(\m F)$ is equivalent to its infinite version with $\m F$. 

\begin{prop}

\label{prop:RPcompact}

The class $\Age (\m F)$ has the Ramsey property iff $$ \forall k \in \N \quad \forall \m A, \m B \in \mathcal{K} \quad \m F \arrows{(\m B)}{\m A}{k}.$$

\end{prop}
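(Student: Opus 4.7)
The statement is an equivalence, so the plan is to prove both implications separately. The forward direction is essentially a one-line transfer, while the converse is a standard compactness argument.

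For the easy direction, I would assume that $\Age(\m F)$ has the Ramsey property. Given $k\in \N$ and $\m A, \m B \in \Age(\m F)$, pick $\m C \in \Age(\m F)$ witnessing $\m C \arrows{(\m B)}{\m A}{k}$. Since $\m C \in \Age(\m F)$, there is a copy $\mc C \subset \m F$. Any $k$-coloring $c \colon \binom{\m F}{\m A} \to [k]$ restricts to a $k$-coloring of $\binom{\mc C}{\m A}$, for which one obtains a monochromatic $\mc B \in \binom{\mc C}{\m A}$; this same $\mc B$ sits inside $\binom{\m F}{\m A}$, yielding $\m F \arrows{(\m B)}{\m A}{k}$.

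For the converse, I would argue by contrapositive using a compactness argument in $[k]^{\binom{\m F}{\m A}}$. Suppose $\Age(\m F)$ fails the Ramsey property, witnessed by some $k, \m A, \m B$: for every $\m C \in \Age(\m F)$, there is a coloring $c_{\m C} \colon \binom{\m C}{\m A} \to [k]$ with no monochromatic copy of $\m B$. I would then enumerate $\m F = \{x_0, x_1, \ldots\}$ and, using that $\m F$ is locally finite, take $\m F_n$ to be the finite substructure of $\m F$ generated by $\{x_0, \ldots, x_{n-1}\}$. Each $\m F_n \in \Age(\m F)$, so there is a bad coloring $c_n \colon \binom{\m F_n}{\m A} \to [k]$. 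Extending each $c_n$ arbitrarily (say, by $0$) to all of $\binom{\m F}{\m A}$ produces a sequence in the compact product space $[k]^{\binom{\m F}{\m A}}$. By compactness, pass to a convergent subsequence with pointwise limit $c$.

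The verification step is that $c$ witnesses $\m F \not\arrows{(\m B)}{\m A}{k}$. Given any $\mc B \in \binom{\m F}{\m B}$, the set $\binom{\mc B}{\m A}$ is finite, hence contained in some $\m F_n$ for all sufficiently large $n$; since $c_n$ is non-monochromatic on $\binom{\mc B}{\m A}$ and the value of $c_n$ stabilizes to $c$ on this finite set along the subsequence, $c$ is non-monochromatic on $\binom{\mc B}{\m A}$ as well. This contradicts the assumption $\m F \arrows{(\m B)}{\m A}{k}$. The only mild subtlety is the choice of the $\m F_n$ in case $L$ has function symbols, but local finiteness of $\m F$ takes care of this; there is no serious obstacle.
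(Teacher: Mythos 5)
Your proof is correct and follows the same strategy as the paper: the forward direction is the immediate transfer through a copy of $\m C$ in $\m F$, and the converse is the contrapositive plus a compactness argument that glues the bad colorings of finite substructures into a single bad coloring of $\binom{\m F}{\m A}$. The only difference is in the choice of limiting device: the paper takes the limit of the colorings $\chi_{\restrict{\m F}{E}}$ along a free ultrafilter on the finite subsets of $\N$, whereas you extract a pointwise convergent subsequence from an exhaustion of $\m F$ by finitely generated substructures, which works equally well since $[k]^{\binom{\m F}{\m A}}$ is compact metrizable.
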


\begin{proof}
Because every $\m C \in \Age(\m F)$ embeds in $\m F$, it is clear that the Ramsey property implies $$ \forall k \in \N \quad \forall \m A, \m B \in \mathcal{K} \quad \m F \arrows{(\m B)}{\m A}{k}.$$

Conversely, suppose that the Ramsey property does not hold. Then there exist $k\in \N$, $\m A, \m B \in \Age(\m F)$ so that no $\m C \in \Age(\m F)$ satisfies $$\m C \arrows{(\m B)}{\m A}{k}.$$ 

Equivalently, for every $\m C\in \Age(\m F)$, there exists a coloring $ \chi_{\m C} : \funct{\binom{\m C}{\m A}}{[k]}$ with no monochromatic set of the form $\binom{\mc B}{\m A}$. Consider a free ultrafilter $U$ on the set of finite non empty subsets of $\N$ so that for every finite $D \subset \N$, $$\{ E\subset \N : D\subset E \ \wedge \ \restrict{\m F}{E} \in \Age(\m F)\}\in U.$$  

Note that the local finiteness of $\m F$ indeed guarantees that such an ultrafilter exists. For $\mc A \in \binom{\m F}{\m A}$ and $\varepsilon \in [k]$, define $$ K^{\varepsilon} _{\mc A} = \{ E\subset \N : \mc A \subset (\restrict{\m F}{E}) \  \wedge  \ \chi_{\restrict{\m F}{E}}(\mc A) = \varepsilon\}.$$

Then, define a coloring $\chi : \funct{\binom{\m F}{\m A}}{[k]}$ by $$\chi(\mc A) = \varepsilon \quad \Leftrightarrow \quad K^{\varepsilon} _{\mc A} \in U.$$ 

Let $\mc B$ be an arbitrary copy of $\m B$ in $\m F$. We claim that $\binom{\mc B}{\m A}$ is not $\chi$-monochromatic. Towards a contradiction, assume the contrary, and call $\varepsilon_{0}$ the corresponding constant value of $\chi$. Then the following set is in $U$: $$\{ E\subset \N : \mc B\subset (\restrict{\m F}{E})\} \cap \bigcap_{\mc A \in \binom{\mc B}{\m A}} K^{\varepsilon_{0}} _{\mc A}$$

Therefore, it is not empty and contains some finite set $E_{0}$. Then $\m B\subset (\restrict{\m F}{E_{0}})$ and $\binom{\restrict{\m F}{E_{0}}}{\m A}$ is $\chi_{\restrict{\m F}{E_{0}}}$-monochromatic with color $\varepsilon_{0}$. Hence, $\binom{\mc B}{\m A}$ is $\chi_{\restrict{\m F}{E_{0}}}$-monochromatic, a contradiction. \qedhere

\end{proof}

We now turn to a proof of Theorem \ref{thm:EARP}. 

\begin{proof}[Proof of $i) \Rightarrow ii)$]

Assume that $G$ is extremely amenable. We first prove that elements of $\Age (\m F)$ are rigid. To do so, consider the set of all linear orderings $LO(\m F)$, seen as a subspace of the space $[2]^{\m F \times \m F}$. The group $G$ acts continuously on this later space via the logic action. The set $LO(\m F)$ is then a $G$-invariant compact subspace. Explicitly, $G$ acts on $LO(\m F)$ as follows: if $ \prec \in LO(\m F)$ and  $g \in G$, then $$ \forall x, y \in \m F \quad x (g\cdot\prec) y \Leftrightarrow g^{-1} (x) \prec g^{-1} (y).$$

By extreme amenability of $G$, there is a $G$-fixed point in $LO(\m F)$, call it $<$. Consider now a finite substructure $\m A \subset \m F$, and let $\varphi$ be an automorphism of $\m A$. By ultrahomogeneity of $\m F$, $\varphi$ extends to an automorphism $\phi$ of $\m F$. Because $<$ is $G$-fixed, it is preserved under $\phi$. Thus, on $A$, $<$ is preserved by $\varphi$, which means that $\varphi$ is trivial on $A$. This proves that $\m A$ is rigid.   

To prove that $\Age (\m F)$ has the Ramsey property, consider $k\in \N$, $\m A$ and $\m B$ in $\Age(\m F)$, and a coloring $$ c : \funct{\binom{\m F}{\m A}}{[k]}.$$ 

Consider the compact space $[k]^{\binom{\m F}{\m A}}$, acted on continuously by $G$ by shift: if $\chi \in [k]^{\binom{\m F}{\m A}}$, $g\in G$ and $\mc A \in \binom{\m F}{\m A}$, then $$ g\cdot \chi (\mc A) = \chi(g^{-1} (\mc A)).$$

The set $\overline{G\cdot c}$ is a $G$-invariant compact subspace. By extreme amenability of $G$, there is a $G$-fixed point in $\overline{G\cdot c}$, call it $c_{0}$. The fact that $c_{0}$ is $G$-fixed means that $c_{0}$ is constant. Consider now the finite set $\binom{\m B}{\m A}$. Because $c_{0} \in \overline{G\cdot c}$, there is $g \in G$ so that $$\restrict{g\cdot c}{\binom{\m B}{\m A}} = \restrict{c_{0}}{\binom{\m B}{\m A}}.$$ 

So $g\cdot c$ is constant on $\binom{\m B}{\m A}$, and $c$ is constant on $\binom{g^{-1}(\m B)}{\m A}$. Because $g^{-1}(\m B)$ is isomorphic to $\m B$, this proves that $\Age (\m F)$ has the Ramsey property.  \qedhere

\end{proof}

\begin{proof}[Proof of $ii)\Rightarrow i)$]

Assume that $\Age (\m F)$ has the Ramsey property and consists of rigid elements. For $A \subset \N$ finite and supporting a substructure $\m A \subset \m F$, the rigidity of $\m A$ implies that the setwise stabilizer of $A$ in $G$ is equal to the pointwise stabilizer $\Stab(A)$ in $G$, and we can identify $G/\Stab (A)$ with a subset of $\binom{\m F}{\m A}$. Moreover, because $\m F$ is ultrahomogeneous, we also have the reverse inclusion. Thus, we can make the identification: $$G/\Stab (A) = \binom{\m F}{\m A}.$$

\begin{prop}

\label{prop:RP0}

Let $k\in \N$, $A\subset \N$ finite and supporting a substructure $\m A$ of $\m F$, and $F\subset G$ finite. Let $\bar f : \funct{G}{[k]}$, constant on elements of $G/\Stab (A) $. Then there exists $g\in G$ such that $\bar f$ is constant on $gF$. 

\end{prop}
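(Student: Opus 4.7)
The plan is to transfer the coloring $\bar f$ of $G$ to a coloring of copies of $\m A$ in $\m F$, and then apply the Ramsey property (in the form of Proposition \ref{prop:RPcompact}) to an appropriately chosen $\m B$.

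First, using the identification $G/\Stab(A) = \binom{\m F}{\m A}$ established just above the statement, and the hypothesis that $\bar f$ is constant on left cosets of $\Stab(A)$, I would factor $\bar f$ through this quotient. Explicitly, define $c : \binom{\m F}{\m A} \to [k]$ by $c(g(\m A)) = \bar f(g)$, where $g(\m A)$ denotes $\restrict{\m F}{g(A)}$; this is well-defined by the hypothesis on $\bar f$. Next, since $F \subset G$ is finite and each $f(A)$ is a finite subset of $\m F$, the set $\bigcup_{f \in F} f(A)$ is finite. Using local finiteness of $\m F$, enlarge it to a finite $B \subset \N$ supporting a substructure $\m B \subset \m F$, so that every $f(A)$ (for $f \in F$) is the universe of a copy $f(\m A) \in \binom{\m B}{\m A}$.

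Now apply Proposition \ref{prop:RPcompact}: since $\Age(\m F)$ has the Ramsey property, $\m F \arrows{(\m B)}{\m A}{k}$, so there exists $\mc B \in \binom{\m F}{\m B}$ such that $c$ is constant on $\binom{\mc B}{\m A}$, say with value $\varepsilon_0 \in [k]$. Fix an isomorphism $\m B \to \mc B$; by ultrahomogeneity of $\m F$, extend it to some $g \in G = \Aut(\m F)$.

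Finally, I would check that $g$ does the job. For every $f \in F$, the copy $f(\m A)$ lies in $\binom{\m B}{\m A}$, so $gf(\m A) = g(f(\m A)) \in \binom{\mc B}{\m A}$. Therefore
\[
\bar f(gf) = c(gf(\m A)) = \varepsilon_0
\]
independently of $f \in F$, which shows that $\bar f$ is constant on $gF$. The argument is essentially a translation exercise, with no serious obstacle beyond being careful about the dictionary $G/\Stab(A) \leftrightarrow \binom{\m F}{\m A}$; the substantive content is exactly the Ramsey property applied to $\m A, \m B$, with the coloring induced by $\bar f$.
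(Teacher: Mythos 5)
Your proof is correct and follows essentially the same route as the paper's: factor $\bar f$ through the identification $G/\Stab(A)=\binom{\m F}{\m A}$ to get a coloring $c$, enclose the finitely many copies $f(\m A)$, $f\in F$, in a finite substructure $\m B$, apply the Ramsey property to get a monochromatic copy $\mc B$, and pull back via an automorphism $g$ sending $\m B$ to $\mc B$. No differences worth noting.
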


\begin{proof}

The map $\bar f$ induces a map $f : \funct{G/\Stab (A)}{[k]}$, which we may think as a $k$-coloring of $\binom{\m F}{\m A}$. Consider the set $ \{ [h] : h \in F\}$. It is a finite set of substructures of $\m F$, all isomorphic to $\m A$. Therefore, we can find a finite substructure $\m B\subset \m F$ large enough so that $$ \{ [h] : h \in F\} \subset \binom{\m B}{\m A}.$$

By Ramsey property, find $\mc B \in \binom{\m F}{\m B}$ so that $f$ is constant on $\binom{\mc B}{\m A}$, with value $i<k$. By ultrahomogeneity of $\m F$, find $g\in G$ so that $g(\m B) = \mc B$. We claim that $g$ is as required. Indeed, for $h \in F$, we have $$ [gh] = g([h]) \in \binom{g(\m B)}{\m A} = \binom{\mc B}{\m A}.$$

So $$ \bar f (gh) = f([gh]) = f(g[h]) = i. \qquad \qedhere$$ \end{proof}

\begin{prop}

\label{prop:RP1}

Let $p\in \N$, $f : \funct{G}{\R^{p}}$ left uniformly continuous and bounded (where $\R^{p}$ is equipped with its standard Euclidean structure), $F\subset G$ finite, $\varepsilon > 0$. Then there exists $g \in G$ such that $$ \forall h, h' \in F \quad \| f(gh) - f(gh')\|<\varepsilon.$$ 

\end{prop}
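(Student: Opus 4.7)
The plan is to reduce from the real-valued case to the finite-coloring case already handled by Proposition \ref{prop:RP0}. The key idea is to discretize $f$ in a way that is compatible with cosets of some $\Stab(A)$, then invoke \ref{prop:RP0}.

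First, I would use left uniform continuity of $f$: there is a basic entourage $U^{L}_{A}$ (for some finite $A\subset \N$) such that $(g,h)\in U^{L}_{A}$ implies $\|f(g)-f(h)\|<\varepsilon/3$. By the local finiteness of $\m F$, I can enlarge $A$ so that it supports a substructure $\m A$ of $\m F$. Recall that $(g,h)\in U^{L}_{A}$ means precisely that $g$ and $h$ agree on $A$, i.e. that $g$ and $h$ lie in the same left coset of $\Stab(A)$. Consequently, $f$ varies by less than $\varepsilon/3$ on every left coset of $\Stab(A)$.

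Next, I would exploit that $f(G)\subset \R^{p}$ is bounded, hence totally bounded: cover it with finitely many sets $B_{1},\ldots,B_{k}$ of diameter less than $\varepsilon/3$. Choose a system of representatives $\{g_{C}\}$ for the left cosets $C$ of $\Stab(A)$, and define $\bar f:G\longrightarrow [k]$ by
\[
\bar f(g) = \min\{ i\leq k : f(g_{[g]})\in B_{i}\},
\]
where $[g]=g\Stab(A)$. By construction, $\bar f$ is constant on left cosets of $\Stab(A)$, so Proposition \ref{prop:RP0} applies: there exists $g\in G$ such that $\bar f$ is constant on the finite set $gF$.

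Finally, I would unwind the approximation. For any $h,h'\in F$, the coset representatives $g_{[gh]}$ and $g_{[gh']}$ satisfy $f(g_{[gh]}),f(g_{[gh']})\in B_{\bar f(gh)}$, hence differ by less than $\varepsilon/3$. On the other hand, $g_{[gh]}$ and $gh$ lie in the same coset of $\Stab(A)$, so $\|f(gh)-f(g_{[gh]})\|<\varepsilon/3$, and similarly for $h'$. Combining by the triangle inequality yields $\|f(gh)-f(gh')\|<\varepsilon$, as required. The only mildly delicate point is making sure that the discretized coloring $\bar f$ is honestly constant on cosets (which is why one chooses representatives once and for all before defining $\bar f$); after that, the rest is a routine $\varepsilon/3$ bookkeeping.
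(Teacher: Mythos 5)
Your proof is correct and follows essentially the same route as the paper's: use left uniform continuity to make $f$ nearly constant on the left cosets of $\Stab(A)$ for a suitable finite $A$ supporting a substructure, use boundedness to replace $f$ by a finite coloring constant on those cosets, apply Proposition \ref{prop:RP0}, and undo the approximation. The only differences are cosmetic ($\varepsilon/3$ versus $\varepsilon/2$ bookkeeping, and passing through an explicit cover of the image rather than directly choosing a finite-range approximant).
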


\begin{proof}

Let $m\in \N$. Note that as subsets of $G$, elements of $G/\Stab ([m]) $ have diameter $1/2^{m+1}$ with respect to the left invariant metric $d_{L}$ on $G$. Thus, by left uniform continuity of $f$, we can find $m\in \N$ large enough so that $f$ is constant up to $\varepsilon/2$ on each element of $G/\Stab ([m])$. By local finiteness of $\m F$, let now $A\subset \N$ finite, supporting a finite substructure $\m A$ of $\m F$, and such that $[m]\subset A$. Then $f$ is also constant up to $\varepsilon/2$ on each element of $G/\Stab (A)$. Because $f$ is also bounded, we can also find $\bar f : \funct{G}{\R^{p}}$ with finite range, constant on elements of $G/\Stab (A) $, and so that $\| f-\bar f\|_{\infty}<\varepsilon/2$. By Proposition \ref{prop:RP0}, there exists $g\in G$ such that $\bar f $ is constant on $gF$. Then $f$ is $\varepsilon$-constant on $gF$.     \qedhere 

\end{proof}

We can now show that $G$ is extremely amenable. Let $G \curvearrowright X$ be a continuous action, with $X$ compact. For $p\in \N$, $\phi : \funct{X}{\R^{p}}$ uniformly continuous and bounded, $F\subset G$ finite, $\varepsilon>0$, set $$ A_{\phi, \varepsilon, F} = \{ x\in X : \forall h \in F \quad \| \phi(h\cdot x) - \phi(x)\| \leq \varepsilon\}.$$ 

The family $(A_{\phi, \varepsilon, F})_{\phi, \varepsilon, F}$ is a family of closed subsets of $X$. We claim that it has the finite intersection property. Indeed, if $\phi_{1},\ldots, \phi_{l}, \varepsilon_{1},\ldots, \varepsilon_{l}, F_{1},\ldots, F_{l}$ are given, take $$f = (\phi_{1},\ldots,\phi_{l}), \quad \varepsilon = \min (\varepsilon_{1}, \ldots, \varepsilon_{l} ), \quad F = F_{1}^{-1}\cup\ldots\cup F_{l}^{-1}\cup \{e\}.$$ 

Fix $x\in X$ and consider the map $f : \funct{G}{\R^{p_{1}+\ldots+p_{l}}}$ defined by $$ \forall g\in G \quad f(g) = (\phi_{1}(g^{-1}\cdot x),\ldots,\phi_{l}(g^{-1}\cdot x))).$$

Because the maps $\phi_{i}$'s are uniformy continuous and the map $g\longmapsto g^{-1}\cdot x$ is left uniformly continuous (cf \cite{Pe}, p40), the map $f$ is left uniformly continuous. By Proposition \ref{prop:RP1}, there exists $g\in G$ so that $$ \forall h, h' \in F \quad \| f(gh) - f(gh')\|<\varepsilon.$$ 

Equivalently, $$\forall i\leq l \quad \forall h, h'\in F \quad \|\phi_{i}(h^{-1}g^{-1}\cdot x) - \phi_{i}(h'^{-1}g^{-1}\cdot x )\| \leq \varepsilon_{i} .$$

Taking $x_{0} = g^{-1}\cdot x$ and $h'=e$, we obtain $$\forall i\leq l \quad \forall h\in F_{i} \quad \|\phi_{i}(h\cdot x_{0}) - \phi_{i}(x_{0} )\| \leq \varepsilon_{i} .$$ 

This proves the finite intersection property of the family $(A_{\phi, \varepsilon, F})_{\phi, \varepsilon, F}$. By compactness of $X$, it follows that this family has a non empty intersection. Consider any element $x$ of this intersection. We claim that $x$ is fixed under the action of $G$: if not, we would find $g\in G$ so that $g\cdot x\neq x$. Then, there would be a uniformly continuous function $\phi_{0} : \funct{X}{[0,1]}$ so that $\phi_{0}(x)=0$ and $\phi_{0}(g\cdot x)= 1$. That would imply $x\notin A_{\phi_{0}, 1/2, \{g\}}$, a contradiction. \qedhere

\end{proof}

\section{Minimality and expansion property}

\label{section:min}

The purpose of this section is to prove Theorem \ref{thm:EP}.

\begin{defn}

\label{def:EP}

Let $\m F$ be a Fra\" iss\'e structure, and $\m F^{*}$ a precompact relational expansion of $\m F$. Say that $\Age(\m F^{*})$ has the \emph{expansion property} relative to $\Age(\m F)$ when for every  $ \m A \in \Age(\m F)$, there exists $ \m B \in \Age(\m F)$ such that $$\forall \m A^{*}, \m B^{*} \in \Age(\m F^{*}) \quad (\restrict{\m A^{*}}{L} = \m A \ \  \wedge \ \ \restrict{\m B^{*}}{L} = \m B) \Rightarrow \m A^{*} \leq \m B^{*}. $$
\end{defn}

When $\m A$ and $\m B$ are as above, we say that $\m B$ has the expansion property relative to $\m A$. Note that because $\Age (\m F)$ has the joint embedding property, the expansion property is equivalent to: $$\forall \m A^{*} \in \Age(\m F ^{*}) \quad \exists \m B \in \Age(\m F) \quad \forall \m B^{*} \in \Age(\m F^{*}) \quad (\restrict{\m B^{*}}{L} = \m B) \Rightarrow \m A^{*} \leq \m B^{*}. $$

For a fixed $\m A^{*} \in \Age(\m F^{*})$, any $\m B \in \Age(\m F)$ witnessing this property will also be said to have the expansion property relative to $\m A^{*}$. 

\

Here is a concrete example: consider the structure $\Q _2$ defined as $(\Q , Q_0 ,Q_1,<)$ where $\Q$ denotes the rationals, $<$ denotes the usual ordering on $\Q$, and $Q_0$, $Q_{1}$ are dense subsets of $\Q$. The appropriate language is $(P_{0}, P_{1}, <)$, made of two unary relations symbols and one binary relation symbol. The structure $\Q_{2}$ will play the role of $\m F^{*}$. For $\m F$, simply take the reduct $(\Q , Q_0 ,Q_1)$. Then $\Age(\m F^{*})$ is the class of all structures of the form $\m A =(A, P^{\m A}_{0}, P^{\m A} _{1}, <^{\m A})$ where $P^{\m A}_{0}, P^{\m A} _{1}$ partitions $A$ and $<^{\m A}$ is a linear ordering on $A$. This age does not have the expansion property relative to $\Age(\m F)$: if $\m A^{*} \in \Age(\m F^{*})$ is such that some element of $P^{\m A} _{1}$ is less than some element of $P^{\m A} _{0}$ and $\m B\in \Age(\m F)$, then there is an expansion $\m B^{*}$ of $\m B$ such that all elements of $P^{\m B} _{0}$ are less than those of $P^{\m B} _{1}$. In particular, $\m A^{*}$ does not embed in $\m B^{*}$. Thus, $\m B$ does not have the expansion property relative to $\m A^{*}$. However, if we consider only the class $\mathcal K$ of those elements $\m A^{*}$ of $\Age(\m F^{*})$ so that all elements of $P^{\m A} _{0}$ are less than those of $P^{\m A} _{1}$, then it is easy to see that the expansion property holds.

\setcounter{thmm}{3}

\begin{thmm}

Let $\m F$ be a Fra\" iss\'e structure, and $\m F^{*}$ precompact relational expansion of $\m F$. The following are equivalent: 

\begin{enumerate}

\item[i)] The flow $G\curvearrowright X^{*}$ is minimal. 

\item[ii)] $\Age(\m F^{*})$ has the expansion property relative to $\Age(\m F)$. 

\end{enumerate}

\end{thmm}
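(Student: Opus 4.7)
The plan is to first establish the intrinsic characterization
\[
X^{*} = \{\vec T \in P^{*} : \text{every finite substructure of }(\m F, \vec T)\text{ lies in }\Age(\m F^{*})\}.
\]
The inclusion ``$\subset$'' is immediate: if $g_n \cdot \vec R^{*} \to \vec T$ in $d^{P^{*}}$, then $\restrict{\vec T}{A} = \restrict{g_n \cdot \vec R^{*}}{A}$ for large $n$, and this expansion of $\restrict{\m F}{A}$ is isomorphic via $g_n$ to a finite substructure of $\m F^{*}$. For ``$\supset$'', fix an exhaustion $\m F = \bigcup_n \m F_n$ by finite substructures; for each $n$, choose an $L^{*}$-embedding $\iota_n : \restrict{(\m F, \vec T)}{\m F_n} \hookrightarrow \m F^{*}$ and extend its $L$-reduct (an embedding of $\m F_n$ into $\m F$) to some $h_n \in G$ using ultrahomogeneity of $\m F$; one then checks $h_n^{-1} \cdot \vec R^{*} \to \vec T$. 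This characterization translates minimality of $G \curvearrowright X^{*}$ into the combinatorial statement that for all $\vec S, \vec T \in X^{*}$ and every finite substructure $\m A \subset \m F$, there exists $g \in G$ with $\restrict{g \cdot \vec S}{A} = \restrict{\vec T}{A}$.

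For (ii) $\Rightarrow$ (i), set $\m A^{*} := \restrict{(\m F, \vec T)}{A} \in \Age(\m F^{*})$ and use the expansion property to find $\m B \in \Age(\m F)$ such that every expansion of $\m B$ in $\Age(\m F^{*})$ embeds $\m A^{*}$. Realize $\m B$ on some $B \subset \m F$; then $\restrict{(\m F, \vec S)}{B}$ is such an expansion, so there is an $L^{*}$-embedding $\phi : \m A^{*} \hookrightarrow \restrict{(\m F, \vec S)}{B}$ with image on some $C \subset B$. The underlying $L$-map is an isomorphism between two finite substructures of $\m F$; extending it to $g \in G$ via ultrahomogeneity of $\m F$ and unwinding the logic action yields $\restrict{g \cdot \vec S}{A} = \restrict{\vec T}{A}$. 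Conversely, for (i) $\Rightarrow$ (ii) I argue by contradiction: if the expansion property fails at some $\m A^{*} \in \Age(\m F^{*})$, then every finite substructure of $\m F$ admits an expansion in $\Age(\m F^{*})$ omitting $\m A^{*}$. Build the tree whose level-$n$ nodes are expansions of $\m F_n$ in $\Age(\m F^{*})$ omitting $\m A^{*}$, with parent given by restriction; the tree is finitely branching by Proposition \ref{prop:finite} and every level is nonempty, so K\"onig's lemma supplies a coherent sequence $(\m F_n^{*})$ whose union defines $\vec T \in P^{*}$ whose age omits $\m A^{*}$ and which therefore lies in $X^{*}$. Fixing some $A_0 \subset \m F$ that supports a copy of $\m A^{*}$ in $\m F^{*}$ and invoking minimality, we find $g \in G$ with $\restrict{g \cdot \vec T}{A_0} = \restrict{\vec R^{*}}{A_0}$, which plants a copy of $\m A^{*}$ inside $(\m F, \vec T)$ on $g^{-1}(A_0)$, the desired contradiction.

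The main delicacy is the initial characterization of $X^{*}$: converting the $L^{*}$-age condition on $\vec T$ into actual approximability by $G$-translates of $\vec R^{*}$ hinges on the observation that only ultrahomogeneity of $\m F$ (not of $\m F^{*}$, which is not assumed Fra\"iss\'e in this theorem) is required, since one merely needs to extend the $L$-reduct of each finite embedding to an automorphism of $\m F$. Once this is granted, both implications reduce to short combinatorial arguments, with K\"onig's lemma on a finite-branching tree---made available by the precompactness hypothesis through Proposition \ref{prop:finite}---supplying the compactness needed in (i) $\Rightarrow$ (ii).
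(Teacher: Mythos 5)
Your proof is correct and follows essentially the same route as the paper: your characterization of $X^{*}$ via ages is the paper's Proposition \ref{prop:age}, and your (ii)$\Rightarrow$(i) direction matches the forward half of Proposition \ref{prop:EPcompact} combined with that characterization. The only cosmetic difference is in (i)$\Rightarrow$(ii), where you run the compactness step as a K\"onig's-lemma argument by contradiction on the tree of expansions, whereas the paper extracts a finite subcover of the compact space $X^{*}$ by the sets $X_{C}$ and thereby produces an explicit witness $\m C$ for the expansion property; the two are interchangeable.
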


\noindent \textbf{Remark:} The structure $\m F^{*}$ does not have to be Fra\"iss\'e for this result to apply.

\

%\begin{prop}
%
%Let $\m F$ be a Fra\" iss\'e structure, and $\m F^{*}$ a precompact relational expansion of $\m F$. Assume that the flow $G\curvearrowright \overline{G\cdot\vec{R^{*}}}$ is minimal. Then $\Age(\m F^{*})$ has the expansion property relative to $\Age(\m F)$. 
%
%\end{prop}

From now on, we fix $\m F$ a Fra\" iss\'e structure and $\m F^{*}$ a precompact relational expansion of $\m F$.

\begin{prop}

\label{prop:age}

Let $\vec S, \vec T\in P^*$. Then $\vec S \in \overline{G\cdot \vec T}$ iff $\Age(\m F, \vec S) \subset \Age(\m F, \vec T)$. 

\end{prop}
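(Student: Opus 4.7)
The statement equates a topological condition (closure in the metric space $P^*$) with a purely combinatorial one (age inclusion). Since the metric $d^{P^*}$ measures agreement of expansions on initial segments $[m]$, the plan is to translate ``$\vec S \in \overline{G\cdot\vec T}$'' into ``for every finite $A\subset\N$ supporting a substructure of $\m F$, some $g\in G$ makes $g\cdot\vec T$ agree with $\vec S$ on $A$,'' and then argue that each such agreement corresponds exactly to realizing a member of $\Age(\m F, \vec S)$ inside $(\m F, \vec T)$. The transition between the two viewpoints is handled by ultrahomogeneity of $\m F$.

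For the forward implication, I would fix an arbitrary $\m A^{*} \in \Age(\m F, \vec S)$ on a finite universe $A$, pick $m$ with $A\subset[m]$, and use the assumption to produce $g \in G$ with $\restrict{(g\cdot\vec T)}{[m]} = \restrict{\vec S}{[m]}$. Then $\restrict{(\m F, g\cdot\vec T)}{A} = \m A^{*}$, and since $g$ is an $L$-automorphism of $\m F$ realising an isomorphism $(\m F, \vec T)\cong (\m F, g\cdot\vec T)$ in the expanded language by the very definition of the logic action, the preimage $g^{-1}(A)$ supports a copy of $\m A^{*}$ inside $(\m F, \vec T)$. Hence $\m A^{*} \in \Age(\m F, \vec T)$.

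For the converse, fix $m\in\N$ and choose a finite $A\supset[m]$ that supports a substructure of $\m F$ (possible by local finiteness). Let $\m A^{*} := \restrict{(\m F, \vec S)}{A}$. By hypothesis $\m A^{*} \in \Age(\m F, \vec T)$, so there is an embedding $\phi\colon \m A^{*}\hookrightarrow (\m F, \vec T)$; in particular $\phi\colon \restrict{\m F}{A}\to \restrict{\m F}{\phi(A)}$ is an $L$-isomorphism. Invoking the ultrahomogeneity of $\m F$, extend $\phi^{-1}$ to some $g\in G$. A direct unpacking of the logic action then shows that for every $i\in I$ and every $\vec y\in A^{a(i)}$, $(g\cdot T_i)(\vec y) \Leftrightarrow T_i(\phi(\vec y)) \Leftrightarrow S_i(\vec y)$, the last equivalence being the fact that $\phi$ is an $L^{*}$-embedding. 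Thus $g\cdot \vec T$ agrees with $\vec S$ on $A$, whence $d^{P^{*}}(g\cdot\vec T, \vec S)\leq 1/2^{m}$, giving $\vec S \in \overline{G\cdot \vec T}$.

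The only delicate step is the bookkeeping in the converse direction: one has to verify carefully that the chosen $g\in G$, obtained from $\phi^{-1}$ via ultrahomogeneity, actually transports the relations of $\vec T$ correctly onto those of $\vec S$ on $A$. This is essentially a matter of checking that $g^{-1}\restrict{}{A} = \phi$ and substituting into the definition $g\cdot T_i(\vec y)\Leftrightarrow T_i(g^{-1}\vec y)$; once this is set up, everything else is routine.
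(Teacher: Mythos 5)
Your proof is correct and follows essentially the same route as the paper: the forward direction reduces agreement on $[m]$ to realizing $\restrict{(\m F,\vec S)}{A}$ inside $(\m F,\vec T)$ via the isomorphism $(\m F,\vec T)\cong(\m F,g\cdot\vec T)$, and the converse extends the inverse of an embedding of $\restrict{(\m F,\vec S)}{A}$ into $(\m F,\vec T)$ to an automorphism of $\m F$ by ultrahomogeneity, exactly as in the paper. The bookkeeping you flag ($g^{-1}$ restricted to $A$ equals $\phi$) is handled the same way there, so nothing further is needed.
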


\begin{proof}

Assume $\vec S \in \overline{G\cdot \vec T}$. Because $(\m F, g\cdot \vec T) \cong (\m F, \vec T)$ via $g$ for any $g\in G$, it suffices to show that for every finite set $A\subset \N$ supporting a substructure of $\m F$, there exists $g\in G$ so that $$\restrict{(\m F, g\cdot \vec T)}{A} = \restrict{(\m F, \vec S)}{A}.$$ 

But this is clearly implied by $\vec S\in \overline{G\cdot \vec T}$.

Conversely, consider a basic open neighborhood around $\vec S$ in $P^*$. By refining it, we may assume that it is given by a finite set $A\subset \N$ supporting a substructure of $\m F$. Because $\Age(\m F, \vec S) \subset \Age(\m F, \vec T)$, we can find $C \subset \N$ finite such that $$\restrict{(\m F, \vec T)}{C}\cong \restrict{(\m F, \vec S)}{A}.$$ 

Let $g:\funct{C}{A}$ witness this isomorphism. In particular, $g$ is an isomorphism between finite substructures of $\m F$. It can therefore be extended to some element $\hat g$ of $G$. Then $$\forall i \in I \quad \forall y_{1}\ldots y_{a(i)} \in A \quad S_{i}(y_{1}\ldots y_{a(i)}) \Leftrightarrow T_{i}(\hat g^{-1}(y_{1})\ldots \hat g^{-1}( y_{a(i)})).$$ 

Hence, $$\restrict{(\m F, \hat g\cdot \vec T)}{A} = \restrict{(\m F, \vec S)}{A}. \qquad \qedhere$$ 

\end{proof}

As it was the case for the Ramsey property, we now prove that the expansion property can be witnessed on $\m F$, as opposed to some finite substructure: 

\begin{prop}

\label{prop:EPcompact}

The class $\Age(\m F^{*})$ has the expansion property relative to $\Age(\m F)$ iff $\Age(\m F^{*}) \subset \Age(\m F, \vec S)$ for every $\vec S \in X^{*}$.
\end{prop}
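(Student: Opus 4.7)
The plan is to prove the two implications separately, the key inputs being Proposition \ref{prop:age} (which identifies $X^*$ with those $\vec S \in P^*$ satisfying $\Age(\m F, \vec S) \subset \Age(\m F^*)$) and ultrahomogeneity of $\m F$.

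For $(\Rightarrow)$, I will fix $\vec S \in X^*$ and $\m A^* \in \Age(\m F^*)$, and show $\m A^* \in \Age(\m F, \vec S)$. Applying the expansion property produces some $\m B \in \Age(\m F)$ witnessing the expansion property relative to $\m A^*$. After embedding $\m B$ into $\m F$ on some universe $B \subset \N$, consider $\restrict{(\m F, \vec S)}{B}$. This is an expansion of $\m B$, it lies in $\Age(\m F, \vec S)$, and Proposition \ref{prop:age} (applied with $\vec T = \vec R^*$) gives $\Age(\m F, \vec S) \subset \Age(\m F^*)$. So the expansion property forces $\m A^* \leq \restrict{(\m F, \vec S)}{B}$, hence $\m A^* \in \Age(\m F, \vec S)$.

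For $(\Leftarrow)$, I will argue contrapositively. Suppose the expansion property fails, with witness $\m A^* \in \Age(\m F^*)$: for every $\m B \in \Age(\m F)$, some expansion $\m B^* \in \Age(\m F^*)$ of $\m B$ satisfies $\m A^* \not\leq \m B^*$. Apply this to $\m B_m$, the substructure of $\m F$ on $[m]$, obtaining a bad expansion $\m B_m^*$ for each $m \in \N$. The main technical point is the realization step: since $\m B_m^*$ embeds in $\m F^*$ via some $\iota_m: [m] \to \N$, both $[m]$ and $\iota_m([m])$ support $L$-substructures of $\m F$, isomorphic via $\iota_m$, and ultrahomogeneity of $\m F$ yields $g_m \in G$ with $g_m \circ \iota_m = \mathrm{id}_{[m]}$. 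Unfolding the definition of the $G$-action on $P^*$, one checks $\restrict{(\m F, g_m \cdot \vec R^*)}{[m]} = \m B_m^*$.

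To finish, set $\vec S_m = g_m \cdot \vec R^* \in G \cdot \vec R^*$. By Proposition \ref{prop:finite}, $G \cdot \vec R^*$ is precompact in $(P^*, d^{P^*})$, so a subsequence $\vec S_{m_j}$ converges to some $\vec S \in X^*$. I claim $\m A^* \notin \Age(\m F, \vec S)$: otherwise, some copy of $\m A^*$ would sit inside $\restrict{(\m F, \vec S)}{[k]}$ for a suitable $k$; convergence in $d^{P^*}$ would give $\restrict{\vec S_{m_j}}{[k]} = \restrict{\vec S}{[k]}$ for large $j$, so that as soon as $m_j \geq k$ one would have $\m A^* \leq \restrict{(\m F, \vec S_{m_j})}{[k]} \subset \restrict{(\m F, \vec S_{m_j})}{[m_j]} = \m B_{m_j}^*$, contradicting the choice of $\m B_{m_j}^*$. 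The main obstacle is really the realization step: once one knows how to use ultrahomogeneity of $\m F$ to convert each abstract bad expansion into a concrete member of the $G$-orbit, precompactness of that orbit finishes the job routinely.
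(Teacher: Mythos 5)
Your proof is correct. The forward implication is identical to the paper's: pick $\m B$ with the expansion property relative to $\m A^{*}$, note via Proposition \ref{prop:age} that $\restrict{(\m F,\vec S)}{B}$ is an expansion of $\m B$ lying in $\Age(\m F^{*})$, and conclude. For the converse, however, you take a genuinely different route. The paper argues directly: for a fixed $\m A^{*}$ it covers $X^{*}$ by the (open) sets $X_{C}=\{\vec S : \m A^{*}\cong \restrict{(\m F,\vec S)}{C}\}$, extracts a finite subcover $X_{C_{1}},\dots,X_{C_{k}}$ by compactness, and shows that the substructure generated by $\bigcup_{j}C_{j}$ is an explicit witness to the expansion property (the realization step --- extending an embedding $\m C^{*}\to\m F^{*}$ to some $g\in G$ by ultrahomogeneity of $\m F$ --- appears there too). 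You instead argue contrapositively: from a failing witness $\m A^{*}$ you realize a ``bad'' expansion $\m B_{m}^{*}$ of each $[m]$ inside the orbit $G\cdot\vec R^{*}$ and use sequential compactness of $X^{*}$ to extract a limit $\vec S$ omitting $\m A^{*}$. Both arguments rest on the same two pillars (compactness of $X^{*}$ and ultrahomogeneity of $\m F$); the paper's version has the advantage of producing the witness $\m C$ explicitly, while yours makes transparent why a bad point of $X^{*}$ must exist when the property fails. Two cosmetic points: when $L$ has function symbols you should work with the substructure \emph{generated} by $[m]$ (resp.\ $[k]$) rather than $[m]$ itself, as the paper does; and your opening step, reducing the failure of the expansion property to a single witness $\m A^{*}$ uniform in $\m B$, is exactly the equivalent reformulation recorded after Definition \ref{def:EP} (it uses joint embedding together with precompactness), so it deserves an explicit citation.
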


\begin{proof}

Assume that $\Age(\m F^{*})$ has the expansion property relative to $\Age(\m F)$. Fix $ \m A^{*} \in \Age(\m F ^{*})$ and consider $\m B \in \Age(\m F)$ with the expansion property relative to $\m A^{*}$. Take now $\vec S \in X^{*}$. Then by Proposition \ref{prop:age}, $\restrict{(\m F, \vec S)}{B}\in \Age(\m F^{*})$, and because $\m B$ has the expansion property relative to $\m A^{*}$: $$\m A^{*} \leq \restrict{(\m F, \vec S)}{B}.$$

Therefore, $\m A^{*}\leq (\m F,\vec S)$ and $\Age(\m F^{*}) \subset \Age(\m F, \vec S)$. 

Conversely, assume that $\m F^{*}$ is Fra\"iss\'e and that $\Age(\m F^{*}) \subset \Age(\m F, \vec S)$ for every $\vec S \in X^{*}$. Let $\m A^{*} \in \Age(\m F^{*})$. For $C\subset \N$ finite and supporting a substructure of $\m F$, let $$X_{C} = \{ \vec S\in X^{*} : \m A^{*} \cong \restrict{(\m F, \vec S)}{C}\}.$$ 

Then, because $\Age(\m F^{*}) \subset \Age(\m F, \vec S)$ for every $\vec S \in X^{*}$, we have: $$X^{*} = \bigcup_{C\subset\N} X_{C}.$$ 

Compactness of $X^{*}$ allows then to find finite sets $C_{1},\ldots,C_{k}\subset \N$ such that $$ X^{*} = \bigcup_{j=1} ^{k} X_{C_{j}}.$$

Let $\m C$ denote the substructure of $\m F$ generated by $\bigcup_{j=1} ^{k} C_{j}$, and write $C$ the finite subset of $\N$ supporting it. We claim that $\m C$ has the expansion property relative to $\m A^{*}$: let $\m C^{*}$ be an expansion of $\m C$ in $\Age(\m F^{*})$. Consider an embedding $\phi : \funct{\m C^{*}}{\m F ^{*}}$. It induces an embedding between finite substructures of $\m F$ and can be extended to some element $g\in G$. Then, for every $i \in I$ and every $y_{1}\ldots y_{a(i)} \in C$, $$R^{\m C^{*}}_{i}(y_{1}\ldots y_{a(i)}) \Leftrightarrow R^{*}_{i}(g(y_{1})\ldots g( y_{a(i)}))\Leftrightarrow g^{-1}\cdot R^{*}_{i}(y_{1}\ldots y_{a(i)}).$$  

Therefore, setting $S_{i} = g^{-1}\cdot R^{*}_{i}$ for every $i\in I$, we obtain $\m C^{*} \cong \restrict{(\m F, \vec S)}{C}$. Now, $\vec S \in X_{C_{l}}$ for some $l\leq k$. So $$ \m A^{*} \cong \restrict{(\m F, \vec S)}{C_{l}} \subset \restrict{(\m F, \vec S)}{C}\cong \m C^{*}. \qquad \qedhere$$
\end{proof}

We now turn to a proof of Theorem \ref{thm:EP}. Minimality of $G\curvearrowright X^{*}$ is equivalent to $\vec R^{*} \in  \overline{G\cdot \vec S}$ for every $\vec S\in X^{*}$, which is in turn equivalent to (cf Proposition \ref{prop:age}) $\Age(\m F^{*}) \subset \Age(\m F, \vec S)$ for every $\vec S \in X^{*}$. Now, apply Proposition \ref{prop:EPcompact}.

\section{Universal minimal flows}

\label{section:UMF}

In this section, we prove Theorem \ref{thm:UMF}:

\begin{thmm}

\label{thm:UMF}

Let $\m F$ be a Fra\" iss\'e structure, and $\m F^{*}$ be a Fra\"iss\'e precompact relational expansion of $\m F$. Assume that $\Age(\m F^{*})$ consists of rigid elements. The following are equivalent: 

\begin{enumerate}

\item[i)] The flow $G\curvearrowright X^{*}$ is the universal minimal flow of $G$. 

\item[ii)] The class $\Age(\m F^{*})$ has the Ramsey property as well as the expansion property relative to $\Age(\m F)$. 

\end{enumerate}

\end{thmm}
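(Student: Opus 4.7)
The proof naturally splits into the two implications, and the strategy rests on Theorems \ref{thm:EARP} and \ref{thm:EP}. The rigidity hypothesis together with the Ramsey property for $\Age(\m F^{*})$ is, via Theorem \ref{thm:EARP} applied to $\m F^{*}$, equivalent to extreme amenability of $G^{*} = \Aut(\m F^{*})$; and the expansion property is equivalent to minimality of $G\curvearrowright X^{*}$ by Theorem \ref{thm:EP}. So the whole theorem reduces to showing the equivalence between $G\curvearrowright X^{*}$ being the universal minimal flow, and the conjunction of $G^{*}$ being extremely amenable with $G\curvearrowright X^{*}$ being minimal.

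For $ii)\Rightarrow i)$, minimality is immediate, and what remains is universality. Given a minimal compact $G$-flow $Y$, restrict the action to $G^{*}$; since $G^{*}$ is extremely amenable and $Y$ is compact, there is a point $y^{*}\in Y$ fixed by $G^{*}$. Since the stabilizer of $\vec R^{*}$ in $G$ is exactly $G^{*}$, the formula $\pi_{0}(g\cdot \vec R^{*}) := g\cdot y^{*}$ defines a well-defined $G$-equivariant map $G\cdot \vec R^{*}\longrightarrow Y$. Standard topological group theory says that $g\longmapsto g\cdot y^{*}$ is right uniformly continuous because $Y$ is compact. By Proposition \ref{prop:unif}, the right uniformity on $G/G^{*}$ coincides with the metric uniformity inherited from $P^{*}$, so $\pi_{0}$ is uniformly continuous on $G\cdot \vec R^{*}$ viewed inside $P^{*}$. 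Completeness of $Y$ (being compact Hausdorff) then yields a continuous extension $\pi : X^{*}\longrightarrow Y$; continuity and density make $\pi$ $G$-equivariant, and minimality of $Y$ makes it onto.

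For $i)\Rightarrow ii)$, the expansion property is immediate from minimality via Theorem \ref{thm:EP}. The hard part is the Ramsey property, which by Theorem \ref{thm:EARP} together with the rigidity hypothesis reduces to showing that $G^{*}$ is extremely amenable. Given any continuous action $G^{*}\curvearrowright K$ on a compact space $K$, the plan is to induce a compact $G$-flow $\widetilde K$ out of it, use universality of $M(G)=X^{*}$ to get a $G$-map $\pi : X^{*}\longrightarrow Y$ into some minimal subflow $Y\subset \widetilde K$, and then read off a $G^{*}$-fixed point of $K$ from the image of $\vec R^{*}$. Concretely, $\widetilde K$ is a space of $G^{*}$-equivariant maps $f:G\longrightarrow K$ satisfying $f(gh)=h^{-1}\cdot f(g)$ for $h\in G^{*}$, with $G$ acting by left translation; since $\vec R^{*}$ is $G^{*}$-fixed in $X^{*}$, its image $\pi(\vec R^{*})\in \widetilde K$ is $G^{*}$-fixed for the left translation action, and unravelling the equivariance condition at the identity shows that $\pi(\vec R^{*})(e)\in K$ is a $G^{*}$-fixed point.

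The main obstacle I expect is making the induced flow construction rigorous. In the non-locally-compact topological group setting, the topology on $\widetilde K$ must be fine enough to yield a jointly continuous $G$-action, yet coarse enough to preserve compactness; pointwise topology gives compactness but typically fails joint continuity, while uniform topologies risk losing compactness. The precompactness hypothesis on $\m F^{*}$ is what rescues the argument: by Proposition \ref{prop:unif} and the corollary to it, $G/G^{*}$ is a precompact uniform space with completion $X^{*}$, so $G^{*}$-equivariant right uniformly continuous maps $G\longrightarrow K$ correspond bijectively to continuous maps $X^{*}\longrightarrow K$ carrying $\vec R^{*}$ to the appropriate fiber. This lets $\widetilde K$ be organized as a subset of $C(X^{*},K)$ with an Arzel\`a--Ascoli-type compactness theorem doing the technical work, and it is precisely this identification that I expect to occupy the bulk of the technical effort.
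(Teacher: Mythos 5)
Your proof of $ii)\Rightarrow i)$ is essentially the paper's: take a $G^{*}$-fixed point $\xi$ in the given minimal flow, note that $g\mapsto g\cdot\xi$ is right uniformly continuous and constant on $G^{*}$-cosets, and extend through the completion $X^{*}=\widehat{G/G^{*}}$ via Proposition \ref{prop:unif}. That direction is fine, as is the reduction of the expansion property in $i)\Rightarrow ii)$ to Theorem \ref{thm:EP}.

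The gap is in your treatment of the Ramsey property in $i)\Rightarrow ii)$. Reducing it to extreme amenability of $G^{*}$ is legitimate, but the co-induction you sketch does not go through. An element $f$ of $\widetilde K$ satisfies $f(gh)=h^{-1}\cdot f(g)$ for $h\in G^{*}$: it is \emph{equivariant}, not invariant, so it is constant on the coset $gG^{*}$ only when $f(g)$ is already $G^{*}$-fixed. Such maps therefore do not factor through $G/G^{*}$, and the proposed identification of $\widetilde K$ with a subset of $C(X^{*},K)$ has no content --- over a point of the completion $X^{*}$ that is not in the orbit $G\cdot\vec R^{*}$, the ``appropriate fiber'' is not even defined, and no Arzel\`a--Ascoli argument can start without an equicontinuity hypothesis you have no access to. Worse, it is unclear that a topologically reasonable $\widetilde K$ is nonempty at all: the only obvious elements are the constant maps, and a constant map lies in $\widetilde K$ precisely when its value is $G^{*}$-fixed, which is what you are trying to prove. (The statement you are aiming at --- that universality of $\widehat{G/G^{*}}$ forces extreme amenability of the co-precompact closed subgroup $G^{*}$ --- is in fact true, but the known proofs are substantially harder and postdate this paper; co-induction for non-locally-compact Polish groups is exactly where they have to work.)

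The paper sidesteps co-induction entirely by applying universality only to flows that are already compact $G$-flows, namely the coloring flows $\overline{G\cdot c}\subset [k]^{\binom{\m F}{\m A}}$ for $\m A\in\Age(\m F)$. A $G$-equivariant $\pi:X^{*}\to X$ onto a minimal subflow sends the $G^{*}$-fixed point $\vec R^{*}$ to a $G^{*}$-fixed coloring $\gamma$, which consequently depends only on the isomorphism type in $\m F^{*}$ of a copy of $\m A$ and so takes at most $t(\m A)$ values, $t(\m A)$ being the number of expansions of $\m A$ in $\Age(\m F^{*})$. This bounds the Ramsey degree of $\m A$ in $\Age(\m F)$ by $t(\m A)$, and a purely combinatorial argument (Proposition \ref{prop:Rdeg}), which crucially uses the expansion property you have already secured, converts these Ramsey-degree bounds into the Ramsey property for $\Age(\m F^{*})$. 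That detour --- proving the Ramsey property directly rather than extreme amenability of $G^{*}$ --- is what your argument is missing.
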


\begin{proof}[Proof of $ii) \Rightarrow i)$]

Let $G\curvearrowright X$ be a compact minimal $G$-flow. It induces a continuous action $G^{*}\curvearrowright X$. By Theorem \ref{thm:EARP}, since $\Age(\m F^{*})$ has the Ramsey property and consists of rigid elements, $G^{*}$ is extremely amenable and $G^{*}\curvearrowright X$ has a $G^{*}$-fixed point, call it $\xi$. Let $p : \funct{G}{X}$ be defined by $$p(g)=g\cdot\xi$$ 

Then $p$ is right-uniformly continuous (cf \cite{Pe}, p.40) and is constant on elements of $G/G^{*}$: if $g^{-1}h\in G^{*}$, then $g^{-1}h\cdot \xi = \xi$ so $h\cdot\xi = g\cdot\xi$ ie $p(h)=p(g)$. So $p$ induces a right-uniformly continuous map $q : \funct{G/G^{*}}{X}$, which can be extended continuously to $\pi : \funct{X^{*}}{X}$ (we use here that $X^{*}$ is the completion of $G/G^{*}$ because $\m F^{*}$ is Fra\"iss\'e). This map is $G$-equivariant: this follows from continuity and from the fact that $q$ is $G$-equivariant. It is also onto because its range is a compact subset of $X$ containing the dense subset $G\cdot\xi$. \qedhere

\end{proof}

\begin{proof}[Proof of $i) \Rightarrow ii)$]

Because of minimality of $G\curvearrowright X^{*}$, Theorem \ref{thm:EP} ensures that $\Age(\m F^{*})$ has the expansion property relative to $\Age(\m F)$. To prove the Ramsey property of $\Age(\m F^{*})$, we are going to use the so-called \emph{Ramsey degrees}. Recall that the arrow relation $$ \m C \arrows{(\m B)}{\m A}{k, l}$$ means that for every map $c: \funct{\binom{\m C}{\m A}}{[k]}$, there is $\mc B \in \binom{\m C}{\m B}$ such that $c$ takes at most $l$-many values on $\binom{\mc B}{\m A}$.

\begin{defn}

For $\m A \in \Age(\m F)$, say that it has a \emph{finite Ramsey degree in $\Age(\m F)$} when there exists $l\in \N$ such that for every $k\in \N$ and $\m B \in \Age(\m F)$, there is $\m C\in  \Age(\m F)$ so that $$ \m C\arrows{(\m B)}{\m A}{k, l}.$$ 

The smallest possible value for $l$ is then the \emph{Ramsey degree of $\m A$ in $\Age(\m F)$}. 

\end{defn}

Here, we are going to show that every $\m A\in \Age(\m F)$ has a finite Ramsey degree, whose value is at most the number $t(\m A)$ of non-isomorphic expansions of $\m A$ in $\Age(\m F^{*})$. By compactness, it is enough to show that for every $k\in \N$ and $\m B \in \Age(\m F)$, $$ \m F\arrows{(\m B)}{\m A}{k, t(\m A)}.$$ 

Let $c:\funct{\binom{\m F}{\m A}}{[k]}$. Then $\overline{G\cdot c}$ is a compact subflow of $G\curvearrowright [k]^{\binom{\m F}{\m A}}$. As such, it admits a minimal subflow $X\subset \overline{G\cdot c}$. By i), there is a continuous $G$-equivariant map $\pi : X^{*} \twoheadrightarrow X$.

Recall that because $\m F^{*}$ is Fra\"iss\'e, $X^{*}$ is the completion of $\widehat{G/G^{*}}$ and so letting $\gamma := \pi ([e])$, we obtain that $\gamma$ is $G^{*}$-fixed. This implies that all copies of $\m A$ in $\m F$ that support isomorphic structures in $\m F^{*}$ have same $\gamma$-color, and that $\gamma$ takes no more than $t(\m A)$ values on $\binom{\m F}{\m A}$. Now, consider $\m B$. Because it is in $\Age(\m F)$, we may assume that it is supported by a finite subset $B$ of $\N$. Since $\gamma\in \overline{G\cdot c}$, we can find $g\in G$ such that $$\restrict{g\cdot c}{\binom{\m B}{\m A}} = \restrict{\gamma}{\binom{\m B}{\m A}}.$$

So $g\cdot c$ takes no more than $t(\m A)$ values on $\binom{\m B}{\m A}$, and so does $c$ on $\binom{g^{-1}(\m B)}{\m A}$. This proves that 
$\m A \in \Age(\m F)$ has a finite Ramsey degree in $\Age(\m F)$ at most equal to $t(\m A)$. To prove that $\Age(\m F^{*})$ has the Ramsey property, we will use the following general fact: 

\begin{prop}

\label{prop:Rdeg}

Let $\mathcal K^{*}$ be an expansion of $\Age(\m F)$ in $L^{*}$ satisfying the hereditarity property, the joint embedding property, the expansion property relative to $\Age(\m F)$, and such that every $\m A \in \Age(\m F)$ has a finite Ramsey degree in $\Age(\m F)$ whose value is at most the number of non-isomorphic expansions of $\m A$ in $\mathcal K^{*}$. Then $\mathcal K^{*}$ has the Ramsey property.  

\end{prop}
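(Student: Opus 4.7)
The plan is to derive the Ramsey property of $\mathcal{K}^{*}$ from the Ramsey-degree estimate on $\Age(\m F)$ by cooking up a two-level auxiliary coloring that simultaneously records the value of the given coloring $c$ and the isomorphism type of the induced expansion. The $t(\m A)$ colors permitted by the Ramsey degree will then be almost entirely consumed by the distinct expansion types, leaving only one available color for $c$ on the copies of $\m A^{*}$.

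Concretely, I would begin with $\m A^{*}, \m B^{*} \in \mathcal K^{*}$ and $k\in \N$, letting $\m A, \m B$ be their $L$-reducts and enumerating the $t := t(\m A)$ non-isomorphic expansions of $\m A$ in $\mathcal K^{*}$ as $\m A^{*}_{1} = \m A^{*}, \m A^{*}_{2}, \ldots, \m A^{*}_{t}$. Iterating the joint embedding property of $\mathcal K^{*}$ produces $\m E^{*} \in \mathcal K^{*}$ containing copies of $\m B^{*}$ and of every $\m A^{*}_{j}$; call its reduct $\m E$. Applying the expansion property of $\mathcal K^{*}$ relative to $\Age(\m F)$ to $\m E$ then yields $\m B' \in \Age(\m F)$ such that every $\mathcal K^{*}$-expansion of $\m B'$ embeds $\m E^{*}$. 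Using the Ramsey-degree hypothesis for $\m A$, I pick $\m C \in \Age(\m F)$ with $\m C \arrows{(\m B')}{\m A}{kt, t}$ and any expansion $\m C^{*} \in \mathcal K^{*}$ of $\m C$, which exists because $\mathcal K^{*}$ covers $\Age(\m F)$.

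To verify $\m C^{*} \arrows{(\m B^{*})}{\m A^{*}}{k}$, I would take an arbitrary $c : \binom{\m C^{*}}{\m A^{*}} \longrightarrow [k]$ and extend it to $\bar c : \binom{\m C}{\m A} \longrightarrow [t]\times [k]$ by declaring $\bar c(\mc A) = (j, c(\mc A))$ when $\restrict{\m C^{*}}{\mc A} \cong \m A^{*}_{1}$, and $\bar c(\mc A) = (j, 0)$ when $\restrict{\m C^{*}}{\mc A} \cong \m A^{*}_{j}$ with $j\neq 1$. The Ramsey-degree arrow produces some $\mc B' \in \binom{\m C}{\m B'}$ on which $\bar c$ takes at most $t$ values. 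Hereditarity gives $\mc B'^{*} := \restrict{\m C^{*}}{\mc B'} \in \mathcal K^{*}$, and by the choice of $\m B'$, the structure $\m E^{*}$ (hence every $\m A^{*}_{j}$ and also $\m B^{*}$) embeds into $\mc B'^{*}$. Consequently all $t$ expansion types of $\m A$ are realized in $\binom{\mc B'}{\m A}$: the $t-1$ pairs $(j, 0)$ with $j\neq 1$ already consume $t-1$ of the permissible $\bar c$-values, leaving room for only one value of $c$ on the copies of $\m A^{*}$ inside $\mc B'^{*}$. The copy of $\m B^{*}$ guaranteed to lie inside $\mc B'^{*}$ is then $c$-monochromatic, which is exactly what is required.

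The main obstacle, which dictates the preparatory amalgamation step, is ensuring that \emph{every} expansion type $\m A^{*}_{j}$ actually occurs inside $\binom{\mc B'}{\m A}$; otherwise the budget of $t$ values allotted by the Ramsey degree could be absorbed by a strict subset of the expansion types, leaving more than one residual color for $c$ and destroying monochromaticity. Packing $\m B^{*}$ together with all the $\m A^{*}_{j}$'s into a single $\m E^{*}$ via the joint embedding property of $\mathcal K^{*}$, and then invoking the expansion property for the reduct $\m E$, is precisely the maneuver that forces this saturation, and it is the technical heart of the argument.
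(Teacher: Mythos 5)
Your proposal is correct and follows essentially the same route as the paper's own proof: saturate all expansion types of $\m A$ together with $\m B^{*}$ into one structure via joint embedding, pass to a reduct with the expansion property, apply the Ramsey-degree arrow with $kt(\m A)$ colors and $t(\m A)$ permitted values to the product coloring recording both the given color and the expansion type, and conclude by the counting argument that forces each type to carry exactly one color. (Minor typo only: in the definition of $\bar c$ the first clause should read $(1, c(\mc A))$ rather than $(j, c(\mc A))$.)
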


\begin{proof}[Proof of Proposition \ref{prop:Rdeg}]

Fix $k\in \N$, $\m A^{*}, \m B^{*} \in \mathcal K^{*}$. Consider $\m D$ with the expansion property with respect to $\restrict{\m A^{*}}{L}$ and to $\m B^{*}$ (finding such a structure is easy thanks to the expansion property of $\mathcal K^{*}$ relative to $\Age(\m F)$ and to the joint embedding property of $\mathcal K^{*}$). Consider also $\m C \in \Age(\m F)$ such that $$ \m C\arrows{(\m D)}{\m A}{kt(\m A), t(\m A)}.$$ 

Let $\m C^{*}\in \mathcal K^{*}$ be any expansion of $\m C$, and fix $c^{*}:\funct{\binom{\m C^{*}}{\m A^{*}}}{[k]}$. Seeing $\binom{\m C^{*}}{\m A^{*}}$ as a subset of $\binom{\m C}{\m A}$, extend $c^{*}$ to a coloring $c : \funct{\binom{\m C}{\m A}}{[k]}$, and define a new coloring $\bar c : \funct{\binom{\m C}{\m A}}{[k]\times [t(\m A)]}$ by $\bar c(\mc A) = (c(\mc A), i(\mc A))$, where $i(\mc A)$ denotes the isomorphism type of $\mc A$ seen as substructure of $\m C^{*}$. Because $\m A$ has finite Ramsey degree at most equal to $t(\m A)$ in $\Age(\m F)$, there is a copy $\mc D$ of $\m D$ in $\m C$ with at most $t(\m A)$ many $\bar c$-colors appearing on $\binom{\mc D}{\m A}$. Because $\m D$ has the expansion property relative to $\m A$, all isomorphism types of $\m A$ in $\Age(\m F^{*})$ appear in $\binom{\mc D}{\m A}$. Therefore, exactly $t(\m A)$ many $\bar c$-colors appear on $\binom{\mc D}{\m A}$, and all copies of $\m A$ sharing the same isomorphism type receive the same $\bar c$-color, hence the same $c$-color. In particular, all copies of $\m A^{*}$ in $\binom{\mc D}{\m A}$ receive the same $c^{*}$-color. Now, using the expansion property of $\m D$ relative to $\m B^{*}$, $\mc D$ contains a copy $\mc B ^{*}$ of $\m B^{*}$, and $\binom{\mc B^{*}}{\m A^{*}}$ is $c^{*}$-monochromatic.  \qedhere   

\end{proof}

Applying the previous proposition to the class $\mathcal K^{*} = \Age(\m F^{*})$ shows that it has the Ramsey property. This completes the proof of $ii)\Rightarrow i)$. \qedhere

\end{proof}

\section{Subclasses of ages with the Ramsey property}

\label{section:subclass}

The goal of this section is to prove Theorem \ref{thm:subclass}:

\begin{thmm}

\label{thm:subclass}

Let $\m F$ be a Fra\" iss\'e structure, and $\m F^{*}$ be a Fra\"iss\'e precompact relational expansion of $\m F$. Assume that $\Age(\m F^{*})$ consists of rigid elements, and has the Ramsey property. Then $\Age(\m F^{*})$ admits a Fra\"iss\'e subclass with the Ramsey property and the expansion property relative to $\Age(\m F)$.  

\end{thmm}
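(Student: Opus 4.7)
The plan is to realize the desired Fra\"iss\'e subclass as the age of a carefully chosen $G^{*}$-fixed point in a minimal subflow of $G \curvearrowright X^{*}$. First, by Theorem \ref{thm:EARP}, the Ramsey property and rigidity of $\Age(\m F^{*})$ give extreme amenability of $G^{*} = \Aut(\m F^{*})$. Using Zorn I choose a minimal subflow $Y \subseteq X^{*}$, and by extreme amenability of $G^{*}$ applied to the compact $G^{*}$-flow $Y$, a $G^{*}$-fixed point $\vec S_{0} \in Y$. Setting $\m F^{**} := (\m F, \vec S_{0})$ and $\mathcal{L}^{*} := \Age(\m F^{**})$, Proposition \ref{prop:age} gives $\mathcal{L}^{*} \subseteq \Age(\m F^{*})$, so rigidity and precompactness are inherited, and every $\m A \in \Age(\m F)$ has at least one expansion in $\mathcal{L}^{*}$ (take any copy inside $\m F^{**}$).

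The expansion property of $\mathcal{L}^{*}$ relative to $\Age(\m F)$ is then immediate from Theorem \ref{thm:EP}: since $\vec S_{0} \in Y$ and $Y$ is minimal, $X^{**} := \overline{G \cdot \vec S_{0}}$ equals $Y$, so $G \curvearrowright X^{**}$ is minimal, and Theorem \ref{thm:EP} (which does not require $\m F^{**}$ to be Fra\"iss\'e) applies directly.

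The main technical obstacle is to show that $\m F^{**}$ is ultrahomogeneous, making $\mathcal{L}^{*}$ a genuine Fra\"iss\'e class. Given a partial isomorphism $\phi : \m A \to \m B$ between finite substructures of $\m F^{**}$, ultrahomogeneity of $\m F$ extends its $L$-reduct to some $\hat\phi \in G$, and the task reduces to producing $\psi \in \Stab_{G}(A)$ with $\hat\phi\psi \in G^{**} := \Stab_{G}(\vec S_{0})$; equivalently, to showing that $\hat\phi^{-1} \cdot \vec S_{0}$ lies in the $\Stab_{G}(A)$-orbit of $\vec S_{0}$. The point $\hat\phi^{-1} \cdot \vec S_{0}$ already lies in $Y$ and agrees with $\vec S_{0}$ on $A$, and I plan a back-and-forth construction: iteratively enlarge $A$, alternately using $G^{*}$-fixedness of $\vec S_{0}$ together with ultrahomogeneity of $\m F^{*}$ to produce local corrections, while exploiting minimality of $Y$ and precompactness of $\m F^{**}$ to ensure that the iterated corrections converge in $G$ to the needed element of $G^{**}$. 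This step is where the heaviest bookkeeping is expected.

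Once ultrahomogeneity of $\m F^{**}$ is established, the Ramsey property of $\mathcal{L}^{*}$ follows from Proposition \ref{prop:Rdeg}, provided every $\m A \in \Age(\m F)$ has Ramsey degree in $\Age(\m F)$ bounded by the number $t'(\m A)$ of expansions of $\m A$ in $\mathcal{L}^{*}$. To obtain this bound, I adapt the Ramsey-degree argument from the proof of Theorem \ref{thm:UMF}: given $c : \binom{\m F}{\m A} \to [k]$, use extreme amenability of $G^{*}$ to pick a $G^{*}$-fixed $c_{0} \in \overline{G \cdot c}$; the map $g \mapsto g \cdot c_{0}$ factors through $G/G^{*}$ and extends by right-uniform continuity to a continuous $G$-equivariant $\hat\pi : X^{*} \to \overline{G \cdot c_{0}}$. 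The image $\gamma := \hat\pi(\vec S_{0})$ is $G^{**}$-fixed by equivariance, and ultrahomogeneity of $\m F^{**}$ puts $G^{**}$-orbits on $\binom{\m F}{\m A}$ in bijection with expansion types in $\mathcal{L}^{*}$, forcing $\gamma$ to take at most $t'(\m A)$ values. Density of $G \cdot c$ around $\gamma$ then yields the Ramsey degree bound, and Proposition \ref{prop:Rdeg} concludes.
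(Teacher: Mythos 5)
Your setup is the right one --- choosing a $G^{*}$-fixed point $\vec S_{0}$ in a minimal subflow of $X^{*}$, taking $\mathcal L^{*}=\Age(\m F,\vec S_{0})$, and getting the expansion property from Theorem \ref{thm:EP} is exactly how the paper proceeds. But the proof then hinges entirely on a step you do not carry out: the ultrahomogeneity of $\m F^{**}=(\m F,\vec S_{0})$. This is left as a sketch of a back-and-forth ``with heavy bookkeeping,'' and the difficulties you gesture at are real: orbits of $\Stab_{G}(A)$ in $X^{*}$ need not be closed, so producing an \emph{exact} element of $G^{**}$ rather than a sequence of approximations is precisely the hard point, and nothing in your outline shows the corrections converge in $G$ (which requires controlling $g_{n}$ and $g_{n}^{-1}$ simultaneously). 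Worse, the claim is stronger than what the theorem asks for: you only need $\mathcal L^{*}$ to be a Fra\"iss\'e \emph{class}, not that $(\m F,\vec S_{0})$ be its Fra\"iss\'e limit, and in general there is no reason a countable structure whose age is a Fra\"iss\'e class should itself be ultrahomogeneous. Since your Ramsey-degree bound also relies on this step (you use ultrahomogeneity of $\m F^{**}$ to identify $G^{**}$-orbits on $\binom{\m F}{\m A}$ with expansion types in $\mathcal L^{*}$), the gap propagates to the Ramsey property as well.

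Both uses of ultrahomogeneity can be eliminated, and that is what the paper does. For amalgamation, use Ne\v set\v ril's observation: for a class of rigid structures, hereditarity, joint embedding and the Ramsey property already imply the amalgamation property (color copies of $\m A$ in a large $\m D$ by which of $\m B,\m C$ map onto them compatibly with $f,g$; rigidity forces the two factorizations to agree). So once $\mathcal L^{*}$ is shown to have the Ramsey property, it is automatically Fra\"iss\'e. For the Ramsey degree bound, no identification of $G^{**}$-orbits is needed: run the argument of Theorem \ref{thm:UMF} to get that $g\cdot c$ on $\binom{\m B}{\m A}$ depends only on the isomorphism type of the copy as a substructure of $\m F^{*}$, but choose $\m B$ so that the substructure it supports in $\m F^{*}$ coincides with the one it supports in $(\m F,\vec S_{0})$ (possible since $\Age(\m F,\vec S_{0})\subset\Age(\m F^{*})$). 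Then on $\binom{\m B}{\m A}$ the two notions of isomorphism type agree, so the color depends only on the expansion type in $\mathcal L^{*}$, giving the bound by $t'(\m A)$ directly, and Proposition \ref{prop:Rdeg} finishes. With these replacements your outline closes up; as written, it does not.
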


\begin{proof}

Let $\vec S \in \overline{G\cdot \vec R^{*}}$ be such that $G\curvearrowright\overline{G\cdot \vec S}$ is minimal. Note that thanks to Proposition \ref{prop:age}, $\Age(\m F, \vec S)\subset \Age(\m F^{*})$ because $\vec S \in  \overline{G\cdot \vec R^{*}}$. We claim that $\Age(\m F, \vec S)$ is a required. 

The expansion property comes from minimality of $G\curvearrowright\overline{G\cdot \vec S}$ (Theorem \ref{thm:EP}). 

As for the Ramsey property, the proof follows the same pattern as the proof of Theorem \ref{thm:UMF}, $i)\Rightarrow ii)$: first, we use the Ramsey property for $\Age(\m F^{*})$ and the extreme amenability of $G^{*}$ to show that for every $\m A, \m B \in \Age(\m F)$, and every $k$-coloring $c$ of $\binom{\m F}{\m A}$, there is $g\in G$ so that on $\binom{\m B}{\m A}$, the value of $g\cdot c (\mc A)$ depends only on the isomorphism type of $\mc A$ seen as a substructure of $\m F^{*}$. Observe however that choosing $\m B$ so that the substructure it supports in $\m F^{*}$ is equal to the substructure it supports in $(\m F, \vec S)$ (this is possible because $\Age(\m F, \vec S)\subset \Age(\m F^{*})$), and using the fact that $\vec S$ is $G^{*}$ fixed, we can actually make sure that the value of $g\cdot c (\mc A)$ depends only on the isomorphism type of $\mc A$ seen as a substructure of $(\m F, \vec S)$ (and not only of $\m F^{*}$). It follows that every $\m A \in \Age(\m F)$ has a finite Ramsey degree less or equal to the number of non-isomorphic expansions of $\m A$ in $\Age(\m F, \vec S)$, and not only in $\Age(\m F^{*})$. The Ramsey property for $\Age(\m F, \vec S)$ is then derived from Proposition \ref{prop:Rdeg}. 

Finally, for classes consisting of rigid elements, the hereditarity, joint embedding and Ramsey properties imply the amalgamation property. This crucial fact was first noticed by Ne\v set\v ril, but we include it here for completeness (we repeat the argument given in \cite{KPT}, first half of p.129). Fix $\m A, \m B, \m C \in \Age(\m F, \vec S)$ and embeddings $f: \funct{\m A}{\m C}$, $g:\funct{\m A}{\m C}$. By the joint embedding property, find $\m E \in \Age(\m F, \vec S)$ in which both $\m B$ and $\m C$ can be embedded. Then, thanks to the Ramsey property, find $\m D \in \Age(\m F, \vec S)$ such that $$ \m D \arrows{(\m E)}{\m A}{4},$$ and consider the coloring $c : \funct{\binom{\m D}{\m A}}{\{ x : x\subset \{ \m B, \m C\}\}}$ defined as follows: given $\mc A\in \binom{\m D}{\m A}$, declare that $\m B\in c(\mc A)$ iff there is an embedding $r:\funct{\m B}{\m D}$ so that $r\circ f (\m A) = \mc A$, and similarly for $\m C$. Let $\mc E$ be such that $\binom{\mc E}{\m A}$ is $c$-monochromatic. Because both $\m B$ and $\m C$ embed in $\m E$, the corresponding constant value of $c$ is $\{ \m B, \m C\}$. Now, for $\mc A \in \binom{\mc E}{\m A}$, there are $r:\funct{\m B}{\m D}$ and and $s : \funct{\m C}{\m D}$ so that $r\circ f (\m A) = \mc A$ and $s\circ g (\m A) = \mc A$. So $r\circ f$ and $s\circ g$ are isomorphisms of $\m A$ with $\mc A$. Since $\m A$ and $\mc A$ are rigid, it follows that $r\circ f = s\circ g$, and $\m D, r$ and $s$ witness the amalgamation property. It follows that $\Age(F, \vec S)$ is Fra\"iss\'e. \qedhere 

\end{proof}

Here is an example to illustrate Theorem \ref{thm:subclass}. Consider the structures already described in Section \ref{section:min}: $\m F^{*} = \Q_{2} = (\Q, Q_{0}, Q_{1}, <)$, $\m F = (\Q, Q_{0}, Q_{1})$. Then $\Age(\m F^{*})$ has the Ramsey property (this fact is, for example, proved in \cite{KPT}), but we have seen that it does not have the expansion property relative to $\Age(\m F)$. However, passing to $\mathcal K \subset \Age(\m F^{*})$ consisting of all those $\m A^{*} \in \Age(\m F^{*})$ so that all elements of $P^{\m A} _{0}$ are less than those of $P^{\m A} _{1}$, then the expansion property holds. As for the Ramsey property, it holds thanks to the classical product Ramsey theorem. 

\section{The universal minimal flow of the circular directed graph $\m S(2)$}

\label{section:Cy}

The tournament $\m S(2)$, called the dense local order, is defined as follows: let $\mathbb{T}$ denote the unit circle in the complex plane. Define an oriented graph structure on $\mathbb{T}$ by declaring that there is an arc from $x$ to $y$ (in symbols, $y\la{\T}x$) iff $0 < \arg (y/x) < \pi$. Call $\Ty$ the resulting oriented graph. The dense local order is then the substructure $\m S(2)$ of $\Ty$ whose vertices are those points of $\mathbb T$ with rational argument. It is represented in the picture below.

\begin{figure}[h]
\begin{picture}(0,40)(0,-20)

\put(0,0){\circle{40}}
\put(0,20){\circle*{3}}
\put(10,-17){\circle*{3}}
\put(-20,0){\circle*{3}}

\put(0,20){\vector(-1,-1){20}}
\put(10,-17){\vector(-1,4){9}}
\put(-20,0){\vector(2,-1){30}}

\end{picture}
\caption{The tournament $\Cy$}
\end{figure}

This structure is one of the only three countable ultrahomogeneous tournaments (a tournament is a directed graph where every pair of distinct points supports exactly one arc), the two other ones being the rationals $(\Q, <)$, seen as a directed graph where $x \la{\Q} y$ iff $x<y$, and the so-called random tournament. It is therefore a Fra\"iss\'e structure in the language $L = \{ \leftarrow\}$ consisting of one binary relation. More information about this object can be found in \cite{W}, \cite{La} or \cite{Ch}. 

Our goal in this section is to describe the universal minimal flow of its automorphism group. But before doing so, let us mention the following simple fact, which shows why the technique developed in \cite{KPT} cannot be applied in order to do so:   

\begin{prop}
No pure order expansion of $\Cy$ has an age with the Ramsey and the expansion property. 
\end{prop}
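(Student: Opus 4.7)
Proof plan. I would proceed by contradiction. Suppose that $\mathbf{F}^{*}=(\mathbf{S}(2),<^{*})$ is a pure order expansion of $\mathbf{S}(2)$ such that $\mathrm{Age}(\mathbf{F}^{*})$ has both the Ramsey and the expansion property relative to $\mathrm{Age}(\mathbf{S}(2))$. By Theorem \ref{thm:subclass} applied to the Fra\"iss\'e limit of a suitable Fra\"iss\'e subclass, I may assume $\mathbf{F}^{*}$ itself is Fra\"iss\'e; the key properties survive.

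The first (easy) observation is that both orderings of the $2$-element arc substructure must lie in $\mathrm{Age}(\mathbf{F}^{*})$. Indeed, if only the ordering $x<^{*}y$ with $y\leftarrow x$ appeared, then $<^{*}$ would be a linear extension of the arc relation $\leftarrow$ on $\mathbf{S}(2)$; but any three points on the circle spanning more than a semi-circle form a $3$-cycle in $\mathbf{S}(2)$, and such a $3$-cycle admits no linear extension. Applying the expansion property to the $2$-element arc then yields a witness $\mathbf{B}\in\mathrm{Age}(\mathbf{S}(2))$ such that every ordering of $\mathbf{B}$ in $\mathrm{Age}(\mathbf{F}^{*})$ contains both an ``ascending'' and a ``descending'' arc.

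The heart of the argument is to derive a contradiction from the simultaneous fit of Ramsey and expansion for a pure order expansion. The approach I would take is to exploit the fact that $\mathbf{S}(2)$ carries a canonical $G$-invariant ternary ``cyclic order'' definable from $\leftarrow$ (it is determined by the orientation of $3$-cycles inherited from the circle). Using this, I would produce a $G$-invariant $2$-coloring of the $3$-element substructures of $\mathbf{S}(2)$ depending on whether the induced cyclic order on a $3$-cycle agrees or disagrees with its $<^{*}$-order, and argue that Ramsey for $\mathrm{Age}(\mathbf{F}^{*})$ applied to this coloring would force some ordered $\mathbf{B}^{*}$ in the age in which only one orientation of the $3$-cycle appears, conflicting with what the expansion property (applied to the ordered $3$-cycle) produces. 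Equivalently, and more structurally, by Theorem \ref{thm:UMF} the orbit closure $X^{*}\subseteq\mathrm{LO}(\mathbf{S}(2))$ would be the universal minimal flow of $G$; but the precompact expansion exhibited in the rest of Section \ref{section:Cy}, which adjoins to $<$ the $2$-partition $P_{0}\sqcup P_{1}$ coming from the two semi-circles, also realizes the UMF, and the ``parity'' information carried by the partition cannot be recovered $G$-equivariantly from a mere linear ordering.

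The main obstacle is this last step: producing the combinatorial clash rigorously without invoking the full computation of $M(G)$ carried out later in the section. My preferred route is the direct one, giving an explicit coloring of cyclic vs.\ anti-cyclic ordered $3$-cycles and showing the Ramsey property for this coloring is incompatible with the expansion property for the $3$-cycle; this essentially parallels, at the level of finite combinatorics, the result from \cite{LNS} that a pure order is not enough to witness the structural Ramsey property for subtournaments of $\mathbf{S}(2)$.
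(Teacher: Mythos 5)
Your reduction to a coloring argument is the right instinct, but the specific coloring you propose cannot produce a contradiction, and you acknowledge yourself that the decisive step is left open. The problem is quantitative. If a pure order expansion $\m F^{*}=(\Cy,<^{*})$ had the Ramsey property, then every $\m A\in\Age(\Cy)$ would have Ramsey degree in $\Age(\Cy)$ at most the number $t(\m A)$ of non-isomorphic $<^{*}$-expansions of $\m A$ (this is the only direction of the ``Ramsey degree equals number of expansions'' fact that is needed here). For the $3$-cycle one has $t=2$: the six linear orderings fall into exactly two isomorphism classes according to whether they agree with the cyclic orientation. Its Ramsey degree in $\Age(\Cy)$ is also $2$, so your $2$-coloring of $3$-cycles by ``agrees/disagrees with the cyclic order'' is perfectly compatible with both the Ramsey property (which only promises a copy of $\m B$ meeting at most $t=2$ colors) and the expansion property (which forces both types to occur); no clash can be extracted from it. Worse, read as a coloring of copies of a \emph{fixed} ordered $3$-cycle $\m A^{*}$, your coloring is constant, so the Ramsey property of $\Age(\m F^{*})$ says nothing about it at all.

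The structure to color is the single point, which is exactly where a pure order expansion is deficient: a point has only $t=1$ expansion by a linear order, so the Ramsey property of $\Age(\m F^{*})$ would force the point to have Ramsey degree $1$ in $\Age(\Cy)$, i.e.\ every finite coloring of the points of $\Cy$ would admit a monochromatic copy of every $\m B\in\Age(\Cy)$. But coloring the points of $\Cy$ by the left and right half-planes gives a $2$-coloring with no monochromatic cyclic triangle, since three points lying in a common half-plane span an angle less than $\pi$ and are therefore linearly ordered by the arc relation. This is the paper's one-line proof. Your opening observation about the two orderings of an arc is correct but not needed, and your alternative route via uniqueness of $M(G)$ would additionally require showing that the order flow and the partition flow are non-isomorphic as $G$-flows, which you do not do.
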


\begin{proof}
A simple way to achieve this is to use Ramsey degrees. A general fact is indeed that for every expansion of $\Cy$ whose age $\mathcal K^{*}$ has the Ramsey and the expansion property, the Ramsey degree of any $\m A$ in $\Age(\Cy)$ is exactly equal to the number of non-isomorphic expansions of $\m A$ in $\mathcal K^{*}$. If $\Cy ^{*}$ were a pure order expansion of $\Cy$, the substructure of $\Cy$ consisting of a single point would have only one expansion, and so its Ramsey degree in $\Age(\Cy)$ would be equal to $1$. This, however, is false, because coloring $\Cy$ with left and right half provides a coloring with no monochromatic cyclic triangle.  
\end{proof}

However, in view of Theorem \ref{thm:UMF}, it suffices to find a precompact relational expansion $\m S(2)^{*}$ of $\m S(2)$ whose age has the Ramsey property and the expansion property. It turns out that such an expansion essentially appears in \cite{LNS}, where the finite and the infinite Ramsey properties of $\m S(2)$ were analyzed. We now turn to a description of $\Cy ^{*}$. The appropriate language is $$L^{*} = L\cup\{P_{j}:j \in [2]\},$$ every symbol $P_{j}$ being unary. We expand $\Cy$ as $(\m S(2), P_{0} ^{*}, P_{1}^{*})$ in $L^{*}$, where $P^{*} _{0} (x)$ holds iff $x$ is in the right half plane, and $P^{*}_{1}(x)$ iff it is in the left half plane. Quite clearly, $\Cy ^{*}$ is a precompact relational expansion of $\Cy$.

\begin{prop}
The class $\Age(\Cy ^{*})$ has the Ramsey property and the expansion property relative to $\Age(\Cy)$. 
\end{prop}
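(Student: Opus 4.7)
The first step is to obtain a clean combinatorial description of $\Age(\Cy^{*})$. For any finite substructure $(A, \to, P_{0}, P_{1})$ of $\Cy^{*}$, let $<$ be the linear order on $A$ obtained by translating the arguments of the points in $P_{1}$ by $-\pi$ (so that all arguments lie in $(-\pi/2, \pi/2)$). A direct computation yields
\[
x \to y \ \iff \ (x, y \text{ have the same color and } x < y) \text{ or } (x, y \text{ have different colors and } y < x),
\]
and conversely, every finite 2-colored linear order $(A, <, P_{0}, P_{1})$ determines, via this same rule, a tournament in $\Age(\Cy)$ that embeds into $\Cy^{*}$. Hence $\Age(\Cy^{*})$ can be naturally identified with the class $\mathcal{L}_{2}$ of finite 2-colored linear orders.

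Under this identification, the Ramsey property for $\Age(\Cy^{*})$ is the Ramsey property for $\mathcal{L}_{2}$, which is a standard consequence of the finite Ramsey theorem. Given $\m A^{*}, \m B^{*} \in \mathcal{L}_{2}$ and $k \in \N$, one takes $\m C^{*}$ to be a 2-colored linear order whose 2-coloring pattern is rich enough to contain many copies of $\m B^{*}$, and then colors $|A|$-element subsets of the underlying linear order by the pair (the original $k$-color, the induced 2-coloring pattern of the subset). A suitable application of the finite Ramsey theorem followed by pigeon-holing yields the required copy of $\m B^{*}$ with $\m A^{*}$-monochromatic behavior.

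For the expansion property relative to $\Age(\Cy)$, given $\m A^{*} \in \mathcal{L}_{2}$, the plan is to construct $\m B \in \Age(\Cy)$ such that every valid expansion of $\m B$ in $\Age(\Cy^{*})$ contains a copy of $\m A^{*}$. The key observation is that each of $P_{0}^{*}$, $P_{1}^{*}$ in $\Cy^{*}$ is linearly ordered by the arc relation, so cyclic triangles admit no monochromatic valid expansion. Consequently, a tournament in $\Age(\Cy)$ containing $k$ pairwise vertex-disjoint cyclic triangles has at least $k$ points of each color in every valid expansion. Combining this count control with the Ramsey property of $\mathcal{L}_{2}$ established above, one iteratively refines the tournament $\m B$ so as to ensure not only the right multiplicities of each color but also the correct pattern of colors along the induced linear order in every valid expansion.

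The main obstacle I anticipate is precisely this last step: turning the \emph{count} of each color (which is readily obtained from cyclic-triangle packing) into the right \emph{pattern} of colors along the induced linear order in every valid expansion of $\m B$. This requires combining several Ramsey-type reductions within $\mathcal{L}_{2}$ and exploiting the combinatorial analysis of $\Cy$ carried out in \cite{LNS}, from which the argument for this proposition essentially follows.
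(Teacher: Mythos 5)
Your reduction of $\Age(\Cy^{*})$ to the class of finite $2$-colored linear orders is exactly the first step the paper takes: it identifies $\Cy^{*}$ with $\Q_{2}=(\Q,Q_{0},Q_{1},<)$ by reversing the arcs whose endpoints lie in different parts of the partition (equivalently, by your $-\pi$ rotation of $P_{1}$). The gap is in what you do with this identification. Your argument for the Ramsey property of $2$-colored linear orders fails: if you color the $|A|$-element subsets of the underlying linear order of $\m C^{*}$ by the pair (original color, induced $2$-coloring pattern) and apply the finite Ramsey theorem, the homogeneous set $D$ you obtain has all of its $|A|$-subsets realizing the \emph{same} pattern; as soon as $|D|>|A|\geq 1$ this forces every point of $D$ to carry the same unary color (replace the least element of an $|A|$-subset of $D$ by the next element of $D$ and compare the two patterns), so $D$ contains no copy of $\m B^{*}$ unless $\m B^{*}$ is monochromatic. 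No pigeonholing repairs this. The Ramsey property of $\Age(\Q_{n})$ is true but genuinely non-trivial: already for $\m A^{*}$ of pattern $01$ and $\m B^{*}$ of pattern $0101$ one must choose the copy of $\m B^{*}$ differently according to which of several Ramsey colors happen to coincide. The paper does not reprove this; it invokes the proof in \cite{KPT} that $\Age(\Q_{n})$ is a Ramsey class, and you should either do the same or supply that argument in full.

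The expansion property half has a second gap, which you partly acknowledge. The disjoint-cyclic-triangles device only bounds from below the number of points of each color in every admissible expansion of $\m B$, whereas the expansion property requires every admissible expansion of $\m B$ to contain every expansion of $\m A$ in $\Age(\Cy^{*})$, i.e., to realize a prescribed \emph{interleaving pattern} of the two colors along the induced order; ``iteratively refining using the Ramsey property'' is not an argument, and note that the general Ramsey-implies-expansion mechanism of Theorem \ref{thm:subclass} would only deliver the expansion property for a \emph{subclass} of $\Age(\Cy^{*})$, not for the whole class. What is actually needed is a structural description of all admissible expansions of a given $\m B\in\Age(\Cy)$ (they are obtained from one another by rotating the separating diameter), from which one checks that a suitably chosen $\m B$ works for every expansion of $\m A$ simultaneously. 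This is precisely Lemma 3 of \cite{LNS}, which is what the paper cites at this point; since your proposal ultimately defers to the same source the conclusion stands, but the intermediate construction you sketch is not self-supporting.
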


\begin{proof}

The crucial fact here is the link that $\Cy ^{*}$ possesses with the structure $\Q_{2}$, which we have already encountered. In general, when $n\in \N$, the structure $\Q _n$ is defined as $(\Q , Q_0 ,\ldots,Q_{n-1},<)$ where $\Q$ denotes the rationals, $<$ denotes the usual ordering on $\Q$, and every $Q_i$ is a dense subset of $\Q$. As before for $\Q$, it is convenient to think as $\Q_{n}$ as a directed graph together with some partition, so $\Q_{2}$ can really be seen as a structure in the language $L^{*}$. The link between $\Cy^{*}$ and $\Q_{2}$ is the following one: the structure $\Q_{2}$ is simply obtained from $\Cy^{*}$ by reversing all the arcs whose extremities do not belong to the same part of the partition. The simple reason behind that fact is that if $x, y\in \Cy$ are such that $P^{*}_{0}(x)$ and $P^{*}_{1}(y)$, then $x\la{\T}y$ iff $(-y)\la{\T}x$, where $(-y)$ denotes the opposite of $y$. So one way to realize the transformation from $\Cy^{*}$ to $\Q_{2}$ is to consider $\Cy^{*}$, to keep the partition relation, but to replace the arc relation by the relation obtained by symmetrizing all the elements in the left half. Quite clearly, the new arc relation defines a total order, which is dense in itself and without extremity point, and where both parts of the partition are dense. Therefore, the resulting structure is $\Q_{2}$. Similarly, applying the same transformation to $\Q_{2}$ gives raise to $\Cy^{*}$. Formally, $\Cy ^{*}$ and $\Q_{2}$ are said to be \emph{first-order simply bi-definable}. So, in some sense, $\Q_{2}$ and $\Cy^{*}$ really can be though as the same structure. Using this, it is easy to see that the Ramsey property holds for the $\Age (\Cy^{*})$ iff it holds for $\Age(\Q_{2})$, a fact which is proved in \cite{KPT} (in fact, it is shown that  $\Age(\Q_{n})$ has the Ramsey property for every $n\geq 1$). 

As for the expansion property, it is essentially proved in \cite{LNS}, Lemma 3, where the result is actually proved for structures called \emph{extensions} of elements of $\Age(\Cy)$. By definition, an extension of $\m A \in \Age(\Cy)$ is a substructure of $\Q_{2}$ obtained from $\m A$ by first taking an  expansion of $\m A$ in $\Age(\Cy^{*})$, and then turning it into a substructure of $\Q_{2}$ by applying the transformation we described above between $\Cy^{*}$ and $\Q_{2}$. Lemma 3 of \cite{LNS} then asserts that for every $\m A \in \Age(\Cy)$, there exists $\m B \in \Age(\Cy)$ so that every extension of $\m A$ embeds in every extension of $\m B$. This result is equivalent to the expansion property of $\Age(\Cy ^{*})$ relative to $\Age(\Cy)$. \qedhere

\end{proof}

Setting $G=\Aut(\Cy)$ and $G^{*}=\Aut(\Cy^{*})$, the universal minimal flow of $G$ is, in virtue of Theorem \ref{thm:UMF}, the action $G\curvearrowright X^{*}$, where $X^{*}:=\overline{G\cdot(P^{*}_{0}, P^{*}_{1})}$, the closure of $G\cdot(P^{*}_{0}, P^{*}_{1})$ in $[2]^{\Cy}\times[2]^{\Cy}$. We are going to provide a concrete description of that action. In the unit circle $\T$, consider the set $S$ supporting $\Cy$, the set $(-S)$ of all its opposite points, and the set $C = \T \smallsetminus (S\cup (-S))$. Consider $$\hat \T = C\cup \left((S\cup (-S))\times [2]\right).$$ 

Intuitively, it is obtained from the unit circle $\T$ by doubling the points in $S\cup (-S)$. Next, for $t\in \hat \T$, define $p(t)$ as the natural projection of $t$ on $\T$, and for $\alpha, \beta \in S\cup(-S)$ so that $\alpha \la{\T}\beta$, define $[\alpha, \beta]$ by: $$ [\alpha, \beta]:= \{ (\alpha, 0)\} \cup \{ t\in \hat \T: \alpha \la{\T}p(t)\la{\T}\beta \}\cup \{(\beta, 1)\}.$$

This set is represented in Figure 2 (as the right part of the circle, together with the two black dots).

\begin{figure}[h]
\begin{picture}(0,60)(0,-30)

\put(0,0){\circle{40}}

%\put(-20,-20){\line(1,1){40}}

%\put(-30,-15){$T_1$}
%\put(-17,-28){$T_2$}

%\put(14,14){\circle*{3}}
%\put(10,4){$t$}

\put(0,20){\circle*{3}}
\put(-2,10){$\alpha$}

\put(14,-14){\circle*{3}}
\put(10,-10){$\beta$}

%\put(20,0){\circle*{3}}
%\put(22,-2){$u$}

%\put(-150,20){$\bullet$ $a, b$ rational}

%\put(-150,0){$\bullet$ $u=(b,1)$ allowed}

\put(0,20){\circle*{3}}

\put(14,28){$0$}
\put(11,31){\circle*{3}}
\put(0,20){\line(1,1){10}}

%\put(-140,-10){$u=(b,2)$ not allowed}

\put(-18.5,29.5){$1$}
\put(-11,31){\circle{3}}
\put(0,20){\line(-1,1){10}}

%\put(-150,-30){$\bullet$ $u=(a,2)$ allowed}

\put(14,-14){\circle*{3}}
\put(24,-14){\circle*{3}}
\put(26,-16){$1$}
\put(14,-14){\line(1,0){10}}

%\put(-140,-40){$u=(a,1)$ not allowed}

\put(14,-24){\circle{3}}
\put(12,-34){0}
\put(14,-14){\line(0,-1){8.5}}

\end{picture}
\caption{The set $[\alpha, \beta]$}
\end{figure}

\begin{prop}

Sets of the form $[\alpha, \beta]$ form a basis of open sets for a topology on $\hat \T$, and the corresponding space is homeomorphic to $X^{*}$.

\end{prop}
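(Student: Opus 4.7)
The plan is to verify the basis axioms, then construct a $G$-equivariant bijection $\Phi : \hat{\mathbb{T}} \to X^*$, and conclude by combining continuity with compactness. For the basis axioms, I first check that every $t \in \hat{\mathbb{T}}$ lies in some $[\alpha, \beta]$: by density of $S \cup (-S)$ in $\mathbb{T}$, for $t \in C$ one picks $\alpha, \beta \in S \cup (-S)$ with $\alpha \la{\T} p(t) \la{\T} \beta$; for $t = (\alpha, 0)$ one picks $\beta \in S \cup (-S)$ close to $\alpha$ with $\alpha \la{\T} \beta$, so that $(\alpha, 0) \in [\alpha, \beta]$ holds by definition; the case $(\alpha, 1)$ is symmetric. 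Given $t \in [\alpha_1, \beta_1] \cap [\alpha_2, \beta_2]$, the same density argument produces $[\alpha_3, \beta_3] \subseteq [\alpha_1, \beta_1] \cap [\alpha_2, \beta_2]$ containing $t$, with $\alpha_3$ and $\beta_3$ chosen on the correct side of $p(t)$ when $p(t) \in S \cup (-S)$ to match which copy $t$ represents.

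Next I would define $\Phi : \hat{\mathbb{T}} \to [2]^{\Cy} \times [2]^{\Cy}$ by $\Phi(t) := (Q_0^t, Q_1^t)$, where the diameter through $\pm p(t)$ splits $\mathbb{T}$ into two open arcs and $Q_0^t$ (resp. $Q_1^t$) collects the elements of $\Cy$ lying in the ``right'' arc (i.e.\ those $x$ with $0 < \arg(x/p(t)) < \pi$) respectively the ``left'' arc; when $p(t) \in S \cup (-S)$, the bit $\varepsilon$ of $t$ resolves on which side the unique element of $\Cy \cap \{p(t), -p(t)\}$ lands. The map $\Phi$ becomes $G$-equivariant once $G = \Aut(\Cy)$ is made to act on $\hat{\mathbb{T}}$ by extending its action on $\Cy$ to $\mathbb{T}$ using the cyclic tournament structure. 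Injectivity is immediate from density of $\Cy$ in $\mathbb{T}$. For surjectivity, $G \cdot (P_0^*, P_1^*) \subseteq \Phi(C)$ because each $g \in G$ carries the original diameter (through two points of $C$) to a new diameter through $\pm p(t)$ for some $t \in C$, and every further limit in $X^* = \overline{G \cdot (P_0^*, P_1^*)}$ is realized by a doubled point: approaching a diameter through $\pm\alpha$ with $\alpha \in S \cup (-S)$ from one of the two sides produces exactly the two distinct limits $(\alpha, 0)$ and $(\alpha, 1)$, and Proposition~\ref{prop:finite} ensures that no additional partitions arise.

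Continuity of $\Phi$ is then checked pointwise: given $t$ and a finite $F \subset \Cy$, I pick a basic neighbourhood $[\alpha, \beta] \ni t$ narrow enough that no point of $F$ other than possibly $p(t)$ itself lies strictly between $\alpha$ and $\beta$ along the relevant arc; then every $t' \in [\alpha, \beta]$ yields the same values on $F$ as $t$. To finish, I would verify that $\hat{\mathbb{T}}$ is compact Hausdorff (Hausdorff is immediate from the basis; compactness can be argued by a standard split-interval-style argument, or alternatively deduced from surjectivity of $\Phi$ together with closedness of $\Phi$ on a carefully chosen base), and then invoke the classical fact that a continuous bijection from a compact space onto a Hausdorff space is a homeomorphism. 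The hardest step will be the surjectivity of $\Phi$: one must verify that \emph{every} partition in $X^*$ has the described form, which is precisely what motivates the doubling of points of $S \cup (-S)$, these being the partitions arising as one-sided limits of the right/left half-plane decomposition under the action of $G$.
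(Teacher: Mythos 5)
Your overall identification of $X^{*}$ with $\hat \T$ --- code a partition by its cut point, doubled when that point lies in $S\cup(-S)$ --- is exactly the one the paper uses, and your basis-axiom check and injectivity argument are fine. But the step you yourself call the hardest, surjectivity (more precisely, the claim that \emph{every} element of $X^{*}$ is a ``half-plane'' partition, possibly with a doubled cut point), is exactly the step you do not carry out, and the tool you cite for it is the wrong one. Proposition \ref{prop:finite} only says that $G\cdot(P^{*}_{0},P^{*}_{1})$ is precompact; it says nothing about \emph{which} partitions occur in the closure. The statement you need is Proposition \ref{prop:age}: $(T_{0},T_{1})\in X^{*}$ iff $\Age(\Cy,T_{0},T_{1})\subset\Age(\Cy^{*})$, i.e.\ iff $T_{0}$ and $T_{1}$ partition $\Cy$ and each is totally ordered by the arc relation; from this one reads off that each part is an arc of angular length $\pi$ and the partition is determined by $t=\sup T_{0}$ together with a bit when $t\in S\cup(-S)$. (Your alternative route via one-sided limits can be made to work, but then you must actually run the compactness-of-$\T$/subnet argument and also check the easy containment $\Phi(\hat\T)\subset X^{*}$, which you never state.) A second concrete slip: in the continuity check, the classification of $x\in F$ relative to the diameter through $\pm p(t')$ flips when $p(t')$ crosses $-x$ as well as when it crosses $x$, so your interval $(\alpha,\beta)$ must avoid $F\cup(-F)$ (minus $p(t)$ itself), not just $F$; this is why the paper subdivides $\T$ by $F\cup(-F)$.

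Finally, your plan to finish via ``$\hat\T$ is compact, continuous bijection onto a Hausdorff space'' is legitimate but leaves the compactness of $\hat\T$ as yet another unproved claim (a genuine split-interval-type argument is needed). You can bypass it entirely, as the paper does, by showing that $\Phi$ carries each $[\alpha,\beta]$ \emph{exactly} onto the basic open subset of $X^{*}$ determined by the two-point set $F=\{a,b\}$ with $a\in\{\alpha,-\alpha\}\cap\Cy$ and $b\in\{\beta,-\beta\}\cap\Cy$, and conversely that every basic open set of $X^{*}$ arises this way from the subdivision of $\hat\T$ by $F\cup(-F)$. That simultaneously proves that the $[\alpha,\beta]$ form a basis, that $\Phi$ is open and continuous, and hence that it is a homeomorphism, with no appeal to compactness. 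I would urge you to rewrite the argument along these lines, or at least to replace the appeal to Proposition \ref{prop:finite} by Proposition \ref{prop:age} and to supply the missing compactness proof.
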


\begin{proof}
The group $G$ acts naturally on $(P^{*}_{0}, P^{*}_{1})$ by moving $(P^{*}_{0}, P^{*}_{1})$. It follows that any element $(T_{0}, T_{1})$ in the orbit of $(P^{*}_{0}, P^{*}_{1})$ partitions $\Cy$ into two halves whose extremity points are not $\Cy$. Such an element is coded by $t := \sup T_0 \in C$ (this supremum is justified by the fact that the directed graph relation totally orders $T_{0}$), see Figure 3.

\begin{figure}[h]
\begin{picture}(0,50)(0,-30)

\put(0,0){\circle{40}}

%\put(0,-30){\line(0,1){60}}

\put(-20,-20){\line(1,1){40}}

\put(-30,-15){$T_0$}
\put(-17,-28){$T_1$}

\put(14,14){\circle*{3}}
\put(13,17){$t$}

\end{picture}
\caption{The point $t$ associated to $(T_{0}, T_{1})$.}
\end{figure}

The topology induced by $[2]^{\Cy}\times[2]^{\Cy}$ can be described as follows: a basic open set around $(T_0,T_1)$ is provided by a finite set $F\subset \Cy$ and defined by: 
$$(U_0, U_1) \in U_{F} \ \textrm{iff} \ (T_0, T_1) \ \textrm{and} \ (U_0, U_1)  \ \textrm{partition $F$ the same way.}$$ 

We now turn into a description of the closure $X^{*}$ of the orbit of $(P^{*}_{0}, P^{*}_{1})$. According to Proposition \ref{prop:age}, it consists of all those elements $(T_{0}, T_{1})\in [2]^{\Cy}\times[2]^{\Cy}$ so that $$\Age(\Cy, T_{0}, T_{1}) \subset \Age(\Cy^{*}).$$ 

Therefore, those are partitions of $\Cy$, and each of the parts has to be totally ordered by the arc relation on $\Cy$. The element $t$ defined as before may take any value in $\T$ because of the denseness of $\Cy$. However, $t$ or its opposite $(-t)$ can belong to $\Cy$. Equivalently, we may have $t\in S\cup (-S)$.  When that happens, $t$ does not suffice to characterize $(T_{0}, T_{1})$, as there are two choices for $(T_0,T_1)$: if $t\in S$, then either $t\in T_0$, and in that case we code $(T_0,T_1)$ by $(t,0)$; or $t\in T_1$, and we code $(T_0,T_1)$ by $(t,1)$. If $t\in (-S)$, then either $-t\in T_0$, and in that case we code $(T_0,T_1)$ by $(t,1)$; or $-t\in T_1$, and we code $(T_0,T_1)$ by $(t,0)$. It follows that as a set, we may think of $X^{*}$ as $\hat \T$, and that the group $G$ acts naturally on $X^{*}$.

To finish the proof, it suffices to show that the sets of the form $[\alpha, \beta]$ actually correspond to the basic open sets in $X^{*}$. We consider the case $\alpha \in S$ and $\beta \in (-S)$. The other cases are treated similarly. Let $a=\alpha$ and $b=-\beta$. Those two points are in $\Cy$. Consider now $F=\{ a, b\}$ and take $t \in \hat \T$ so that $$\alpha \la{\T}p(t)\la{\T}\beta.$$

It is then easy to see that $[\alpha, \beta]$ is equal to the basic open set $O$ around $t$ based on $F$. In fact, it turns out that all basic open sets are of that form. Indeed, consider now $F\subset \Cy$ finite and $t\in \hat \T$. The set $F\cup (-F)$ subdivides $\T$ into intervals, and this subdivision gives raise to a unique partition of $\hat \T$ into intervals. Let $\alpha, \beta$ denote the only elements of $F\cup (-F)$ so that $t\in [\alpha, \beta]$. Then observe that the basic open set $O$ around $t$ and based on $F$ is actually equal to $[\alpha, \beta]$. \qedhere

\end{proof}

This representation has at least two advantages. First, it is clear that $X^{*}$ is homeomorphic to the Cantor space. Second, it allows to visualize pretty well the action of $G$ on $X^{*}$, which is not so common when dealing with universal minimal flows. Another remarkable instance where that happens is due to Pestov in \cite{Pe1}. It deals with the orientation preserving homeomorphisms of $\T$, equipped with the pointwise convergence topology. That example provided the first known example of a metrizable, non-trivial, universal minimal flow, which is, in that case, the natural action on the circle by homeomorphisms.   

\section{The universal minimal flow of the circular directed graph $\m S(3)$}

\label{section:Cz}

The technique used in the previous section in order to compute the universal minimal flow of the dense local order also applies in the case of another directed graph, called $\Cz$. The notation suggests that $\Cz$ is a modified version of $\Cy$, and it is indeed the case. Call $\Dz = (\T, \la{\mathbb D})$ the directed graph defined on $\T$ by declaring that there is an arc from $x$ to $y$ iff $0 < \arg (y/x) < 2\pi/3$. The directed graph $\Cz$ is then the substructure of $\Dz$ whose vertices are those points of $\mathbb T$ with rational argument. It is represented in the picture below.

\begin{figure}[h]
\begin{picture}(0,40)(0,-20)

\put(0,0){\circle{40}}
\put(0,20){\circle*{3}}
\put(10,-17){\circle*{3}}
\put(-20,0){\circle*{3}}

\put(0,20){\vector(-1,-1){20}}
\put(-20,0){\vector(2,-1){30}}

\end{picture}
\caption{The directed graph $\Cz$}
\end{figure}

Like $\Cy$, $\Cz$ is Fra\"iss\'e structure in the language $L = \{ \leftarrow\}$ consisting of one binary relation. The main obvious difference with $\Cy$ is that it is not a tournament (that is, some pairs of points do not support any arc). For more information about this object, we refer to \cite{Ch}. 

For the same reason as in the case of $\Cy$, no pure order expansion of $\Cz$ has an age with both the Ramsey and the expansion property, but there is a precompact expansion $\Cz^{*}$ which does. The appropriate language is $$L^{*} = L\cup\{P_{j}:j \in [3]\},$$ with every symbol $P_{i}$ unary, and $\Cz^{*}$ is defined by $\Cz^{*} = (\Cz, P^{*}_{0}, P^{*}_{1}, P^{*}_{2})$, where $$P^{*}_{j}(x) \Leftrightarrow \left( \frac{2j\pi}{3} < \arg(x) + \frac{\pi}{6} < \frac{2(j+1)\pi}{3}\right)$$

The corresponding structure is described below:

\begin{figure}[h]
\begin{picture}(0,45)(0,-20)

\put(0,0){\circle{40}}

\put(0,0){\line(0,1){30}}
\put(0,0){\line(6,-5){22}}
\put(0,0){\line(-6,-5){22}}

\put(-13,23){$P^{*}_1$}
\put(3,23){$P^{*}_0$}
\put(-3,-30){$P^{*}_2$}

\end{picture}
\caption{The expansion $\Cz^{*}$}
\end{figure}

\begin{prop}
The class $\Age(\Cz ^{*})$ has the Ramsey property. 
\end{prop}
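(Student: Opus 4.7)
The plan is to mimic, for $\Cz^{*}$, the strategy used in the previous section for $\Cy^{*}$: namely, to exhibit a first-order simple bi-definability between $\Cz^{*}$ and the structure $\Q_{3} = (\Q, Q_{0}, Q_{1}, Q_{2}, <)$, and then to transfer the Ramsey property from $\Age(\Q_{3})$ (known from \cite{KPT}) to $\Age(\Cz^{*})$. This is the natural three-part analogue of the $\Cy^{*} \leftrightarrow \Q_{2}$ correspondence.

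To set up the bi-definability, I would introduce the unfolding map $\phi : \Cz \longrightarrow (-\pi/6, \pi/2)$ defined by $\phi(x) = \arg(x) - 2j\pi/3$ whenever $x \in P^{*}_{j}$. Pulling back the natural ordering of the interval gives a dense linear order $<'$ on $\Cz$, and because each $P^{*}_{j}$ consists of the rationals of $\Cz$ whose argument lies in a given arc of length $2\pi/3$, each class $P^{*}_{j}$ is dense with respect to $<'$. Thus $(\Cz, P^{*}_{0}, P^{*}_{1}, P^{*}_{2}, <')$ is isomorphic to $\Q_{3}$. The key verification is then that the arc relation $\leftarrow$ of $\Cz$ is quantifier-free definable from $<'$ and the partition, and vice-versa. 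This follows from a case analysis depending on the classes of the two points $x, y$: when $x, y$ lie in the same $P^{*}_{j}$, an arc $x \leftarrow y$ coincides with $y <' x$; when $x \in P^{*}_{j}$ and $y \in P^{*}_{j+1}$ (cyclically), the arc $x \leftarrow y$ corresponds to $x <' y$; and when $x \in P^{*}_{j}$ and $y \in P^{*}_{j+2}$, an arc $x \leftarrow y$ exists exactly when $y <' x$ (otherwise the pair supports no arc at all). Reading these equivalences in the other direction expresses $<'$ from $\leftarrow$ and the partition.

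Once the bi-definability is established, the two classes $\Age(\Cz^{*})$ and $\Age(\Q_{3})$ are in canonical inclusion-preserving bijection (via the same quantifier-free translations on finite substructures), so the arrow relation for finite colorings is preserved, and the Ramsey property transfers. Since \cite{KPT} proves that $\Age(\Q_{n})$ has the Ramsey property for every $n \geq 1$, applying this with $n = 3$ yields the Ramsey property for $\Age(\Cz^{*})$.

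The main obstacle is the case analysis establishing that $\phi$ really produces $\Q_{3}$ and, especially, that the arc relation cleanly reconstructs from $(<', P^{*}_{0}, P^{*}_{1}, P^{*}_{2})$: unlike the $\Cy^{*}$ case, $\Cz^{*}$ is not a tournament, and between points of non-adjacent classes one must account for the possibility of no arc. Once one notices, however, that the angular band structure of $\T$ forces the arc to go in only one of the two possible directions between any two classes (the cyclic rule $P_{0} \to P_{1} \to P_{2} \to P_{0}$), the bi-definability falls out uniformly, and the Ramsey transfer is routine.
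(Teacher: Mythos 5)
Your overall strategy is exactly the paper's: unfold the three sectors by rotations of $-2\pi/3$ and $+2\pi/3$ (your map $\phi$ is precisely the paper's device of applying the rotation $r$ of angle $-2\pi/3$ to $P^{*}_{1}$ and $r^{-1}$ to $P^{*}_{2}$), observe that the resulting ordered partitioned set is $\Q_{3}$, check that the arc relation and the order are quantifier-free interdefinable over the partition, and transfer the Ramsey property from $\Age(\Q_{3})$. So the route is the same; the issue is that the case analysis you yourself identify as the key verification is wrong in two of its three cases, and the two inter-class cases are mutually inconsistent.

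The convention-independent facts are these. In $\Dz$ there is an arc from $x$ to $y$ iff $0<\arg(y/x)<2\pi/3$; since each $P^{*}_{j}$ is a sector of angular width exactly $2\pi/3$ and $P^{*}_{j+1}$ lies immediately counterclockwise of $P^{*}_{j}$, every arc joining two distinct classes goes from a point of $P^{*}_{j}$ to a point of $P^{*}_{j+1}$, never the other way; and such an arc exists iff the target precedes the source in the unfolded order, i.e.\ iff $\phi(\text{target})<\phi(\text{source})$ (late elements of $P^{*}_{j}$ reach early elements of $P^{*}_{j+1}$). Reading ``$x\leftarrow y$'' as the paper does (arc from $y$ to $x$), which is the reading that makes your same-class case correct, your second case asserts an arc from a $P^{*}_{j+1}$-point to a $P^{*}_{j}$-point, which never occurs; the arc that can occur there goes from $x$ to $y$ and holds iff $y<'x$, not $x<'y$. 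Your third case has the arc pointing the admissible way (from $P^{*}_{j+2}$ to $P^{*}_{j}=P^{*}_{(j+2)+1}$, consistent with your closing cyclic rule but contradicting your second case), yet the order condition is again reversed: that arc exists iff $x<'y$. As stated, your dictionary would not reconstruct $\Cz^{*}$ from $(<',P^{*}_{0},P^{*}_{1},P^{*}_{2})$. These are local, fixable sign errors rather than a conceptual gap: with the corrected dictionary the bi-definability holds and the Ramsey transfer is routine, exactly as in the paper. You should also record that $\phi$ is injective (because $2\pi/3$ is irrational while points of $\Cz$ have rational argument); otherwise the pulled-back relation $<'$ would not be a linear order.
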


\begin{proof}

Like in the case of $\Cy ^{*}$, $\Cz^{*}$ has a simply bi-definable with a structure of the family $(\Q_{n})_{n\in\N}$, namely $\Q_{3}$. To see this, it will be convenient to think that the relations $P^{*}_{0}, P^{*}_{1}, P^{*}_{2}$ are actually indexed by $\Z/3\Z$. The transformation that allows to go from $\Cz^{*}$ to $\Q_{3}$ is the following one. Keep the same partition relations. Let $x, y \in \Cz$. If they belong to the same part, do not change the arc relation. If $P^{*}_{j}(x)$ and $P^{*}_{j+1}(y)$, where $j \in \Z/3\Z$, then either $y\la{\Cz}x$ or there is no arc between $x$ and $y$. In the first case, reverse the arc. In the second one, create an arc from $x$ to $y$. The resulting structure is then $\Q_{3}$. Again, there is a simple geometric reason to explain why that works: call $r$ the rotation about the origin with angle $(-2\pi/3)$. Then, if $x, y\in \Cz$ are such that $P^{*}_{j}(x)$ and $P^{*}_{j+1}(y)$, then, in $\Dz$, $y\la{\D}x$ iff $x\la{\D}r(y)$. Therefore, one way to realize the transformation described above is to preserve the partition relation, but to replace the arc relation by the new arc relation obtained by applying $r$ to all the elements of $P^{*}_{1}$, and $r^{-1}$ to all the elements of $P^{*}_{2}$. This new arc relation provides a total order which is dense in itself and without extremity points. Furthermore, all parts of the partition are dense. It follows that the resulting structure is $\Q_{3}$. Because $\Age(\Q_{3})$ has the Ramsey property, so does $\Age(\Cz ^{*})$.  \qedhere

\end{proof}

Consider now $G = \Aut(\Cz)$, as well as the flow $G\curvearrowright X^{*}$ where $$X^{*}=\overline{G\cdot(P^{*}_{0}, P^{*}_{1}, P^{*}_{2})}\subset  ([2]^{\Cy})^{3}.$$

\begin{prop}
The flow $G\curvearrowright X^{*}$ is minimal. 
\end{prop}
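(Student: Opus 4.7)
By Theorem \ref{thm:EP}, showing that $G\curvearrowright X^{*}$ is minimal amounts to proving that $\Age(\Cz^{*})$ has the expansion property relative to $\Age(\Cz)$. The plan is therefore to establish this expansion property, following the template used in the preceding section for $\Cy^{*}$.

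The first step is to exploit the bi-definability between $\Cz^{*}$ and $\Q_{3}$ that was used to obtain the Ramsey property. Given $\m A \in \Age(\Cz)$, every $\Cz^{*}$-expansion of $\m A$ translates, via the bi-definability transformation, into an \emph{extension} of $\m A$, i.e.\ a substructure of $\Q_{3}$; by precompactness there are only finitely many such extensions, call them $\m A^{\sigma}_{1}, \ldots, \m A^{\sigma}_{N}$. The expansion property is then equivalent to the following three-part analog of Lemma 3 of \cite{LNS}: for every $\m A \in \Age(\Cz)$ there is $\m B \in \Age(\Cz)$ such that every extension of $\m A$ embeds into every extension of $\m B$, with embeddings taken in the language of $\Q_{3}$.

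To construct such a $\m B$, I would first use the Fra\"{i}ss\'e character of $\Q_{3}$ and the density of its three parts to obtain an ordering property for $\Age(\Q_{3})$ relative to its pure 3-partite reduct: for any $\m A^{\sigma}_{i}$ there is a 3-partite structure $\m C_{i}$ such that every linear ordering of $\m C_{i}$ compatible with the partition contains $\m A^{\sigma}_{i}$. Amalgamating the $\m C_{i}$ yields a single 3-partite $\m C$ that works uniformly for all $i$. Then $\m B\in\Age(\Cz)$ should be chosen so that, when embedded in $\Cz$ with its three parts placed in the three cyclic thirds of $\Cz^{*}$, it realizes $\m C$. What remains is to verify that \emph{any} valid $\Cz^{*}$-expansion of $\m B$, i.e.\ any other cut of the circle into three linearly ordered parts, yields a $\Q_{3}$-structure still embedding every $\m A^{\sigma}_{i}$.

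The main obstacle is precisely this last verification. Different $\Cz^{*}$-expansions of the same $\m B$ correspond to cutting the circle with different radial lines; they shuffle which points belong to which part of the partition, and consequently produce genuinely different $\Q_{3}$-structures after the bi-definability transformation. Ensuring the chosen $\m B$ is robust enough to work uniformly across every such cut requires tracking how rotations of the cut interact with the ordering property of $\Q_{3}$, and is the technical heart of the proof; this is the analog, in the cyclic three-part setting, of the verification carried out in \cite{LNS} for the two-part case. Once this is secured, Theorem \ref{thm:EP} delivers minimality.
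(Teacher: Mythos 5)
Your overall strategy (reduce to the expansion property via Theorem \ref{thm:EP}, then attack the expansion property combinatorially through the bi-definability with $\Q_{3}$) is viable in principle, but the proposal does not actually complete it. You yourself identify the decisive step --- showing that the chosen $\m B$ still embeds every extension of $\m A$ under \emph{every} admissible three-part cut of the circle, not just the one used to build it --- and then leave it as an acknowledged obstacle rather than proving it. That step is precisely the content of the statement; without it nothing has been established. Moreover, the intermediate "ordering property for $\Age(\Q_{3})$ relative to its $3$-partite reduct" is not quite the right auxiliary statement: what must be controlled is not an arbitrary linear ordering of a $3$-partite structure but the specific family of $\Q_{3}$-structures arising from $\Cz^{*}$-expansions of a fixed $\m B\in\Age(\Cz)$ after the de-rotation transformation, and the passage between these two families is exactly where the rotations of the cut enter. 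So as written there is a genuine gap.

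The paper avoids this finite combinatorics entirely by verifying the \emph{infinitary} criterion of Proposition \ref{prop:EPcompact} instead: the expansion property is equivalent to $\Age(\Cz^{*})\subset\Age(\Cz,\vec S)$ for every $\vec S\in X^{*}$. By Proposition \ref{prop:age}, the elements of $X^{*}$ are exactly the partitions $(T_{0},T_{1},T_{2})$ of $\Cz$ into three parts each totally ordered by the arc relation; geometrically such a part has angular diameter at most $2\pi/3$, so each part is an arc of measure exactly $2\pi/3$ (with at most the endpoints in question belonging to $\Cz$). By density of the rational-argument points, every such $(\Cz,\vec S)$ has age equal to $\Age(\Cz^{*})$, which gives the criterion at once, and Theorem \ref{thm:EP} then yields minimality. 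In other words, the compactness bridge of Proposition \ref{prop:EPcompact} lets one check the expansion property on the single, highly symmetric infinite structure rather than constructing a robust finite $\m B$ for each $\m A$; you would do well to rework your argument along these lines, since the finite version you sketch is exactly the "technical heart" that this route renders unnecessary.
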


\begin{proof}

Using the same analysis as in the previous section, it is easy to see that the elements of $X^{*}$ are the partitions of $\Cz$ into three parts, each of them being totally ordered by the arc relation and therefore of angular diameter $2\pi/3$, with one of the extremity points possibly in $\Cz$. Therefore, all of them have an age equal to $\Age(\Cz ^{*})$. By Proposition \ref{prop:EPcompact}, it follows that the class $\Age(\Cz ^{*})$ has the expansion property relative to $\Age(\Cz)$, and by Theorem  \ref{thm:EP}, the flow $G\curvearrowright X^{*}$ is minimal.  \qedhere

\end{proof}

As a result, every element in $\Age(\Cz)$ has a finite Ramsey degree in $\Age(\Cz)$, equal to the number of its non-isomorphic expansions in $\Age(\Cz ^{*})$. In fact, it turns out that computing the exact value of this number is possible thanks to the same method as the one used in \cite{LNS} to compute Ramsey degrees in $\Age(\Cy)$. For a given $\m A \in \Age(\Cz)$, it is equal to $$3|\m{A}|/|\Aut(\m{A})|.$$

Another consequence is of course that $G\curvearrowright X^{*}$ is the universal minimal flow of $G$. As previously, a concrete realization of this action is available. The same kind of argument as in the previous section leads to the following description: recall that $S$ denotes the underlying set of $\Cy$ and $\Cz$, and that $r$ denotes the rotation about the origin with angle $(-2\pi/3)$. Let $E=\T\smallsetminus(S\cup r(S)\cup r^{-1}(S))$, and set $$\tilde{\T} = E\cup((S\cup r(S)\cup r^{-1}(S))\times[2])$$ 

As before, the right way to think about $\tilde \T$ is as $\T$, with certain points doubled. Let $q$ be the natural projection from $E$ onto $\T$, and define
$$ [\alpha, \beta]:= \{ (\alpha, 0)\} \cup \{ t\in \tilde \T: \alpha \la{\D}p(t)\la{\D}\beta \}\cup \{(\beta, 1)\}.$$

The set $X^{*}$ can be identified with $\tilde \T$ as follows. For $\vec T = (T_0,T_1, T_{2})\in X^{*}$, let $t = \sup T_0$. Define a map $\phi (\vec T)$ by:

\begin{enumerate}

\item If $t\in E$, $\phi (\vec T) = t$. 

\item If $t\in S$, $\phi (\vec T)$ equals $(t,0)$ if $t\in T_{0}$, and $(t,1)$ if $t\in T_{2}$. 

\item If $t\in r(S)$, $\phi (\vec T)$ equals $(t, 0)$ if $r^{-1}(t)\in T_{1}$, and $(t,1)$ if $r^{-1}(t)\in T_{0}$. 

\item If $t\in r^{-1}(S)$, $\phi (\vec T)$ equals $(t, 0)$ if $r(t)\in T_{2}$, and $(t,1)$ if $r(t)\in T_{1}$. 

\end{enumerate}

The map $\phi$ is a bijection, and it consequently allows $G$ to act on naturally on $\tilde \T$.  Then, as previously, one can prove that the sets of the form $[\alpha, \beta]$ are the images of the basic open sets in $X^{*}$. Thus, for the corresponding topology, the action $G\curvearrowright \tilde \T$ is continuous, and the map $\phi:\funct{X^{*}}{\tilde \T}$ is an isomorphism of $G$-flows. In other words:

\begin{prop}
The action $G\curvearrowright \tilde \T$ is the universal minimal flow of $\Aut(\Cz)$.

\end{prop}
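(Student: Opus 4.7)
The plan is to apply Theorem~\ref{thm:UMF} to deduce that $G\curvearrowright X^{*}$ is the universal minimal flow of $G$, and then to promote this to a concrete description on $\tilde{\T}$ via the map $\phi$. To apply Theorem~\ref{thm:UMF} I must verify its hypotheses for $\Cz^{*}$. Precompactness is immediate since each finite $\m{A}\in\Age(\Cz)$ admits at most $3^{|\m{A}|}$ expansions in $\Age(\Cz^{*})$; Fra\"iss\'e-ness transfers from the well-known class $\Age(\Q_{3})$ via the bi-definability established in the previous proposition; and rigidity of the elements of $\Age(\Cz^{*})$ holds because the three unary predicates single out the color classes, within each of which the arc relation of $\Cz$ restricts to a linear order, forcing any automorphism to be trivial. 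The Ramsey property was proved just above, and the expansion property relative to $\Age(\Cz)$ follows from the minimality of $G\curvearrowright X^{*}$ (already established) via Theorem~\ref{thm:EP}. Hence $G\curvearrowright X^{*}$ is the universal minimal flow of $G$.

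Next I would check that $\phi: X^{*}\to\tilde{\T}$ is a bijection. From the analysis of $X^{*}$ used in the minimality proof, every $\vec{T}=(T_{0},T_{1},T_{2})\in X^{*}$ is a partition of $\Cz$ into three arcs, each linearly ordered by $\la{\Cz}$ and each of angular diameter exactly $2\pi/3$, so $\vec{T}$ is determined by the single point $t=\sup T_{0}\in\T$ together with, in the boundary cases $t\in S\cup r(S)\cup r^{-1}(S)$, a binary choice specifying which of the two adjacent parts actually contains the relevant boundary point. The four sub-cases in the definition of $\phi$ make this correspondence explicit, and a routine case check shows $\phi$ is a bijection. $G$-equivariance is immediate since the logic action of $G$ on $X^{*}$ moves the partition data, and this translates tautologically to the natural action of $G$ on $\tilde{\T}$ by moving the encoded cut point.

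Finally I would show that the collection $\{[\alpha,\beta]\}$ is a basis for a topology on $\tilde{\T}$ whose pullback through $\phi$ coincides with the subspace topology induced on $X^{*}$ by $([2]^{\Cz})^{3}$. Given a basic neighborhood $U_{F}$ of $\vec{T}$ cut out by a finite $F\subset\Cz$, enlarge $F$ to $F\cup r(F)\cup r^{-1}(F)$; this refinement subdivides $\T$ into finitely many arcs, hence subdivides $\tilde{\T}$ into the corresponding canonical intervals, and the unique such interval $[\alpha,\beta]$ containing $\phi(\vec{T})$ is exactly $\phi(U_{F})$. The main technical obstacle here will be the bookkeeping at the doubled points: one has to verify, in each of the four sub-cases defining $\phi$, that the convention choosing second coordinate $0$ or $1$ in $\tilde{\T}$ agrees with the combinatorial data prescribed by $\vec{T}$ on any finite set containing the relevant preimage of $t$. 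Once this matching is confirmed, $\phi$ is a $G$-equivariant homeomorphism, and combined with the first step the proposition follows.
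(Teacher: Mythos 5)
Your proposal is correct and follows essentially the same route as the paper, which likewise deduces universality from Theorem~\ref{thm:UMF} (Ramsey property via the bi-definability with $\Q_{3}$, expansion property via the already-established minimality) and then transports the flow to $\tilde{\T}$ through $\phi$ and the $[\alpha,\beta]$ basis exactly as in the $\Cy$ case. Your explicit checks of rigidity and of the Fra\"iss\'e property for $\Cz^{*}$ fill in hypotheses the paper leaves implicit, but they do not change the argument.
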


\section{The relevance of precompact relational expansions}

\label{section:why}

The purpose of this section is to discuss the status of precompact relational expansions as the relevant framework for generalizing \cite{KPT}. Let us first explain why we only need to deal with \emph{relational} expansions (where $L^{*}\smallsetminus L$ consists only of relation symbols), as opposed to more general ones, where function symbols may be incorporated. The reason is that, given a Fra\"iss\'e structure $\m F$, if one is able to find a Fra\"iss\'e expansion $\m F^{*}$ such that $\Age(\m F)$ has the Ramsey property and consists of rigid elements, and such that the quotient $\Aut(\m F)/\Aut(\m F^{*})$ is precompact, then there is a Fra\"iss\'e relational expansion $\m F^{*, rel}$ with the same properties. Indeed, the group $G^{*}=\Aut(\m F^{*})$ is a closed subgroup of $\Aut(\m F)$, so there is a relational Fra\"iss\'e expansion $\m F^{*, rel}$ of $\m F$ such that $\Aut(\m F^{*,rel})=G^{*}$. Then, $\m F^{*, rel}$ is as required. Precompactness for $\Aut(\m F)/\Aut(\m F^{*, rel})$ is obvious. Next, the group $G^{*}$ is extremely amenable thanks to Theorem  \ref{thm:EARP}, and it follows, still thanks to Theorem \ref{thm:EARP}, that $\Age(\m F^{*,rel})$ consists of rigid elements and has the Ramsey property. Working with relational expansions is therefore enough. This is quite fortunate because in the general case, many notions from \cite{KPT} do not transfer as easily as they do in the relational setting. Next, as for the role of the precompactness assumption, it is simply because both Theorems \ref{thm:EP} and \ref{thm:UMF} fail without it. On the other hand, in view of the smooth transfer from Theorems \ref{thm:OP} and \ref{thm:KPTUMF} to Theorems \ref{thm:EP} and \ref{thm:UMF}, and of the link it allows with the right uniform structure on the automorphism group $G$ and its quotient $G/G^{*}$, precompactness appears more naturally than, say, finite relational expansions. From the practical point of view, this intuition materializes with the following example: consider the language made of a binary relation symbol $<$ together with countably many relational symbols $E_{n}$, $n\geq 1$, with arity $2n$. Consider $\mathcal K$ the class of all finite structures in that language, where $<$ is interpreted as a linear order, and each $E_{n}$ is interpreted as an equivalence relation with at most two classes on the set of $n$-uples. Then $\mathcal K$ is the age of a Fra\"iss\'e structure $\m F$, and its universal minimal flow can be computed thanks to the expansion $\m F^{*}$ obtained from $\m F$ by adding a unary relation symbol for each equivalence class in $\m F$. In other words, precompact relational expansions, as opposed to finite ones, are sometimes really necessary. However, it could still be that finite relational expansions do suffice when the structure $\m F$ we start with has a finite language. Future research will probably help deciding whether this is a general phenomenon or not. Note that, as mentioned previously, our technique also allows to compute the universal minimal flow for all the groups coming from countable ultrahomogeneous graphs, tournaments and posets. For graphs, all the results are essentially obtained in \cite{KPT}. The case of the disjoint union of countably many disjoint copies of a fixed complete graph $K_{n}$ is not explicitly treated there, but, as observed by Soki\'c in \cite{So2} in a slightly different context, it can be obtained thanks to a simple description of the automorphism group. For tournaments, \cite{KPT} covers the case of $(\Q,<)$ and of the universal tournament, and we presented in Section \ref{section:Cy} the case of the dense local order. For posets, all the results are included in \cite{So1} and \cite{So2}. Furthermore, some recent work in collaboration with Jakub Jasi\'nski, Claude Laflamme and Robert Woodrow suggests that finite relational expansions are indeed sufficient in the case of all countable ultrahomogeneous directed graphs, whose classification was made available by Cherlin in \cite{Ch}. Finally, note that in fact, for a closed subgroup $G$ of $S_{\infty}$, $M(G)$ can always be computed thanks to Theorem \ref{thm:UMF}, provided $M(G)$ is metrizable and has a $G_{\delta}$ orbit. This result will appear in a subsequent paper in collaboration with Todor Tsankov.

%Let us now finish with a few chronological details. Even though \cite{KPT} already contained examples where pure order expansions do not directly lead to the universal minimal flow, a little bit of extra work always allowed to reach it easily. For that reason, the need to generalize the techniques did not appear there, but only in 2008, when partition properties of $\Cy$ were analyzed (cf \cite{LNS}), and when it became clear that a slight modification of \cite{KPT} would suffice to compute the corresponding universal minimal flow. As for all the previous cases, the modification was carried out ``by hand'', and the results announced in Urbana-Champaign, April 2008. However, it was already felt at that time that a true generalization of \cite{KPT}, as opposed to a case by case analysis, would be needed at some point (an intuition which turned out to be confirmed by the works \cite{So2} of Soki\'c, and \cite{J} of Jasi\'nski). The case of finite relational expansions (where $L$ is enriched with finitely many relation symbols) was then carried out and announced (Vienna, June 2009), but remained unpublished, mostly because of the belief that it may not correspond to the right setting. It is only after subsequent work that precompact relational expansions appeared. For the reasons exposed above, we feel that they do provide the right setting. Future research will probably help deciding whether it is really the case or not. 

\bibliographystyle{amsalpha}
\bibliography{bib}

%\begin{thebibliography}{NVT}

%\end{thebibliography}

\end{document}